\theoremstyle{theorem}
\newtheorem{theorem}{Theorem}[section]%to acne stevilciti vsakic ko je nov section
\newtheorem{lemma}[theorem]{Lemma}
\newtheorem{corollary}[theorem]{Corollary}
\newtheorem{proposition}[theorem]{Proposition}
\theoremstyle{definition}
\newtheorem{definition}{Definition}
\newtheorem{remark}{Remark}
\newtheorem{example}{Example}
\renewcommand{\AA}{\mathbb A}
\newcommand{\LL}{\mathbb L}
\newcommand{\RR}{\mathbb R}
\newcommand{\NN}{\mathbb N}
\newcommand{\ZZ}{\mathbb Z}
\newcommand{\QQ}{\mathbb Q}
\newcommand{\PP}{\mathbb P}
\newcommand{\spec}{\operatorname{Spec}}
\newcommand{\sgn}{\operatorname{sgn}}
\newcommand{\HH}{\operatorname{HH}}
\newcommand{\lan}{\langle}
\newcommand{\ran}{\rangle}
\newcommand{\D}{\operatorname{D}}
\newcommand{\lam}{\lambda}
\newcommand{\coker}{\operatorname{Coker}}
\newcommand{\cone}{\operatorname{Cone}}
\newcommand{\rk}{\operatorname{rk}}
\newcommand{\pic}{\operatorname{Pic}}
\newcommand{\har}{\operatorname{Har}}
\newcommand{\e}{\operatorname{Ext}}
\newcommand{\h}{\operatorname{Hom}}
\newcommand{\tot}{\operatorname{tot}}
\renewcommand{\sp}{\operatorname{Span}}
\newcommand{\cA}{\mathcal{A}}
\newcommand{\cO}{\mathcal{O}}
\newcommand{\cS}{\mathcal{S}}
\newcommand{\ffi}{\varphi}
\newcommand{\kss}{\scriptscriptstyle}
\newcommand{\kbb}{{\kss \bullet}}
\begin{document}

\title[Hochschild cohomology and deformation quantization]{Hochschild cohomology and deformation quantization of affine toric varieties}
\author{Matej Filip}
\address{Institut f\"ur Mathematik, Freie Universit\"at Berlin, Berlin, Germany}
\email{filip@math.fu-berlin.de}

\begin{abstract}

For an affine toric variety $\spec(A)$, we give a convex geometric description of the Hodge decomposition of its Hochschild cohomology. Under certain assumptions we compute the dimensions of the Hodge summands $T^1_{(i)}(A)$, generalizing the existing results about the Andr\'e-Quillen cohomology group $T^1_{(1)}(A)$. We prove that every Poisson structure on a possibly singular affine toric variety can be quantized in the sense of deformation quantization.  

\end{abstract}

\keywords{Deformation quantization; Hochschild cohomology; Toric singularities}

\subjclass[2010]{13D03, 13D10, 14B05, 14B07, 14M25, 53D55}

\maketitle

\section{Introduction}
The concept of deformation quantization has been appearing in the literature for many years and was established by Bayen, Flato, Fr\o nsdal, Lichnerowicz and Sternheimer in \cite{velik}. 
A major result, concerning the existence of deformation quantization is Kontsevich's formality theorem \cite[Theorem 4.6.2]{kon} which implies that every Poisson structure on a real manifold can be quantized, i.e., admits a star product. 
Kontsevich also extended the notion of deformation quantization into the algebro-geometric setting \cite{kon2}. From Yekutieli's results \cite{yek}, \cite{yek2} it follows that on a 
smooth algebraic variety $X$ (under certain cohomological restrictions)  
every Poisson structure admits a star product. As in Kontsevich's case, the construction is canonical and induces a bijection between the set of formal Poisson structures modulo gauge equivalence and the set of star products modulo gauge equivalence (see also Van den Bergh's paper \cite{vdb}).

When $X=\spec(A)$ is a smooth affine variety, we have the following formality theorem: there exists an $L_{\infty}$-quasi-isomorphism between the Hochschild differential graded Lie algebra  $C^{\kbb}(A)[1]$ and the formal differential graded Lie algebra $H^{\kbb}(A)[1]$ (i.e., the graded Lie algebra $H^{\kbb}(A)[1]$ with trivial differential), extending the Hochschild-Kostant-Rosenberg quasi-isomorphism of the above complexes. Dolgushev, Tamarkin and Tsygan \cite{dol-tam-tsy} proved even a stronger statement by showing that the Hochschild complex $C^{\kbb}(A)$ is formal as a homotopy Gerstenhaber algebra. Consequently, every Poisson structure on a smooth affine variety can be quantized.

Studying non-commutative deformations (also called quantizations) of toric varieties is important for constructing and enumerating noncommutative instantons (see \cite{cir1}, \cite{cir2}), which is closely related to the computation of Donaldson-Thomas invariants on toric threefolds (see \cite{iqb}, \cite{cir}).

In the paper we drop the smoothness assumption and consider the deformation quantization problem for possibly singular affine toric varieties.
In the singular case the Hochschild-Konstant-Rosenberg map is no longer a quasi-isomorphism and thus also the $n$-th Hochschild cohomology group is no longer isomorphic to the Hodge summand $H^n_{(n)}(A)\cong \h_A(\Omega^n_{A|k},A)$. Therefore, other components of the Hodge decomposition come into play, making the problem of deformation quantization interesting from the cohomological point of view.  
In general many parts of the Hodge decomposition are still unknown. The case of complete intersections has been settled in \cite{fro-kon}, where Fr\o nsdal and Kontsevich also motivated the problem of deformation quantization on singular varieties. 
In the toric case Altmann and Sletsj\o e \cite{klaus} computed the Harrison parts of the Hodge decomposition.

Deformation quantization of singular Poisson algebras does not exist in general; see Mathieu \cite{mat}  for counterexamples.
For known results about quantizing singular Poisson algebras we refer the reader to  
\cite{sch} and references therein. 
The associative deformation theory for complex analytic spaces was developed by Palamodov in \cite{pal2} and \cite{pal3}. For recent developments concerning the problem of deformation quantization in derived geometry, see \cite{cal-pan}.
%We refer the reader to \cite{dit-ste} for application of deformation quantization in physics.

The paper is organized as follows: in Section 2 and 3 we recall definitions and some techniques for computing  Hochschild cohomology. We compute the Hochschild cohomology of a reduced isolated hypersurface singularity in Proposition \ref{prop red iso}.  Section 4 contains computations of Hochschild cohomology for toric varieties. In Theorem \ref{th import} we give a convex geometric description of the Hodge decomposition of the Hochschild cohomology for affine toric varieties. As an application we explicitly calculate $T^1_{(i)}(A)$ for all $i\in \NN$ in the case of two and  three dimensional affine toric varieties (see Propositions \ref{cor t2}, \ref{t1i}). In higher dimensions we compute $T^1_{(i)}(A)$ for affine cones over smooth toric Fano varieties (see Theorem \ref{th fano}). In Section 5 we prove that every Poisson structure on an affine toric variety can be quantized in the sense of deformation quantization. %The proof is summarised in Theorem \ref{th def quan}.  

\section{Preliminaries}
Let $k$ be a field of characteristic 0 (in Section 5 we assume additionally that $k$ is algebraically closed) and let $A$ be an associative commutative $k$-algebra. We denote by $\cA$ the category of local Artin  $k$-algebras with the residue field $k$ (with local homomorphisms as morphisms) and by $\cS$ we denote the category of sets.

We consider the following deformation problem: a deformation of $A$ over an Artin ring $B$ is a pair $(A',\pi)$, where $A'$ is a $B$-algebra and $\pi:A'\otimes_Bk\to A$ is an isomorphism of $k$-algebras. Two such deformations $(A',\pi_1)$ and $(A'',\pi_2)$ are \emph{equivalent} if there exists an isomorphism of $B$-algebras $\phi:A'\to A''$ such that it is compatible with $\pi_1$ and $\pi_2$, i.e., such that $\pi_1=\pi_2\circ (\phi\otimes_B k)$.
A functor that encodes this deformation problem is
$$\text{Def}_A:\cA\to \cS$$
$$B\mapsto \{\text{deformations of }A \text{ over }B\}/\sim.$$
It is well-known that the differential graded Lie algebra (dgla for short) that controls this deformation problem is the Hochschild dgla $C^{\kbb}(A)[1]$, where $C^{\kbb}(A)$ is the Hochschild cochain complex, i.e., $C^n(A)$ is the space of $k$-linear maps $f:A^{\otimes n}\to A$ (or $A$-module homomorphisms $A\otimes A^{\otimes n}\to A$) with the differential given by 
$$
\begin{array}{ll}
(df)(a_1\otimes \cdots \otimes a_n):=&a_1f(a_2\otimes \cdots \otimes a_n)+\\
&\sum^{n-1}_{i=1}(-1)^{i}f(a_1\otimes \cdots \otimes a_ia_{i+1}\otimes \cdots \otimes a_n)+\\
&(-1)^nf(a_1\otimes \cdots \otimes a_{n-1})a_n.
\end{array}
$$
The $n$-th cohomology groups of this complex is called the $n$-th \emph{Hochschild cohomology group}, denoted by $\HH^n(A)$.
The Lie bracket on $C^{\kbb}(A)[1]$ is coming from the \emph{Gerstenhaber bracket} $[f,g]$ of $f\in C^m(A)$, $g\in C^n(A)$, which is defined as 
$$[f,g]:=f\circ g-(-1)^{(m+1)(n+1)}g\circ f\in C^{m+n-1}(A),$$
where 
$$(f\circ g)(a_1\otimes \cdots \otimes a_{m+n-1}):=$$
$$\sum_{i=1}^m(-1)^{(i-1)(n+1)}f(a_1\otimes \cdots \otimes a_{i-1}\otimes g(a_i\otimes \cdots \otimes a_{i+n-1})\otimes a_{i+n}\otimes \cdots \otimes a_{m+n-1}).$$
The Gerstenhaber bracket equips $C^{\kbb}(A)[1]$ with the structure of a dgla.
 
Gerstenhaber and Schack described the Hodge decomposition of the Hochschild (co-)homology that we will briefly recall (see \cite{ger-sch} for more details). 
In the group ring of the permutation group $S_n$ one defines the shuffle $s_{i,n-i}$ to be $\sum (\sgn \pi)\pi$, where the sum is taken over those permutations $\pi\in S_n$ such that $\pi (1) <\pi (2)<\cdots<\pi (i)$ and $\pi(i+1)<\pi(i+2)<\cdots <\pi(n)$. 
Let $s_n=\sum_{i=1}^{n-1}s_{i,n-i}$.
There exists orthogonal idempotents $e_n(i)\in S_n[\QQ]$ for $i=1,...,n$, whose sum is the unit element. Moreover, for $\lam_i=2^i-2$ it holds that $$s_n=\lam_1e_n(1)+\lam_2e_n(2)+\cdots +\lam_ne_n(n),$$ which gives subcomplexes $C^{\kbb}_{(i)}(A)$, with $C^n_{(i)}(A)=\{f\in C^n(A)~|~f\circ s_n=(2^{i}-2)f\}$.
We have
$$\HH^n(A)\cong H^n_{(1)}(A)\oplus \cdots \oplus H^n_{(n)}(A),$$
where $H^n_{(i)}(A)$  is the $n$-th cohomology of $C^{\kbb}_{(i)}(A)$ (the part of $\HH^n(A)$ corresponding to $e_n(i)$). 

It holds that $H_n^{(n)}(A)\cong \Omega_{A|k}^n$, the $n$-th exterior power of the module of Kähler differentials. If $A$ is smooth, we have $\HH^n(A)\cong H_{(n)}^{n}(A)\cong \h_A(\Omega_{A|k}^n,A)$.

\begin{definition}
The complex $C^{\kbb}_{(1)}(A)$ is called the \emph{Harrison complex} and we will write $\har^{n}(A):=H^n_{(1)}(A)$ for the Harrison cohomology groups.
\end{definition}

\begin{definition}
A skew-symmetric Hochschild $2$-cocycle $p$ that satisfies the Jacobi identity
$$p(a\otimes p(b\otimes c))+p(b\otimes p(c\otimes a))+p(c\otimes p(a\otimes b))=0$$
is called an (algebraic) \emph{Poisson structure} (or a \emph{Poisson bracket}). A commutative algebra
together with a Poisson bracket that also satisfies Leibniz's law is called a \emph{Poisson algebra}. Its spectrum is called an \emph{affine Poisson variety}.
\end{definition}

Using the Hodge decomposition we can equivalently define the Poisson structure as an element $p\in H^2_{(2)}(A)$ with $e_3(3)[p,p]=0$, where $e_3(3)$ is the orthogonal idempotent projecting $C^3(A)$ on $C^3_{(3)}(A)$ 
(see e.g.\ \cite{pal2}).

\begin{definition}
\emph{A one-parameter formal deformation} of $A$ is an associative algebra $(A[[\hslash]],*)$, such that 
$$a*b=ab (\bmod \hslash),$$
for each $a,b\in A$.
We require that $*$ is associative, $k[[\hslash]]$-bilinear and continuous, which means that 
$$\big(\sum_{m\geq 0}b_m\hslash^m\big)*\big(\sum_{n\geq 0}c_n\hslash^n\big)=\sum_{m,n\geq 0}(b_m*c_n)\hslash^{m+n}.$$
\end{definition}

\begin{definition}\label{def quant}
We say that a Poisson structure $p\in H^2_{(2)}(A)$ can be quantized if there exist $\gamma_2$, $\gamma_3$,... in $C^2(A)$, such that 
$$a*b:=ab+\frac{1}{2}p(a\otimes b)\hslash+\gamma_2(a\otimes b)\hslash^2+\gamma_3(a\otimes b)\hslash^3+\cdots $$
is a one-parameter formal deformation.
\end{definition}

Note that when $\text{Har}^3(A)=0$, every Poisson structure can be extended to a second order deformation (i.e.\ $\gamma_2$ always exists (mod $\hslash^3$) since $e_3(3)[p,p]=e_2(3)[p,p]=0$). 

Now we recall the standard notation in the toric setting from \cite{klaus}.
Let $M,N$ be mutually dual, finitely generated, free Abelian groups. We denote by $M_{\RR}$, $N_{\RR}$ the associated real vector spaces obtained via base change with $\RR$. Given a rational, polyhedral cone $\sigma=\lan a_1,...,a_N \ran\subset N_{\RR}$ with apex in $0$ and with $a_1,...,a_N\in N$ denoting its primitive fundamental generators (i.e.\ none of the $a_j$ is a proper multiple of an element of $N$). We define the dual cone $\sigma^{\vee}:=\{r\in M_{\RR}~|~\lan \sigma,r \ran\geq 0\}\subset M_{\RR}$ and denote by $\Lambda:=\sigma^{\vee}\cap M$ the resulting semi-group of lattice points. Its spectrum $\spec(k[\Lambda])$ is called an \emph{affine toric variety}. For $\lam\in \Lambda$ we denote by $x^{\lam}$ the monomial corresponding to $\lam$. Since $\Lambda$ is saturated, 
$\spec(k[\Lambda])$ is normal (see e.g.\ \cite[Theorem 1.3.5]{cox}).
\begin{definition}
A variety $X$ is called \emph{$\QQ$-Gorenstein} if the double dual of some tensor product of $\omega_X$ is an invertible sheaf on $X$.
\end{definition}

The following facts about toric $\QQ$-Gorenstein varieties can be found in \cite[Section 6.1]{alt4}. 
For an affine toric variety given by the cone $\sigma=\lan a_1,...,a_N\ran$ we have that $X$ is $\QQ$-Gorenstein if and only if there exists a primitive element $R^*\in M$ and a natural number $g\in \NN$ such that $\lan a_j,R^*\ran=g$ for each $j=1,...,N$. $X$ is Gorenstein if and only if additionally  $g=1$. In particular, toric $\QQ$-Gorenstein singularities are obtained by putting a lattice polytope $P\subset \AA$ into the affine hyperplane $\AA\times \{g\}\subset N_{\RR}:=\AA\times \RR$ and defining $\sigma:=\text{Cone}(P)$, the cone over $P$. Then the canonical degree $R^*$ equals $(\underline{0},1)$.

\section{Andr\'e-Quillen cohomology}
In this section we recall the geometric approach (using the cotangent complex) for computing the Hochschild cohomology. As an application we compute the Hochschild cohomology of a reduced isolated hypersurface singularity, which will give a more complete view on the results that we will obtain in the next section (see Example \ref{ex 3}).

We will briefly recall the construction of the cotangent complex (for more details see \cite{man}).

\begin{definition}
A dg-algebra $R$ with differential $s$ is called \emph{semifree} if:
\begin{itemize}
\item The underlying graded algebra is a polynomial algebra $k[x_i~|~i\in I]$, where the degree of $x_i$ may vary.
\item There exists a filtration $$\emptyset=I(0)\subset I(1)\subset \cdots , \cup_{n\in \NN}I(n)=I,$$ such that $s(x_i)\in k[x_j~|~j\in I(n)]$ for every $i\in I(n+1)$.
\end{itemize}
\end{definition}
A $k$-\emph{semifree resolution} of an algebra $A$ is a surjective quasi-isomorphism $R\to A$, where $R$ is a semifree $k$-dg-algebra.
Note that a $k$-semifree resolution always exists. The corresponding complex of the $A$-dg module $\Omega_{R|k}\otimes_RA$ 
gives us the element $\LL_{A|k}$ in the derived category $\D(\mathfrak{Mod}_A)$. We call $\LL_{A|k}$ \emph{the cotangent complex}. It is independent of the choice of the $k$-semifree resolution.

We have a quasi-isomorphism between $(\LL_{A|k})[1]$ and $C_{\kbb}^{(1)}(A)$ (see e.g.\ \cite[Proposition 4.5.13]{lod}). Moreover, the \emph{derived exterior powers} $\wedge^i\LL_{A|k}$ (see \cite[Section 3.5.4]{lod} for definitions) give us the following proposition.
\begin{proposition}\label{hodge iso}
There exists a quasi-isomorphism between $\wedge^i(\LL_{A|k})[i]$ and $C_{\kbb}^{(i)}(A)$.
\end{proposition}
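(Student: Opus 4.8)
The plan is to reduce the statement to the "dg‑smooth" situation by means of a semifree resolution, where it becomes a Hochschild–Kostant–Rosenberg computation that is compatible with the $\lambda$‑decomposition. First I would fix a $k$‑semifree resolution $\epsilon\colon R\to A$ and observe that, because $R$ is semifree, Hochschild homology is homotopy invariant along $\epsilon$: filtering the Hochschild complex by its simplicial degree and using that $\epsilon^{\otimes m}\colon R^{\otimes m}\to A^{\otimes m}$ is a quasi‑isomorphism over the field $k$, one gets that the natural $A$‑linear maps $C_\bullet(R)\to C_\bullet(A)$ and $C_\bullet(R;A)\to C_\bullet(A)$ are quasi‑isomorphisms, where $C_\bullet(R;A)$ is the Hochschild complex of $R$ with coefficients in $A$ via $\epsilon$. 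Since the idempotents $e_n(i)$ act through natural transformations (permuting tensor factors), these quasi‑isomorphisms respect the Hodge decomposition, so $C_\bullet^{(i)}(A)\simeq C_\bullet^{(i)}(R)\simeq C_\bullet^{(i)}(R;A)$.

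Next I would invoke the dg version of HKR: the underlying graded algebra of $R$ is a (graded‑)polynomial algebra, so the antisymmetrization map exhibits $C_\bullet(R;A)$, up to quasi‑isomorphism, as $\bigoplus_i\bigl(\Omega^i_{R|k}\otimes_R A\bigr)[i]$, each summand carrying only the differential induced by the differential of $R$. Crucially, antisymmetrization intertwines the $\lambda$‑operations, so in Hodge weight $i$ it restricts to a quasi‑isomorphism $\bigl(\Omega^i_{R|k}\otimes_R A\bigr)[i]\xrightarrow{\ \sim\ }C_\bullet^{(i)}(R;A)$. On the level of the underlying graded polynomial algebra this is the classical statement recalled in \cite[Section 4.5]{lod}; to pass to $R$ itself one runs the spectral sequence of the filtration by internal degree, whose convergence is guaranteed precisely by semifreeness of $R$.

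Finally I would match the target. By the very definition of $\LL_{A|k}=\Omega^1_{R|k}\otimes_R A$ and of its derived exterior powers (\cite[Section 3.5.4]{lod}), $\wedge^i(\LL_{A|k})$ is computed by $\wedge^i_R(\Omega^1_{R|k})\otimes_R A=\Omega^i_{R|k}\otimes_R A$, the exterior power being legitimately taken on the free dg $R$‑module $\Omega^1_{R|k}$. Concatenating the quasi‑isomorphisms above gives $C_\bullet^{(i)}(A)\simeq\wedge^i(\LL_{A|k})[i]$; for $i=1$ this is exactly the Harrison/cotangent‑complex quasi‑isomorphism recalled before the proposition.

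I expect the main obstacle to be the dg‑HKR statement together with its compatibility with the Hodge decomposition, i.e.\ checking that antisymmetrization is a $\lambda$‑equivariant quasi‑isomorphism at the chain level for a semifree dg algebra and that the internal‑degree spectral sequence converges; everything else is formal bookkeeping. An equivalent route, avoiding dg subtleties, replaces $R$ by a simplicial polynomial resolution $P_\bullet\to A$, applies the ordinary smooth HKR together with the $\lambda$‑decomposition level‑wise (which is legitimate since every construction involved is natural in the polynomial algebra), and then uses the Eilenberg–Zilber theorem to identify $\diag$ of $[n]\mapsto\bigl(\Omega^i_{P_n|k}\otimes_{P_n}A\bigr)[i]$ with $\wedge^i(\LL_{A|k})[i]$.
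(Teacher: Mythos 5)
Your argument is correct and is essentially the paper's approach: the paper proves this proposition simply by citing \cite[Proposition 4.5.13]{lod}, and the proof of that cited result (due to Quillen, as presented by Loday) is exactly the simplicial-polynomial-resolution route you sketch at the end --- level-wise HKR compatible with the Eulerian idempotents, plus Eilenberg--Zilber --- of which your dg-semifree version is an equivalent reformulation. So you have supplied, correctly, the details that the paper outsources to the reference.
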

\begin{proof}
See \cite[Proposition 4.5.13]{lod}.
\end{proof}
\begin{example}\label{ex hype}
Let $X=\spec(A)$ be a reduced hypersurface, where 
$$A=k[x_1,...,x_N]/(f(x_1,...,x_N)).$$ 
Let us denote $S=k[x_1,...,x_N]$.
The $i$-th derived exterior power $\wedge^i\LL_{A|k}$ is isomorphic to the complex
\begin{equation}\label{wedge iA}
0\to A\xrightarrow{\wedge df}\Omega^1_{S|k}\otimes_SA\xrightarrow{\wedge df}\cdots \xrightarrow{\wedge df}\Omega^i_{S|k}\otimes_SA\to 0,
\end{equation}
where $\Omega^i_{S|k}\otimes_SA$ is the degree $0$ term. We can prove \eqref{wedge iA} by first computing the cotangent complex and since it has only two non-zero terms, we can use \cite[Chapter 4]{sai} (see also \cite{ill}) to compute the derived exterior powers.
\end{example}

\begin{definition}
The $n$-th homology group of $\wedge^i\LL_{A|k}$ is called the $n$-th (higher) \emph{Andr\'e-Quillen homology} group and denoted by $T^{(i)}_n(A)$. The $n$-th cohomology group of $\h_A(\wedge^i\LL_{A|k},A)$ is called the $n$-th (higher) \emph{Andr\'e-Quillen cohomology} group and denoted by $T^n_{(i)}(A)$. 
\end{definition}

In particular, from Proposition \ref{hodge iso} we have an isomorphism of groups 
$$T^{n-1}_{(1)}(A)\cong H^n_{(1)}(A) = \har^n(A),$$
or more generally
$T^{n-i}_{(i)}(A)\cong H^n_{(i)}(A),$
for each $i=1,...,n$. For a smooth algebra $A$ we have 
$$\HH^n(A)\cong H^n_{(n)}(A)\cong T^0_{(n)}(A)$$
and thus we see that for $j>0$ the modules $T^j_{(i)}(A)$ (and similarly for $T_j^{(i)}(A)$) have support on the singular locus.

The next result relates Andr\'e-Quillen cohomology groups with $\e$ groups.

\begin{lemma}\label{lem kun}
Let $X=\spec(A)$ be smooth in codimension $d$. For each $i\geq 1$ and $0\leq j\leq d+1$, we have
$T^j_{(i)}(A)\cong \e_A^j(\Omega^i_{A|k},A)$.
\end{lemma}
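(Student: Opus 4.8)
The plan is to compute $T^{\bullet}_{(i)}(A)$ with a hyper-Ext spectral sequence and to isolate the $\Omega^i_{A|k}$-contribution in low cohomological degrees. First I would fix a $k$-semifree resolution of $A$; then $\wedge^i\LL_{A|k}$ is represented by a complex $P_{\bullet}$ of free $A$-modules concentrated in non-negative homological degrees, with $H_q(P_{\bullet})=T^{(i)}_q(A)$, so in particular $T^{(i)}_0(A)=\wedge^i\Omega^1_{A|k}=\Omega^i_{A|k}$ and $T^j_{(i)}(A)=H^j(\h_A(P_{\bullet},A))$. Filtering $P_{\bullet}$ by its homology then produces a first-quadrant spectral sequence
$$E_2^{p,q}=\e^p_A\big(T^{(i)}_q(A),A\big)\ \Longrightarrow\ T^{p+q}_{(i)}(A),$$
whose bottom row $q=0$ is exactly $\e^p_A(\Omega^i_{A|k},A)$, the group we want to match.

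The next step is to show the higher rows vanish in the relevant range. Because $\LL_{A|k}$ commutes with localization and $k$ is perfect (characteristic $0$), at a regular prime $\mathfrak p$ the ring $A_{\mathfrak p}$ is smooth over $k$, so $(\LL_{A|k})_{\mathfrak p}$ is concentrated in degree $0$ and equals the free module $\Omega^1_{A_{\mathfrak p}|k}$; hence $(\wedge^i\LL_{A|k})_{\mathfrak p}\simeq\Omega^i_{A_{\mathfrak p}|k}$ in degree $0$ and $T^{(i)}_q(A)_{\mathfrak p}=0$ for $q\geq1$. Thus each $T^{(i)}_q(A)$ with $q\geq1$ is supported on the singular locus $Z$, which by hypothesis has codimension $\geq d+1$. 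Since $A$ is Cohen--Macaulay, the grade of the ideal of $Z$ equals $\operatorname{codim}Z\geq d+1$, and therefore $\e^p_A\big(T^{(i)}_q(A),A\big)=0$ for all $q\geq1$ and all $p\leq d$.

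It then remains to do the bookkeeping on antidiagonals. Fix $n$ with $0\leq n\leq d+1$. On the line $p+q=n$ the only entry of $E_2$ that can be nonzero is $E_2^{n,0}=\e^n_A(\Omega^i_{A|k},A)$, since a term with $q\geq1$ has $p=n-q\leq d$ and so vanishes by the previous step. Moreover no differential touches $E_r^{n,0}$: the outgoing $d_r$ lands in a row with negative $q$ (which is zero), and the incoming $d_r$ comes from $E_r^{n-r,r-1}$, a subquotient of $\e^{n-r}_A(T^{(i)}_{r-1}(A),A)=0$ for $r\geq2$ (here $r-1\geq1$ and $n-r\leq d-1$). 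Hence $T^n_{(i)}(A)\cong E_\infty^{n,0}=\e^n_A(\Omega^i_{A|k},A)$ for $0\leq n\leq d+1$, which is the assertion. (Equivalently, one may argue from the distinguished triangle $\tau_{\geq1}(\wedge^i\LL_{A|k})\to\wedge^i\LL_{A|k}\to\Omega^i_{A|k}\to{}$, apply $\h_A(-,A)$, and check that the first term contributes nothing to cohomology in degrees $\leq d+1$.)

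The step I expect to be the real obstacle is the Ext-vanishing $\e^p_A(T^{(i)}_q(A),A)=0$ for $q\geq1$ and $p\leq d$. Everything rests on a grade/depth estimate for modules supported on the singular locus (of codimension $\geq d+1$), and this is precisely where the Cohen--Macaulay property of $A$ enters---without some such hypothesis the statement fails, so one really does need $A$ to be, say, Cohen--Macaulay (which holds for normal affine toric rings, the case of interest here). The remaining ingredients---identifying $T^{(i)}_0(A)$ with $\Omega^i_{A|k}$, the compatibility of the cotangent complex with localization, and running the first-quadrant spectral sequence---are routine.
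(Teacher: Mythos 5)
Your argument is essentially identical to the paper's: the same K\"unneth spectral sequence $E_2^{p,q}=\e_A^p(T_q^{(i)}(A),A)\Rightarrow T^{p+q}_{(i)}(A)$ together with the observation that $T_q^{(i)}(A)$ is supported on the singular locus for $q\geq 1$, hence the relevant $\e$ groups vanish for $p\leq d$. Your added remark that the grade estimate needs $A$ to be Cohen--Macaulay (automatic for the toric and hypersurface rings actually used here) makes explicit a hypothesis the paper's one-line vanishing claim leaves implicit, but the route is the same.
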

\begin{proof}
Since each term of $\wedge^{i}\LL_{A|k}$ is a projective $A$-module for each $i\geq 1$, 
we have a Künneth spectral sequence:
 $$E_2^{p,q}=\e_A^p(T_q^{(i)}(A),A)\Rightarrow T^{p+q}_{(i)}(A).$$
 
The modules $T_q^{(i)}(A)$ have support on the singular locus for $q\geq 1$. Since $A$ is smooth in codimension $d$, we have $\e_A^p(T_q^{(i)}(A),A)=0$ for $q\geq 1$ and $p=0,1,...,d$.
\end{proof}

\begin{proposition}\label{prop red iso}
Let $A$ be a reduced isolated hypersurface singularity in $\AA^N$. 
We have
$$
\HH^n(A)\cong \left\{
\begin{array}{ll}
\h_A(\Omega_{A|k}^n,A)\oplus A/(\frac{\partial f}{\partial x_1},\frac{\partial f}{\partial x_2},...,\frac{\partial f}{\partial x_N})& \text{ if }n<N\\ 
A/(\frac{\partial f}{\partial x_1},\frac{\partial f}{\partial x_2},...,\frac{\partial f}{\partial x_N}) & \text{ if }n\geq N
\end{array}
\right.
$$
\end{proposition}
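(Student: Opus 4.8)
The plan is to combine the Hodge decomposition with the explicit model of $\wedge^{\bullet}\LL_{A|k}$ for a hypersurface given in Example~\ref{ex hype}. First I would write $\HH^n(A)\cong\bigoplus_{i=1}^n H^n_{(i)}(A)\cong\bigoplus_{i=1}^n T^{n-i}_{(i)}(A)$ by Proposition~\ref{hodge iso} and peel off the $i=n$ summand: since $\wedge^n\LL_{A|k}$ has $H_0=\Omega^n_{A|k}$, left exactness of $\h_A(-,A)$ gives $T^0_{(n)}(A)=H^0(\h_A(\wedge^n\LL_{A|k},A))\cong\h_A(\Omega^n_{A|k},A)$. This summand vanishes for $n\ge N$: from the presentation of $\Omega^1_{A|k}$ as the cokernel of $A\to A^N$, $1\mapsto(\tfrac{\partial f}{\partial x_1},\dots,\tfrac{\partial f}{\partial x_N})$, one sees $\Omega^n_{A|k}=\wedge^n\Omega^1_{A|k}=0$ for $n>N$, and $\Omega^N_{A|k}\cong A/(\tfrac{\partial f}{\partial x_1},\dots,\tfrac{\partial f}{\partial x_N})$, which has finite length because the singularity is isolated; a finite length module admits no nonzero homomorphism into the Cohen--Macaulay ring $A$, which has positive depth.

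The core is to compute $T^{n-i}_{(i)}(A)$ for $1\le i\le n-1$ from \eqref{wedge iA}. Applying $\h_A(-,A)$ to that complex of finite free modules and using the self-duality of the differential $\wedge df$ — equivalently the perfect pairing $\Lambda^k A^N\times\Lambda^{N-k}A^N\to\Lambda^N A^N\cong A$ — one identifies $\h_A(\wedge^i\LL_{A|k},A)$, up to an overall shift, with the truncation
$$
\Lambda^{N-i}A^N\xrightarrow{\;\wedge df\;}\Lambda^{N-i+1}A^N\xrightarrow{\;\wedge df\;}\cdots\xrightarrow{\;\wedge df\;}\Lambda^N A^N
$$
(with $\Lambda^{N-i}A^N$ in degree $0$) of the Koszul cochain complex $K^{\bullet}$ on the partial derivatives over $A$. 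Hence $T^i_{(i)}(A)=\Coker(\Lambda^{N-1}A^N\xrightarrow{\wedge df}\Lambda^N A^N)\cong A/(\tfrac{\partial f}{\partial x_1},\dots,\tfrac{\partial f}{\partial x_N})$, while $T^j_{(i)}(A)\cong H^{N-i+j}(K^{\bullet})$ for $1\le j\le i-1$ and $T^j_{(i)}(A)=0$ for $j>i$ (in this range one may also read these off from Lemma~\ref{lem kun}). Now the hypothesis enters: the Jacobian ideal $J=(\tfrac{\partial f}{\partial x_1},\dots,\tfrac{\partial f}{\partial x_N})A$ cuts out the zero-dimensional singular locus, so $\operatorname{grade}(J,A)=N-1$ as $A$ is Cohen--Macaulay of dimension $N-1$; hence the Koszul complex is acyclic in homological degrees $\ge 2$, giving $H^j(K^{\bullet})=0$ for $j\le N-2$, $H^N(K^{\bullet})\cong A/J$, and leaving $H^{N-1}(K^{\bullet})$ as the only other possibly nonzero term. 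Substituting $j=n-i$ and running over $1\le i\le n-1$, a short case check (separating $j=i$, $j=i-1$, $j\le i-2$, and $j>i$) shows that all of these summands vanish except exactly one: it is $T^{n/2}_{(n/2)}(A)\cong A/J$ when $n$ is even, and $T^{(n-1)/2}_{((n+1)/2)}(A)\cong H^{N-1}(K^{\bullet})$ when $n$ is odd. Combined with the first step this gives the asserted formula when $n$ is even, and also for every $n\ge N$.

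The one delicate point — and the main obstacle — is the identification $H^{N-1}(K^{\bullet})\cong A/J$ in the odd case. I would work locally at each singular point, where an isolated hypersurface singularity has an isolated critical point, so $\tfrac{\partial f}{\partial x_1},\dots,\tfrac{\partial f}{\partial x_N}$ form a regular sequence in the regular local ambient ring $S$ and $S/(\tfrac{\partial f}{\partial x_1},\dots,\tfrac{\partial f}{\partial x_N})$ is a complete intersection; by self-duality and the Koszul resolution of that complete intersection, resolving $A$ instead by $0\to S\xrightarrow{\cdot f}S\to A\to 0$ gives $H^{N-1}(K^{\bullet})\cong\operatorname{Tor}^S_1\!\bigl(A,\,S/(\tfrac{\partial f}{\partial x_1},\dots,\tfrac{\partial f}{\partial x_N})\bigr)\cong\bigl(0:_{S/(\partial f/\partial x_\bullet)}f\bigr)$, a module of the same finite length as $A/J=S/(f,\tfrac{\partial f}{\partial x_1},\dots,\tfrac{\partial f}{\partial x_N})$. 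When $f$ is (locally) quasi-homogeneous — which holds for all toric singularities, hence in the situation of Example~\ref{ex 3} — the Euler relation gives $f\in(\tfrac{\partial f}{\partial x_1},\dots,\tfrac{\partial f}{\partial x_N})$ and this module equals $S/(\tfrac{\partial f}{\partial x_1},\dots,\tfrac{\partial f}{\partial x_N})\cong A/J$ on the nose; in general one identifies it with $A/J$ using the Gorenstein duality on the complete intersection $S/(\tfrac{\partial f}{\partial x_1},\dots,\tfrac{\partial f}{\partial x_N})$. Apart from this and the modest index bookkeeping above, everything is a direct unwinding of \eqref{wedge iA}.
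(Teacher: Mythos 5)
Your argument is correct and rests on the same skeleton as the paper's proof: the Hodge decomposition, the explicit complex \eqref{wedge iA}, and the perfect-pairing self-duality that turns $\h_A(\wedge^i\LL_{A|k},A)$ into a truncation of the Koszul cochain complex on the partial derivatives. Where you genuinely differ is in the inputs: the paper imports from Michler \cite{mic3} the facts that $\wedge^k\LL_{A|k}\simeq\Omega^k_{A|k}$ for $k\le N-1$ and that $\wedge^N\LL_{A|k}$ has homology only in degrees $0$ and $1$, and then routes the bookkeeping through Lemma \ref{lem kun} and the groups $\e^j_A(\Omega^i_{A|k},A)$; you instead derive all the vanishing directly from $\operatorname{grade}(J,A)=N-1$ (the singular locus is zero-dimensional and $A$ is Cohen--Macaulay) together with the depth-sensitivity of the Koszul complex. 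Your route is more self-contained and in fact reproves the quoted quasi-isomorphisms; your index check that exactly one summand $T^{n-i}_{(i)}(A)$ with $1\le i\le n-1$ survives (at $i=n/2$ or $i=(n+1)/2$) is precisely the paper's assertion that only one $\e$-summand is nonzero.

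The one point to be careful about is the one you flag yourself: for odd $n$ the surviving summand is $H^{N-1}(K^{\bullet})\cong\bigl(0:_{S/(\partial f)}f\bigr)$, which by Matlis duality over the Artinian Gorenstein ring $S/(\partial f)$ is the dual of the Tjurina algebra $A/J$. This yields an isomorphism with $A/J$ as $k$-vector spaces (equal lengths), but as $A$-modules only when $A/J$ is itself Gorenstein --- for instance in the quasi-homogeneous case, where Euler's relation gives $f\in J$ and the module is literally $A/J$. So ``Gorenstein duality identifies it with $A/J$'' should be read $k$-linearly in general. This does not put you behind the paper: the paper cites \cite{mic3}, whose results concern quasi-homogeneous hypersurfaces, for the same identification, and the only place the proposition is applied (Example \ref{ex 3}) is toric, hence quasi-homogeneous.
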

\begin{proof}
%The result for $n<N$ follows from Lemma \ref{lem kun} and the Hodge decomposition.
%In the case $n\geq N$ 
We use Example \ref{ex hype}. The perfect pairing $\Omega^j_{S|k}\otimes_S\Omega^{N-j}_{S|k}\to \Omega^N_{S|k}\cong S$ induces an isomorphism of complexes 
$\h_A(\wedge^N\LL_{A|k},A)[-N]\cong \wedge^N\LL_{A|k}.$
By Michler's result in \cite{mic3} the only nonzero homology groups of $\wedge^N\LL_{A|k}$ are the zeroth and first, both isomorphic to  
$A/(\frac{\partial f}{\partial x_1},\frac{\partial f}{\partial x_2},...,\frac{\partial f}{\partial x_N})$. 
Note that $\Omega^l_{S|k}=0$ holds for $l\geq N+1$ and thus for $i\geq N$ we have that
$$
T^j_{(i)}(A)\cong \left\{
\begin{array}{ll}
A/(\frac{\partial f}{\partial x_1},\frac{\partial f}{\partial x_2},...,\frac{\partial f}{\partial x_N})& \text{ if } j=i-1,i\\
0& \text{ otherwise}.
\end{array}
\right.
$$ 
By \cite{mic3} we also know that $\wedge^k\LL_{A|k}$ is quasi-isomorphic to $\Omega^k_{A|k}$ for $k\leq N-1$. Thus we can easily see that $\e^j_A(\Omega^k_{A|k},A)=0$, if $k\leq N-1$ and $j\ne 0,k-1,k$. Moreover, in the decomposition 
$\e^1_A(\Omega_{A|k}^{n-1},A)\oplus \cdots \oplus \e^{n-1}_A(\Omega_{A|k}^{1},A)$
only one direct summand is nonzero and isomorphic to $\Omega^N_{A|k}\cong A/(\frac{\partial f}{\partial x_1},\frac{\partial f}{\partial x_2},...,\frac{\partial f}{\partial x_N})$. Lemma \ref{lem kun} and
the Hodge decomposition conclude the proof.
\end{proof}
\begin{comment}
\begin{remark}
From Lemma \ref{lem kun} and the proof of Proposition \ref{prop red iso} we see that for $n<N$ many terms in the direct sum $\oplus_{j=0}^{n-1}\e^j(\Omega^{n-j}_{A|k},A)\cong \HH^n(A)$ vanish since $T^j_{(i)}(A)\cong \e^j_A(\Omega^i_{A|k},A)=0$ for $i\geq 2$ and $j\ne 0,i-1,i$.
\end{remark}
\end{comment}

\begin{comment}
Note that $\Omega^j_{S|k}=0$ holds for $j\geq N+1$ and thus for $i\geq N$ we have that $T^k_{(i)}(A)=0$ if $k\ne i-1,i$ and $T^{i-1}_{(i)}(A)\cong T^{i}_{(i)}(A)\cong A/(\frac{\partial f}{\partial x_1},\frac{\partial f}{\partial x_2},...,\frac{\partial f}{\partial x_N}).$ 

For $i<N-1$ we have that $\wedge^i\LL_{A|k}$ is isomorphic to the complex which has $\Omega^i_{A|k}$ in degree 0 and zeros elsewhere (see \cite{mic3}). From this we obtain that for $i\geq 2$ we have 
$$
\e^j_A(\Omega^i_{A|k},A)\cong \left\{
\begin{array}{ll}
A/(\frac{\partial f}{\partial x_1},\frac{\partial f}{\partial x_2},...,\frac{\partial f}{\partial x_N})& \text{ if } j=i-1,i\\
0& \text{ otherwise}.
\end{array}
\right.
$$ 
Lemma \ref{lem kun} and the Hodge decomposition conclude the proof.
\end{comment}
%From the fact $\Omega^{k}_{S|k}=0$ for $k\geq N+1$ we easily finish the proof. 

\section{Hochschild cohomology of toric varieties}

From now on we will restrict ourself in the case of toric varieties and try to simplify the results using the lattice grading that comes with toric varieties. The convex geometric description of Harrison cohomology groups of an affine toric variety was given in \cite{klaus}. 
We generalize this result to the case of Hochschild cohomology groups. 

Let $A=\oplus_{i\in \ZZ} A_i$ be a graded $k$-algebra. If $a_0,...,a_p$ are homogenous elements, define the \emph{weight} of $a_0\otimes \cdots \otimes a_p\in A^{\otimes p+1}$ to be $w=\sum|a_i|$, where $|a_i|=j$ means that $a_i\in A_j$. This makes the tensor product $A^{\otimes p+1}$ into a graded $k$-module. 
Since differentials preserve the weight, this equip both $\HH_p(A)$ and $\HH^p(A)$ with the structure of graded $k$-modules.

\subsection{The Hochschild complex in the toric case}\label{subsec 41}
Definitions  and statements in this subsection already appeared in \cite{klaus} for $i=1$. We give a generalization for arbitrary $i\geq 1$. 

In the case when $\spec(A)$ is an affine toric variety there exists $M$-grading on $A$. 
Let $A=k[\Lambda]=k[\sigma^{\vee}\cap M]$. 
\begin{definition}
We say that an element $f\in C^n(A)$ has degree $R\in M$ if $f$ maps an element with weight $w$ to an element of degree $R+w$ in $A$. This means that $f$ is of the form $f(x^{\lambda_1}\otimes \cdots \otimes x^{\lambda_n})=f_0(\lam_1,..,\lam_n)x^{R+\lam_1+\cdots+\lam_n}$. We need to take care that the expression is well defined, i.e., that $f_0(\lam_1,...,\lam_n)=0$ for $R+\lam_1+\cdots \lam_n\not \in \Lambda$ (in the following we will also use $R+\lam_1+\cdots \lam_n\not \geq 0$ since we can look on $M$ as a partially ordered set where positive elements lie in the cone $\Lambda$). Let $C^{n,R}(A)$ denote the degree $R$ elements of $C^n(A)$ and let $C^{n,R}_{(i)}(A)$ denote the degree $R$ elements of $C^{n}_{(i)}(A)$.
\end{definition}

We would like to understand the space $C^{n,R}(A)$ better and the following definition will be useful.

\begin{definition}
$L\subset \Lambda$ is said to be \emph{monoid-like} if for all elements $\lam_1,\lam_2\in L$ the relation $\lam_1-\lam_2\in \Lambda$ implies $\lam_1-\lam_2\in L$. Moreover, a subset $L_0\subset L$ of a monoid-like  set is called \emph{full} if $(L_0+\Lambda)\cap L=L_0.$ 
\end{definition}

%\begin{remark}
%This notation has already appeared in \cite{klaus}, where they were analysing the Harrison cohomology. 
%\end{remark}

For any subset $P\subset \Lambda$ and $n\geq 1$ we introduce $S_n(P):=\{(\lam_1,...,\lam_n)\in P^n~|~\sum_{v=1}^n\lam_v\in P\}$. If $L_0\subset L$ are as in the previous definition, then this gives rise to the following vector spaces ($1\leq i\leq n$):
$$C^n_{(i)}(L,L\setminus L_0;k):=\{\varphi: S_n(L)\to k~|~\ffi\circ s_n=(2^{i}-2)\ffi, \,\varphi \,\text{ vanishes on }\,S_n(L\setminus L_0)\},$$
which turn into a complex with the differential 
$$d^n:C_{(i)}^{n-1}(L,L\setminus L_0;k)\to C_{(i)}^n(L,L\setminus L_0;k),$$
$$(d^n\varphi)(\lam_1,...,\lam_n):=$$ $$\varphi(\lam_2,...,\lam_n)+\sum^{n-1}_{i=1}(-1)^{i}\varphi(\lam_1,...,\lam_i+\lam_{i+1},...,\lam_n)+(-1)^n\varphi(\lam_1,...,\lam_{n-1}).$$

We will see that this complexes will give us a description of a degree $-R\in M$ part of $H^n_{(i)}(A)$.
\begin{definition}
By $H_{(i)}^n(L,L\setminus L_0;k)$ we denote the cohomology groups of the above complex $C_{(i)}^{\kbb}(L,L\setminus L_0;k)$.
\end{definition}

\begin{lemma}\label{25}
$$C^{n,-R}_{(i)}(A)\cong C^{n}_{(i)}(\Lambda,\Lambda\setminus (R+\Lambda);k).$$
\end{lemma}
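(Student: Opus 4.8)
The plan is to exhibit an explicit isomorphism of vector spaces $C^{n,-R}_{(i)}(A)\to C^n_{(i)}(\Lambda,\Lambda\setminus(R+\Lambda);k)$ and check it is compatible with the two idempotent/shuffle actions defining the $(i)$-part. First I would recall that an element $f\in C^{n,-R}(A)$ is, by the definition of degree $-R$, determined by a function $f_0$ on monomials: $f(x^{\lambda_1}\otimes\cdots\otimes x^{\lambda_n})=f_0(\lambda_1,\dots,\lambda_n)\,x^{-R+\lambda_1+\cdots+\lambda_n}$, with the well-definedness constraint that $f_0(\lambda_1,\dots,\lambda_n)=0$ whenever $-R+\sum\lambda_v\notin\Lambda$, i.e.\ whenever $\sum\lambda_v\notin R+\Lambda$. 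Since the monomials $x^\lambda$, $\lambda\in\Lambda$, form a $k$-basis of $A$ and $A$-module cochains are determined on basis elements, $f\mapsto f_0$ identifies $C^{n,-R}(A)$ with the space of all functions $f_0:\Lambda^n\to k$ that vanish outside $S_n(R+\Lambda)=\{(\lambda_1,\dots,\lambda_n):\ \sum\lambda_v\in R+\Lambda\}$.

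Next I would compare this with the target. Take $L=\Lambda$ and $L_0=R+\Lambda$; note $L_0$ is monoid-like and full inside $L=\Lambda$ (indeed $R+\Lambda$ is the support of a monomial ideal, so $(L_0+\Lambda)\cap\Lambda=L_0$), so the objects $C^n_{(i)}(\Lambda,\Lambda\setminus(R+\Lambda);k)$ are defined. By definition an element there is a function $\varphi:S_n(\Lambda)\to k$ vanishing on $S_n(\Lambda\setminus(R+\Lambda))$, subject to $\varphi\circ s_n=(2^i-2)\varphi$. But $S_n(\Lambda)=\Lambda^n$ (every tuple from $\Lambda$ has sum in $\Lambda$), and the vanishing condition on $S_n(\Lambda\setminus L_0)$ is exactly: whenever all $\lambda_v\in\Lambda\setminus(R+\Lambda)$ \emph{and} $\sum\lambda_v\notin R+\Lambda$, then $\varphi=0$. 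So before imposing the shuffle condition, the two spaces are not literally the same set of functions — one asks $f_0$ to vanish when $\sum\lambda_v\notin R+\Lambda$ (regardless of the individual $\lambda_v$), the other asks $\varphi$ to vanish only on tuples where additionally every entry avoids $R+\Lambda$. The resolution is that the shuffle/idempotent structure does not see this difference in the relevant way, or — more likely, and this is the step I would pin down carefully — that Altmann--Sletsj\o e's convention is set up so that $L_0$-fullness forces the two to agree after passing to $C_{(i)}$; I would check on the level of the underlying $k$-modules that a function vanishing on all tuples whose sum lands outside $R+\Lambda$ is precisely the content needed, and that conversely the extra freedom in $\varphi$ lies in a subspace killed by the constraint, so that the natural map $f_0\mapsto\varphi:=f_0|_{S_n(\Lambda)}$ (automatically vanishing where required) is a bijection.

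Finally I would verify that this bijection intertwines the operator $f\mapsto f\circ s_n$ on cochains with the operator $\varphi\mapsto\varphi\circ s_n$ on functions on $S_n(\Lambda)$: the shuffle $s_n\in S_n[\QQ]$ acts on $C^n(A)$ by permuting and signing tensor arguments, $f\circ\pi(a_1\otimes\cdots\otimes a_n)=f(a_{\pi^{-1}(1)}\otimes\cdots)$, and since the degree $-R$ is preserved by any such permutation (the sum $\sum\lambda_v$ is symmetric), this action is transported under $f\mapsto f_0$ to exactly the action $\varphi\circ\pi(\lambda_1,\dots,\lambda_n)=\varphi(\lambda_{\pi^{-1}(1)},\dots)$ used to define $C^n_{(i)}(L,L\setminus L_0;k)$. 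Hence the eigenspace condition $f\circ s_n=(2^i-2)f$ matches $\varphi\circ s_n=(2^i-2)\varphi$, giving the claimed isomorphism $C^{n,-R}_{(i)}(A)\cong C^n_{(i)}(\Lambda,\Lambda\setminus(R+\Lambda);k)$.

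\textbf{Main obstacle.} The routine parts are the dictionary $f\leftrightarrow f_0$ and the equivariance under $S_n$. The one genuine point requiring care is the precise bookkeeping of the vanishing/support conditions in the second paragraph — reconciling ``$f_0$ vanishes whenever $\sum\lambda_v\notin R+\Lambda$'' with the asymmetric-looking ``$\varphi$ vanishes on $S_n(\Lambda\setminus(R+\Lambda))$'' — and confirming that fullness of $L_0=R+\Lambda$ in $\Lambda$ is exactly what makes these define the same subspace of functions (so that Lemma \ref{25} is literally an equality of the underlying complexes, not merely of cohomology).
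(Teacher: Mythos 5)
Your approach is the same as the paper's: the isomorphism is simply $f\mapsto f_0$, and the paper's proof says no more than that. The one point you flag as needing care --- reconciling ``$f_0$ vanishes whenever $\sum_v\lam_v\notin R+\Lambda$'' with ``$\varphi$ vanishes on $S_n(\Lambda\setminus(R+\Lambda))$'' --- does need to be closed, but you leave it open, and your first guess (that the shuffle/idempotent structure absorbs the discrepancy) is a red herring: the resolution is purely set-theoretic and one line long. If any single entry $\lam_j$ lies in $R+\Lambda$, say $\lam_j=R+\mu$ with $\mu\in\Lambda$, then $\sum_v\lam_v=R+\bigl(\mu+\sum_{v\ne j}\lam_v\bigr)\in R+\Lambda$ as well; contrapositively, $\sum_v\lam_v\notin R+\Lambda$ forces every $\lam_v\in\Lambda\setminus(R+\Lambda)$, so
$$\{(\lam_1,\dots,\lam_n)\in\Lambda^n~|~\textstyle\sum_v\lam_v\notin R+\Lambda\}=S_n(\Lambda\setminus(R+\Lambda)),$$
and the two vanishing conditions cut out exactly the same subspace of functions on $S_n(\Lambda)=\Lambda^n$ --- there is no ``extra freedom in $\varphi$'' that needs to be killed. (This is precisely the absorption property $(R+\Lambda)+\Lambda\subset R+\Lambda$ underlying the ``full'' condition in the paper's definition of $L_0\subset L$.) With that observation, your dictionary $f\leftrightarrow f_0$ and the $S_n$-equivariance check in your last paragraph complete the proof exactly as the paper intends.
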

\begin{proof}
For $f\in C_{(i)}^{n,-R}(A)$, we have $f(x^{\lambda_1}\otimes \cdots \otimes x^{\lambda_n})=f_0(\lam_1,..,\lam_n)x^{\lam_1+\cdots+\lam_n-R}$ and then the isomorphism is given by $f\mapsto f_0$.
\end{proof}

It is a trivial check that Hochschild differentials respect the grading given by the degrees $R\in M$. Thus we get the Hochschild subcomplex $C^{\kbb,-R}_{(i)}$ and we denote the corresponding cohomology groups by $H^{n,-R}_{(i)}(A)\cong T^{n-i,-R}_{(i)}(A)$. %When the ring $A$ will be clear from the context, we will also write $H^{n}_{(i)}(-R)\cong T^{n-i}_{(i)}(-R)$.

From definitions it follows that $C^n_{(i)}(A)=\oplus_RC^{n,-R}_{(i)}(A)$, $C^n(A)=\oplus_RC^{n,-R}(A)$ and $H^n_{(i)}(A)=\oplus_RH^{n,-R}_{(i)}(A)$, $\HH^n(A)=\oplus_R\HH^{n,-R}(A)$.

\begin{proposition}\label{prop 1}
Let $R\in M$ and let $A=k[\Lambda]$. We have 
\begin{equation}\label{sisom}
T^{n-i,-R}_{(i)}(A)\cong H_{(i)}^{n}(\Lambda,\Lambda\setminus (R+\Lambda);k).
\end{equation}
\end{proposition}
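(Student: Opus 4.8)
The plan is to combine the Hodge-graded quasi-isomorphism of Proposition \ref{hodge iso} with the $M$-grading decomposition already set up in this subsection, and then identify the graded piece of the relevant Hochschild subcomplex with the combinatorial complex $C_{(i)}^{\kbb}(\Lambda,\Lambda\setminus(R+\Lambda);k)$. Concretely: by Proposition \ref{hodge iso} we have $T^{n-i}_{(i)}(A)\cong H^n_{(i)}(A)$, and since all the relevant differentials (the Hochschild differential and the idempotent projections $e_n(i)$) preserve the $M$-grading, this isomorphism refines to the graded pieces, giving $T^{n-i,-R}_{(i)}(A)\cong H^{n,-R}_{(i)}(A)$ — this is exactly the statement already recorded right before Proposition \ref{prop 1}. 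So it remains to prove $H^{n,-R}_{(i)}(A)\cong H^n_{(i)}(\Lambda,\Lambda\setminus(R+\Lambda);k)$.

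First I would invoke Lemma \ref{25}, which gives an isomorphism of vector spaces $C^{n,-R}_{(i)}(A)\cong C^n_{(i)}(\Lambda,\Lambda\setminus(R+\Lambda);k)$ via $f\mapsto f_0$. The next step is to check that this assignment is compatible with the two differentials: writing out $(df)(x^{\lam_1}\otimes\cdots\otimes x^{\lam_n})$ from the definition of the Hochschild differential in Section 2, the first term $x^{\lam_1}f(x^{\lam_2}\otimes\cdots\otimes x^{\lam_n})$ produces $f_0(\lam_2,\dots,\lam_n)x^{\lam_1+\cdots+\lam_n-R}$, the middle terms $f(x^{\lam_1}\otimes\cdots\otimes x^{\lam_i}x^{\lam_{i+1}}\otimes\cdots)$ produce $f_0(\lam_1,\dots,\lam_i+\lam_{i+1},\dots,\lam_n)x^{\lam_1+\cdots+\lam_n-R}$ because $x^{\lam_i}x^{\lam_{i+1}}=x^{\lam_i+\lam_{i+1}}$ in $k[\Lambda]$, and the last term similarly. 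Collecting coefficients of $x^{\lam_1+\cdots+\lam_n-R}$ reproduces exactly the formula for $d^n\ffi$ given above. Hence $f\mapsto f_0$ is an isomorphism of complexes $C^{\kbb,-R}_{(i)}(A)\xrightarrow{\sim}C^{\kbb}_{(i)}(\Lambda,\Lambda\setminus(R+\Lambda);k)$, and passing to cohomology yields $H^{n,-R}_{(i)}(A)\cong H^n_{(i)}(\Lambda,\Lambda\setminus(R+\Lambda);k)$. Chaining this with the graded Hodge isomorphism gives \eqref{sisom}.

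The one genuine subtlety — and the place I expect the argument needs care rather than the place it is hard — is the well-definedness and support bookkeeping encoded in the definition of $C^{n,R}(A)$: an element $f$ of degree $-R$ must have $f_0(\lam_1,\dots,\lam_n)=0$ whenever $\lam_1+\cdots+\lam_n-R\notin\Lambda$, and dually $f_0$ is only defined on tuples with all $\lam_v\in\Lambda$. Translating "$\lam_1+\cdots+\lam_n-R\notin\Lambda$" into "$(\lam_1,\dots,\lam_n)\in S_n(\Lambda\setminus(R+\Lambda))$" requires observing that $\sum_v\lam_v\in R+\Lambda$ is equivalent to $\sum_v\lam_v-R\in\Lambda$, and that each individual $\lam_v$ automatically lies in $\Lambda$ (never in $R+\Lambda$ unless it happens to, which is harmless). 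One must also confirm the idempotent conditions match: $f\circ s_n=(2^i-2)f$ on the Hochschild side corresponds under $f\mapsto f_0$ to $\ffi\circ s_n=(2^i-2)\ffi$ on the combinatorial side, because $s_n$ acts by permuting tensor factors, which the weight-preserving structure respects, and the scalar $2^i-2$ is unchanged. Once these identifications are in place the isomorphism of complexes is immediate, and the proposition follows; I do not anticipate any serious obstacle beyond careful verification of these compatibilities.
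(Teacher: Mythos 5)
Your argument is correct and is essentially the paper's own proof, which consists of the single sentence ``We use Lemma \ref{25} and the decomposition of the Hochschild cohomology''; you have merely made explicit the chain-map verification for $f\mapsto f_0$ and the support bookkeeping (noting that if any $\lam_v\in R+\Lambda$ then automatically $\sum_v\lam_v\in R+\Lambda$, so the two vanishing conditions coincide) that the paper leaves implicit.
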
  
\begin{proof}
We use Lemma \ref{25} and the decomposition of the Hochschild cohomology.
\end{proof}

\begin{remark}
We will also use the positive grading $$T^{n-i,R}_{(i)}(A)\cong H_{(i)}^{n}(\Lambda,\Lambda\setminus (-R+\Lambda);k).$$ Poisson structures lie in $T^0_{(2)}(A)$, which is non-zero for positive degrees. 
\end{remark}

\subsection{A double complex of convex sets}\label{subsec42}
In this subsection we follow the paper \cite{klaus} verbatim. Arguments mention in \cite{klaus} in the case $i=1$ work also for arbitrary $i\geq 1$ using the definitions from Subsection \ref{subsec 41}.

Let $\sigma=\lan a_1,...,a_N\ran$.  For $\tau\subset \sigma$ let us define the convex sets introduced in \cite{klaus}:
\begin{equation}\label{eq con set} 
K_{\tau}^R:=\Lambda\cap (R-\mathrm{int}\tau^{\vee}). 
\end{equation}
The above convex sets admit the following properties: 
\begin{itemize}
\item $K_0^R=\Lambda$ and $K_{a_j}^R=\{r\in \Lambda~|~\lan a_j,r \ran<\lan a_j,R \ran\}$ for $j=1,...,N$.
\item For $\tau\ne 0$ the equality $K^R_{\tau}=\cap_{a_j\in \tau}K_{a_j}^R$ holds.
\item $\Lambda \setminus (R+\Lambda)=\cup_{j=1}^NK_{a_j}^R$.
\end{itemize}

We have the following double complexes $C^{\kbb}_{(i)}(K^R_{\kbb};k)$ for each $i\geq 1$. We define 
$C_{(i)}^q(K_{\tau}^R;k):=C_{(i)}^q(K_{\tau}^R,\emptyset;k)$ and
$$C^q_{(i)}(K_p^R;k):=\oplus_{\tau\leq \sigma,\dim \tau=p}C^q_{(i)}(K^R_{\tau};k)~~~~(0\leq p\leq \dim \sigma).$$
The differentials $\delta^p: C_{(i)}^q(K^R_{p})\to C_{(i)}^q(K_{p+1}^R;k)$ are defined in the following way: we are summing (up to a sign) the images of the restriction map $C_{(i)}^q(K_{\tau}^R;k)\to C_{(i)}^q(K_{\tau'}^R;k)$, for any pair $\tau\leq \tau'$ of $p$ and $(p+1)$-dimensional faces, respectively. The sign arises from the comparison of the (pre-fixed) orientations of $\tau$ and $\tau'$ (see also \cite[pg. 580]{cox} for more details).

\begin{example}
The map $\delta: \oplus_{j=1}^NC_{(i)}^q(K_{a_j}^R;k)\to \oplus_{\lan a_j,a_k \ran\leq \sigma}C^q_{(i)}(K_{a_j}^R\cap K_{a_k}^R;k)$ is simply given by: $(f_1,....,f_N)$ gets mapped to $f_j-f_k\in C^q_{(i)}(K_{a_j}^R\cap K_{a_k}^R;k)$. 
\end{example}

The following results (obtained in \cite{klaus} for $i=1$) can also be generalized to $i>1$:

\begin{lemma}\label{acyclic lem}
The canonical $k$-linear map $C_{(i)}^q(\Lambda,\Lambda\setminus (R+\Lambda);k)\to C_{(i)}^q(K^R_{\kbb};k)$ is a quasi-isomorphism, i.e., a resolution of the first vector space.
\end{lemma}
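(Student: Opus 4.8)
\medskip

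The plan is to prove the statement for each fixed cochain degree $q$ separately and to split the whole picture according to the weight of a tuple; this reduces the lemma to an $i$-independent combinatorial acyclicity, which is exactly the argument carried out in \cite{klaus} for $i=1$, with no step using the value of $i$. So fix $q$: we must show that the augmented complex
$$0\to C^q_{(i)}(\Lambda,\Lambda\setminus(R+\Lambda);k)\to C^q_{(i)}(K^R_0;k)\xrightarrow{\delta^0}C^q_{(i)}(K^R_1;k)\xrightarrow{\delta^1}\cdots$$
is exact. Given $(\lam_1,\dots,\lam_q)\in\Lambda^q$, put $\mu:=\lam_1+\cdots+\lam_q$. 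For a face $\tau\le\sigma$ one has $(\lam_1,\dots,\lam_q)\in S_q(K^R_\tau)$ if and only if $\mu\in K^R_\tau$ (if $\mu\in K^R_\tau$ then $\lan a_j,\lam_v\ran\le\lan a_j,\mu\ran<\lan a_j,R\ran$ for all $a_j\in\tau$ and all $v$, so every $\lam_v\in K^R_\tau$), while $\mu\in K^R_\tau$ if and only if every generator of $\tau$ lies in $J(\mu):=\{\,j\mid\lan a_j,\mu\ran<\lan a_j,R\ran\,\}$; likewise $(\lam_1,\dots,\lam_q)\in S_q(\Lambda\setminus(R+\Lambda))$ if and only if $J(\mu)\ne\emptyset$.

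Since the shuffle operators $s_q$ only permute the tensor factors $\lam_1,\dots,\lam_q$, they preserve $\mu$, so $C^q_{(i)}(K^R_\tau;k)=\bigoplus_{\mu\in K^R_\tau}W^{(i)}_\mu$, where $W^{(i)}_\mu$ is the $(2^i-2)$-eigenspace of $s_q$ on the $k$-valued functions on $\{(\lam_1,\dots,\lam_q)\in\Lambda^q\mid\sum_v\lam_v=\mu\}$, and the face restrictions of the double complex act as the identity on matching weights and as $0$ otherwise. Hence $C^q_{(i)}(K^R_\kbb;k)\cong\bigoplus_{\mu\in\Lambda}W^{(i)}_\mu\otimes_k D^{J(\mu)}_\kbb$ and the subspace $C^q_{(i)}(\Lambda,\Lambda\setminus(R+\Lambda);k)\cong\bigoplus_{\mu:\,J(\mu)=\emptyset}W^{(i)}_\mu$, where $D^J_\kbb$ is the $i$-independent complex with $D^J_p=\bigoplus k$, the sum over faces $\tau\le\sigma$ of dimension $p$ all of whose generators lie in $J$, and differential the signed alternating sum of face restrictions. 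As tensoring with $W^{(i)}_\mu$ is exact, the lemma reduces to the following two $i$-independent claims: $D^{\emptyset}_\kbb=[\,k\to0\to\cdots]$, so for $J(\mu)=\emptyset$ the augmentation is an isomorphism; and for every $\mu$ with $J(\mu)\ne\emptyset$ the complex $D^{J(\mu)}_\kbb$ is acyclic. The first is trivial, and it is exactly here that the case $i>1$ becomes literally the case $i=1$, since $D^J_\kbb$ does not see $i$.

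To handle the acyclicity, I would make $D^J_\kbb$ geometric. After the standard reduction we may assume $\sigma$ pointed and full-dimensional; cutting it by a transverse hyperplane yields a polytope $P$ whose vertices $p_1,\dots,p_N$ correspond to the rays $a_1,\dots,a_N$ and whose faces $F$ correspond to the faces $\tau\ne0$ of $\sigma$ with $\dim\tau=\dim F+1$, while $\tau=0$ corresponds to the empty face. If $\ell$ is the affine functional on $P$ induced by $R-\mu$, then $j\in J(\mu)\iff\ell(p_j)>0$, so a face $\tau$ has all generators in $J(\mu)$ exactly when the corresponding face of $P$ lies in the open half-space $\{\ell>0\}$. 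Thus $D^{J(\mu)}_\kbb$ is, up to the shift $p\mapsto p-1$, the reduced (augmented) cellular cochain complex of the polytopal complex $P_{>0}:=\{\,F\le P\mid F\subseteq\{\ell>0\}\,\}$, whence $H^p\big(D^{J(\mu)}_\kbb\big)\cong\widetilde H^{p-1}(|P_{>0}|;k)$, and it remains only to see that $|P_{>0}|$ is $k$-acyclic when nonempty. It is in fact contractible: choose $\varepsilon>0$ below the least positive value of $\ell$ on the vertices of $P$, put $\widehat P:=P\cap\{\ell\ge\varepsilon\}$ with bottom facet $B_0:=\widehat P\cap\{\ell=\varepsilon\}$; a face of $P$ lies in $\{\ell>0\}$ precisely when the corresponding face of $\widehat P$ is disjoint from $B_0$, so $|P_{>0}|$ is the union of the faces of $\widehat P$ missing $B_0$, i.e.\ the subcomplex of cells avoiding the boundary of the ball $|\partial\widehat P|\setminus\operatorname{relint}|B_0|$, and hence a deformation retract of that ball. (When $J(\mu)=\{1,\dots,N\}$ one has $P\subseteq\{\ell>0\}$ and $|P_{>0}|=|P|$, a ball — the only case in which $\sigma$ itself, equivalently $P$, contributes.)

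The delicate point — the single genuinely geometric input, and the reason one passes to $P$ rather than to an abstract nerve — is the behaviour of non-simplicial cones. In general a set of rays need not span a face, so $D^J_\kbb$ is not the chain complex of a simplicial complex, and for an arbitrary proper nonempty $J$ it need not be acyclic at all: for the cone over a square, with $a_1,a_3$ the rays over two opposite vertices, $D^J_\kbb=[\,k\to k^2\to0]$ is not exact. What makes this harmless is that such a $J$ never equals any $J(\mu)$ — it is never of the form $\{\,j\mid\ell(p_j)>0\,\}$ for an affine functional $\ell$ — being obstructed by the linear relations among the $a_j$ (for the square, $a_1+a_3=a_2+a_4$ forces $\ell(p_1)+\ell(p_3)=\ell(p_2)+\ell(p_4)$); this is precisely what legitimises cutting with a genuine half-space in the previous step. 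Granting this, the computation of \cite{klaus} in degree $i=1$ applies verbatim for all $i\ge1$.
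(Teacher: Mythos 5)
Your argument is essentially the paper's own: the proof in the text defines the weight spaces $V^q_{(i)}(r)$ (your $W^{(i)}_\mu$), splits the double complex accordingly into $i$-independent combinatorial factors, and then invokes the argument of Altmann--Sletsj\o e, which is exactly the identification of each factor with the (co)chain complex of the union of faces of $\sigma$ lying in the open half-space $H^+_{\mu,R}$ together with the contractibility of that union --- the step you carry out explicitly via the cross-section polytope $P$ and the retraction onto the part of $\partial\widehat P$ away from $B_0$. The only cosmetic slip is that the splitting over weights is a direct product rather than a direct sum (the function spaces are unrestricted), which does not affect exactness.
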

\begin{proof}
For $r\in \Lambda\subset M$ we define the $k$-vector space 
$$V^q_{(i)}(r):=\{\varphi:\{(\lam_1,...,\lam_q)\in \Lambda^q~|~\sum_{v=1}^q\lam_v=r\}\to k~|~\ffi\circ s_n=(2^{i}-2)\ffi \}.$$
and the rest follows as in \cite{klaus}.
\begin{comment}
Then our complex $C_{(i)}^q(K^R_{\kbb};k)$ splits into a direct product over $r\in \Lambda$. Its homogenous factors equal 
\begin{equation}\label{eq hom degree}
0\to V_{(i)}^q(r)\to V_{(i)}^q(r)^{\{j~|~r\in K_{a_j}^R\}}\to V_{(i)}^q(r)^{\{\tau\leq \sigma~|~\dim \tau=2; ~r\in K_{\tau}^R\}}\to \cdots.
\end{equation}
Let $H^+_{r,R}:=\{a\in N_{\RR}~|~\lan a,r \ran<\lan a,R \ran\}\subset N_{\RR}$. The relation $r\in K_{\tau}^R$ is equivalent to $\tau\setminus \{0\}\subset H^+_{r,R}$.
The result follows as in \cite{klaus} when we compare the complex \eqref{eq hom degree} with the complex for computing the reduced cohomology of the contractible topological space $\bigcup_{\tau\setminus \{0\}\subset H^{+}_{r,R}}(\tau\setminus \{0\})\subset \sigma$.
\end{comment}
\end{proof}

\begin{proposition}\label{total}
$T^{n-i,-R}_{(i)}(A)=H^{n}\big (\tot^{\kbb}(C_{(i)}^{\kbb}(K^R_{\kbb};k))\big )$ for $1\leq i\leq n$.
\end{proposition}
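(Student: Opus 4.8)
The plan is to deduce Proposition~\ref{total} by combining Proposition~\ref{prop 1}, Lemma~\ref{acyclic lem}, and a standard spectral-sequence (or hypercohomology) argument relating the cohomology of a total complex to the cohomology of the object it resolves. Concretely, recall from Proposition~\ref{prop 1} that $T^{n-i,-R}_{(i)}(A)\cong H^n_{(i)}(\Lambda,\Lambda\setminus(R+\Lambda);k)$, the $n$-th cohomology of the complex $C^{\kbb}_{(i)}(\Lambda,\Lambda\setminus(R+\Lambda);k)$. So it suffices to show that the hypercohomology of this single complex agrees with $H^n$ of the total complex of the double complex $C^{\kbb}_{(i)}(K^R_{\kbb};k)$.

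First I would set up the double complex $D^{p,q}:=C^q_{(i)}(K^R_p;k)$ with vertical differential the Hochschild-type coboundary $d^q$ (in the $q$-direction, coming from the complexes $C^{\kbb}_{(i)}(K^R_\tau;k)$) and horizontal differential $\delta^p$ (the Čech-style alternating sum of restriction maps over faces $\tau\leq\sigma$), as described just before the statement. The key input is Lemma~\ref{acyclic lem}: for each fixed $q$, the augmented row
$$0\to C^q_{(i)}(\Lambda,\Lambda\setminus(R+\Lambda);k)\to C^q_{(i)}(K^R_0;k)\to C^q_{(i)}(K^R_1;k)\to\cdots$$
is exact, i.e.\ the natural map $C^q_{(i)}(\Lambda,\Lambda\setminus(R+\Lambda);k)\to C^q_{(i)}(K^R_\kbb;k)$ is a quasi-isomorphism in the $\delta$-direction. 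Therefore, running the spectral sequence of the double complex $D^{\kbb,\kbb}$ by first taking horizontal ($\delta$-)cohomology, the $E_1$ page is concentrated in the column $p=0$ and equals $C^q_{(i)}(\Lambda,\Lambda\setminus(R+\Lambda);k)$ there; this spectral sequence degenerates at $E_2$ and yields $H^n(\tot^{\kbb}(D^{\kbb,\kbb}))\cong H^n\big(C^{\kbb}_{(i)}(\Lambda,\Lambda\setminus(R+\Lambda);k)\big)=H^n_{(i)}(\Lambda,\Lambda\setminus(R+\Lambda);k)$. Combined with Proposition~\ref{prop 1}, this gives $T^{n-i,-R}_{(i)}(A)=H^n(\tot^{\kbb}(C^{\kbb}_{(i)}(K^R_\kbb;k)))$ for $1\leq i\leq n$, as claimed.

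One bookkeeping point I would address: the double complex is bounded in the $p$-direction (since $0\leq p\leq\dim\sigma$ is finite), and bounded below in the $q$-direction, so there are no convergence issues and the total complex is the direct sum total complex; the spectral sequence argument is then entirely standard. I would also remark that instead of spectral sequences one can phrase this as an ordinary diagram chase / acyclic-assembly lemma: since each row of $D$ is a resolution of $C^q_{(i)}(\Lambda,\Lambda\setminus(R+\Lambda);k)$, the augmented double complex has exact rows, hence its total complex is acyclic, which forces the inclusion $C^{\kbb}_{(i)}(\Lambda,\Lambda\setminus(R+\Lambda);k)\hookrightarrow\tot^{\kbb}(D^{\kbb,\kbb})$ to be a quasi-isomorphism.

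The main obstacle is not in this assembly step — which is routine homological algebra — but rather in Lemma~\ref{acyclic lem} itself, i.e.\ in verifying that the rows really are exact; that is the place where the convexity/topology of the cone enters (identifying the homogeneous degree-$r$ piece of the row with the reduced cochain complex of a contractible subspace $\bigcup_{\tau\setminus\{0\}\subset H^+_{r,R}}(\tau\setminus\{0\})$ of $\sigma$). Since Lemma~\ref{acyclic lem} is already established in the excerpt, the proof of Proposition~\ref{total} reduces, as indicated, to citing Lemma~\ref{acyclic lem} together with Proposition~\ref{prop 1} and invoking the spectral sequence of the double complex.
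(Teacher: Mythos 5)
Your argument is correct and is essentially the paper's own proof: the paper likewise establishes the proposition by combining Lemma~\ref{acyclic lem} (row-exactness of the double complex $C^{\kbb}_{(i)}(K^R_{\kbb};k)$ in the $\delta$-direction) with the standard double-complex spectral sequence and the identification $T^{n-i,-R}_{(i)}(A)\cong H^n_{(i)}(\Lambda,\Lambda\setminus(R+\Lambda);k)$ from Proposition~\ref{prop 1}. You have merely written out in detail the routine assembly step that the paper leaves implicit.
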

\begin{proof}
We prove the Proposition using first the differentials $d^n$ and Lemma \ref{acyclic lem} and then the differentials $\delta^p$.
\end{proof}

\begin{corollary}\label{second spectral}
Let $i\geq 1$ be a fixed integer. For $q\geq i$ and $p\geq 0$ there is a spectral sequence $$E_1^{p,q}=\oplus_{\tau\leq \sigma,\dim \tau=p} H_{(i)}^q(K_{\tau}^R;k)\Rightarrow T_{(i)}^{p+q-i,-R}(A)=H^{p+q,-R}_{(i)}(A).$$
\end{corollary}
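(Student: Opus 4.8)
The plan is to derive Corollary \ref{second spectral} directly from Proposition \ref{total} by applying the standard spectral sequence of the double complex $C^{\kbb}_{(i)}(K^R_{\kbb};k)$. Recall that for any first-quadrant double complex with total complex $\tot^{\kbb}$, there are two spectral sequences converging to $H^{\kbb}(\tot^{\kbb})$, obtained by first taking cohomology in one direction and then in the other. Here the two directions are indexed by $p$ (the ``$\delta$''-direction, running over faces $\tau\leq\sigma$ with $\dim\tau=p$) and by $q$ (the ``$d$''-direction, the cohomological degree of the complexes $C^{\kbb}_{(i)}(K^R_{\tau};k)$). The corollary is the statement that the spectral sequence obtained by \emph{first} taking $d$-cohomology has $E_1$-page $E_1^{p,q}=\bigoplus_{\dim\tau=p}H^q_{(i)}(K^R_{\tau};k)$.

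Concretely, I would proceed as follows. First, fix $i\geq 1$ and consider the double complex $D^{p,q}:=C^q_{(i)}(K^R_p;k)=\bigoplus_{\tau\leq\sigma,\,\dim\tau=p}C^q_{(i)}(K^R_{\tau};k)$, with horizontal differential $\delta^p$ and vertical differential $d^q$ (these commute up to sign by construction, since $\delta$ is built from restriction maps $C^q_{(i)}(K^R_{\tau};k)\to C^q_{(i)}(K^R_{\tau'};k)$ which are chain maps for the $d$-differential, the signs coming from orientations of faces). Note that $D^{p,q}=0$ unless $0\leq p\leq\dim\sigma$ and, since $C^q_{(i)}$ is a quotient of $C^q_{(i)}(\cdot;k)$ which vanishes for $q<i$ (an $i$-th Hodge component of a $q$-cochain is zero for $q<i$, as $e_q(i)=0$ there), also $q\geq i$; in particular the double complex is concentrated in a bounded strip in the $p$-direction and a half-plane in the $q$-direction, so both spectral sequences converge. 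Second, take cohomology with respect to $d^q$ first: by definition the $E_1$-term is $E_1^{p,q}=H^q(D^{p,\kbb},d)=\bigoplus_{\dim\tau=p}H^q_{(i)}(K^R_{\tau};k)$, with $d_1$ induced by $\delta$. Third, invoke Proposition \ref{total}, which identifies $H^n(\tot^{\kbb}(D^{\kbb,\kbb}))$ with $T^{n-i,-R}_{(i)}(A)=H^{n,-R}_{(i)}(A)$; the total-degree-$n$ part of the double complex sits in bidegrees $p+q=n$, so the spectral sequence converges to $T^{p+q-i,-R}_{(i)}(A)$ as stated. This gives exactly the claimed $E_1^{p,q}\Rightarrow T^{p+q-i,-R}_{(i)}(A)$.

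There is essentially no hard step here: the only things to check are the bookkeeping about degrees (shifting by $i$ because of the isomorphism $H^{n,-R}_{(i)}(A)\cong T^{n-i,-R}_{(i)}(A)$ from Subsection \ref{subsec 41}, and the vanishing range $q\geq i$), and the observation that convergence is guaranteed because $\tot^{\kbb}$ has finite-dimensional pieces in each total degree restricted to a fixed face-poset layer --- more precisely, the $p$-direction is finite (bounded by $\dim\sigma$), which suffices for convergence of the spectral sequence filtered by $p$. So the main ``obstacle'', such as it is, is purely organizational: making sure the total complex used in Proposition \ref{total} is indexed so that its degree-$n$ part corresponds to $p+q=n$, and hence the abutment in cohomological degree $p+q$ is $T^{(p+q)-i,-R}_{(i)}(A)$. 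One should also remark (as is implicit in the surrounding text, following \cite{klaus}) that the second spectral sequence --- taking $\delta$-cohomology first --- is the one used in Lemma \ref{acyclic lem} and degenerates, which is why Proposition \ref{total} holds; the corollary simply exploits the \emph{other} filtration of the same double complex.
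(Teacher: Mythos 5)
Your proposal is correct and matches the paper's (one-line) proof: the paper likewise obtains the corollary as the second spectral sequence of the double complex $C^{\kbb}_{(i)}(K^R_{\kbb};k)$, taking cohomology with respect to the differentials $d$ and $\delta$ in the opposite order to the one that yields Proposition \ref{total}, whose conclusion identifies the abutment. The details you supply (commutation of $\delta$ and $d$ up to sign, the vanishing for $q<i$, convergence from boundedness in the $p$-direction) are exactly the routine checks the paper leaves implicit.
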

\begin{proof}
We use first the differentials $\delta^p$ and then the differentials $d^n$. 
\end{proof}

\begin{proposition}\label{smooth}
If $\tau\leq \sigma$ is a smooth face, then $H^q_{(i)}(K^R_{\tau};k)=0$ for $q\geq i+1$.
\end{proposition}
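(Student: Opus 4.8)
\section*{Proof proposal for Proposition \ref{smooth}}

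The plan is to induct on $\dim\tau$: stripping off, one at a time, the inequalities that bound $K^R_\tau$ will reduce the statement to the extreme case $\tau=0$ and to a one-dimensional computation, and both of those can be read off from the vanishing of higher Andr\'e-Quillen cohomology of smooth rings. Some preliminary bookkeeping first: since $\Lambda\subseteq\tau^{\vee}$, every $r\in\Lambda$ satisfies $\lan a_j,r\ran\ge 0$ for $a_j\in\tau$, so if $\lan a_j,R\ran\le 0$ for some $a_j\in\tau$ then $K^R_\tau=\emptyset$ and there is nothing to prove; assume henceforth $\lan a_j,R\ran>0$ for all $a_j\in\tau$, so that $0\in K^R_\tau$. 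As $\tau$ is smooth, its primitive generators $a_1,\dots,a_s$ extend to a $\ZZ$-basis $a_1,\dots,a_d$ of $N$; writing $r_j:=\lan a_j,r\ran$ we then have $K^R_\tau=\{r\in\Lambda\mid 0\le r_j<\lan a_j,R\ran,\ j=1,\dots,s\}$, so the inequalities carving $K^R_\tau$ out of $\Lambda$ only touch the $s$ coordinates belonging to $\tau$.

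The engine is a one-dimensional computation: $H^q_{(\ell)}([0,m),\emptyset;k)=0$ for all $q>\ell$, $\ell\ge 1$, $m\ge 1$, where $[0,m)=\{0,1,\dots,m-1\}\subseteq\NN$. This follows from the short exact sequence of complexes
$$0\to C^{\kbb}_{(\ell)}(\NN,[0,m);k)\to C^{\kbb}_{(\ell)}(\NN,\emptyset;k)\to C^{\kbb}_{(\ell)}([0,m),\emptyset;k)\to 0,$$
whose right-hand map is restriction of cochains to $S_{\kbb}([0,m))$ (surjective by extension by zero, with kernel the relative complex by definition); this is compatible with the shuffle idempotents because membership of a tuple in $S_n([0,m))\subseteq S_n(\NN)$ depends only on the total degree $\sum\lambda_v$, hence is permutation invariant. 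By Proposition \ref{prop 1} applied to the smooth algebra $k[x]$, the first two terms have cohomology $T^{\bullet-\ell,-m}_{(\ell)}(k[x])$ and $T^{\bullet-\ell,0}_{(\ell)}(k[x])$, both of which vanish above cohomological degree $\ell$; the long exact sequence gives the claim. The same mechanism drives the induction on $s=\dim\tau$. For $s>0$ put $\tau'=\lan a_1,\dots,a_{s-1}\ran$, a smooth face with $K^R_\tau\subseteq K^R_{\tau'}$; as $K^R_{\tau'}\setminus K^R_\tau=K^R_{\tau'}\cap\{\lan a_s,\cdot\,\ran\ge\lan a_s,R\ran\}$ is a full subset of $K^R_{\tau'}$, there is a short exact sequence
$$0\to C^{\kbb}_{(i)}(K^R_{\tau'},K^R_\tau;k)\to C^{\kbb}_{(i)}(K^R_{\tau'},\emptyset;k)\to C^{\kbb}_{(i)}(K^R_\tau,\emptyset;k)\to 0,$$
once again compatible with the idempotents since the condition cutting $S_n(K^R_\tau)$ out of $S_n(K^R_{\tau'})$ only constrains $\lan a_s,\sum\lambda_v\ran$. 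By the inductive hypothesis the middle term has cohomology concentrated in degrees $\le i$, so the long exact sequence reduces everything to controlling the relative term $H^{\kbb}_{(i)}(K^R_{\tau'},K^R_\tau;k)$ in high cohomological degree. For $s=0$ one has $K^R_0=\Lambda$, hence $H^q_{(i)}(K^R_0;k)\cong T^{q-i,0}_{(i)}(A)$.

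I expect the two non-formal ingredients to be (a) the base case $s=0$, i.e.\ $T^{j,0}_{(i)}(A)=0$ for $j\ge 1$ --- the rigidity of the affine toric variety under torus-equivariant deformations --- and (b) the relative term $H^{\kbb}_{(i)}(K^R_{\tau'},K^R_\tau;k)$ in the inductive step. For (a): the cotangent complex of the pointed affine semigroup algebra $A=k[\Lambda]$ is representable by a semifree resolution all of whose generators sit in strictly positive internal degree, so $\wedge^i\LL_{A|k}$, and therefore $\Omega^i_{A|k}$ and every $T_q^{(i)}(A)$, lives in strictly positive internal degrees; translating this into the vanishing of the degree-$0$ part of the cohomology of $\h_A(\wedge^i\LL_{A|k},A)$ is the delicate step, and I would run it through the $\e$-spectral sequence of Lemma \ref{lem kun}. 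For (b): I would peel off a further bounding inequality by the same device, isolating the single direction $a_s$ and ultimately invoking the one-dimensional vanishing above. A parallel route circumventing the induction: localize $A$ at the monomial cutting out the torus orbit of $\tau$ to obtain the smooth ring $B=k[\tau^{\vee}\cap M]$, whose higher $T^{\bullet}_{(i)}$ vanish since Andr\'e-Quillen cohomology localizes, and then --- after comparing $K^R_\tau$ with the corresponding box over $\tau^{\vee}\cap M$ --- decompose $C^{\kbb}_{(i)}(K^R_\tau;k)$ by a K\"unneth argument (Gerstenhaber-Schack for the Hodge pieces of the Hochschild complex of a tensor product) into the one-dimensional complexes $C^{\kbb}_{(\bullet)}([0,\lan a_j,R\ran);k)$, $j\le s$, together with a complex governing $B$; a surviving summand of $H^q_{(i)}$ is a tensor product $\bigotimes H^{q_\ell}_{(i_\ell)}$ with $\sum q_\ell=q$, $\sum i_\ell=i$ and each $q_\ell\le i_\ell$ (cochains of shuffle weight exceeding their tensor degree vanish), forcing $q\le i$. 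In either approach the routine but unavoidable check is that all maps in sight preserve the eigenspaces $C^{\kbb}_{(i)}$ of the shuffle idempotents.
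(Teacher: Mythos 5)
Your reduction to the smoothness of one-dimensional (and torus) coordinate rings is the right instinct, and your one-dimensional computation via Proposition \ref{prop 1} for $k[x]$ is fine; but the two points you yourself flag as ``non-formal ingredients'' are exactly the load-bearing steps, and neither of your sketches for them closes. For the base case $\tau=0$ you need $T^{j,0}_{(i)}(A)=0$ for $j\geq 1$, and this is not a smaller or independent statement: it \emph{is} the $\tau=0$ instance of the proposition. Your proposed route --- generators of $\wedge^i\LL_{A|k}$ in strictly positive internal degree, hence vanishing of the degree-$0$ part of $T^j_{(i)}$ --- does not work as stated: $\h_A(-,A)$ sends a generator of degree $\lam\in\Lambda\setminus\{0\}$ to something of degree $-\lam$, and degree-$0$ classes can (and for $j=0$ do) arise, e.g.\ the toric derivations $x^\lam\mapsto\lan a,\lam\ran x^\lam$ in $T^0_{(1)}(A)_0$. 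So positivity of internal degrees of the cotangent complex gives no control on the degree-$0$ part of its dual, and the ``delicate step'' you defer to the spectral sequence of Lemma \ref{lem kun} is precisely the whole content of the claim.

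The inductive step has the same problem: the long exact sequence of your second short exact sequence reduces $H^q_{(i)}(K^R_\tau;k)$ for $q\geq i+1$ to the relative groups $H^{q+1}_{(i)}(K^R_{\tau'},K^R_\tau;k)$, but this relative complex is not of the form $C^{\kbb}_{(i)}(\Lambda,\Lambda\setminus(R+\Lambda);k)$, so Proposition \ref{prop 1} does not identify it with any Andr\'e--Quillen group, and no K\"unneth statement for the Hodge pieces of these truncated combinatorial complexes is available in the paper (nor is $K^R_\tau$, which sits inside $\Lambda=\sigma^\vee\cap M$, actually a product, unlike the box inside $\tau^\vee\cap M$ that your alternative route would need it to be). The paper closes both gaps simultaneously with a single argument you did not find: choosing $r(\tau)\in\mathrm{int}(\sigma^\vee\cap\tau^\perp)\cap M$ and setting $R_g=R-g\cdot r(\tau)$, one shows $H^q_{(i)}(K^R_\tau;k)\cong T^{q+\dim\tau-i}_{(i)}(-R_g)$ for \emph{all} $g\gg 0$ (for $g$ large only the faces of $\tau$ contribute to the double complex of Proposition \ref{total}); multiplication by $x^{r(\tau)}$ permutes these isomorphic graded pieces, while localization at $x^{r(\tau)}$ yields the smooth ring $k[\tau^\vee\cap M]$, so every class in $T^{n}_{(i)}(A)$, $n\geq 1$, is killed by a power of $x^{r(\tau)}$; a class that is both nilpotent under and invariant under a degree shift must vanish. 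This handles $\tau=0$ (localize to the torus $k[M]$) and all higher-dimensional smooth faces uniformly, which is what your induction is missing at both ends.
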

\begin{proof}
Let $r(\tau)$ be an arbitrary element of $\text{int}(\sigma^{\vee}\cap \tau^{\perp})\cap M$, i.e., $\tau=\sigma\cap r(\tau)^{\perp}$. We define $R_g:=R-g\cdot r(\tau)$, where $g\in \ZZ$  and we show (with the same idea as in \cite{klaus}) that
\begin{equation}\label{eq smooth 1}
T_{(i)}^{q+\dim \tau-i}(-R_g)=H_{(i)}^{q}(K^R_{\tau};k)\text{ for }g\gg 0,
\end{equation}
for $q\geq i+1$.

Let $T_{(i)}^n(\tau):=T^n_{(i)}(\spec(k[\tau^{\vee}\cap M]))$ and similarly $T_{(i)}^n(\sigma):=T^n_{(i)}(A)$. We have
\begin{equation}\label{eq smooth 2}
T^n_{(i)}(\sigma)\otimes_{k[\sigma^{\vee}\cap M]}k[\sigma^{\vee}\cap M]_{x^{r(\tau)}}=T^n_{(i)}(\tau)=0\text{   for  }n\geq 1,
\end{equation} 
since $k[\tau^{\vee}\cap M]$ equals the localization of $k[\sigma^{\vee}\cap M]$ by the element $x^{r(\tau)}$. The last equality holds because $\tau$ is a smooth face.
From \eqref{eq smooth 2} we see that any element of $T_{(i)}^{q+\dim \tau-i}(-R_g)\subset T_{(i)}^{q+\dim \tau-i}$ will be killed by some power of $x^{r(\tau)}$, which implies that $H_{(i)}^{q}(K^R_{\tau};k)=0$ by \eqref{eq smooth 1}.
\end{proof}

\subsection{The Hochschild cohomology in degree $R\in M$}
The results in this subsection do not follow immediately from \cite{klaus} as in Subsection \ref{subsec42}. Quasi-linear functions (see \cite[Definition 4.1]{klaus}) defined on the convex sets $K^R_{\tau}$ play an important role in describing $T^1_{(1)}(-R)$. In this subsection we show that multi-additive functions (see Definition \ref{def multi-ad}) are the right generalization for describing $T^1_{(i)}(-R)$ for $i\geq 1$.
The main result in this subsection is Theorem \ref{th import}, which is a generalization of \cite[Proposition 5.2]{klaus}. %It will follow from spectral sequence arguments on the double complex defined in Subsection \ref{subsec42}

We would like to better understand $H^n_{(n)}(K_{\tau}^R;k)$ for $\tau\leq \sigma$. 
These computations are easier then computations for $H^n_{(i)}(K_{\tau}^R;k)$, $i\ne n$, because in the case $i=n$ we do not have coboundaries.

\begin{definition}\label{def multi-ad}
We say that $f\in C^n_{(n)}(L,L\setminus L_0;k)$ is multi-additive if it is additive on every component, provided that the sum of all entries lies in $L$. Being additive in the first component means $f(a+b,\lam_2,...,\lam_n)=f(a,\lam_2,...,\lam_n)+f(b,\lam_2,...,\lam_n)$, with $a+b+\lam_1+\cdots +\lam_n\in L$. 
We denote $$\bar{C}^n_{(n)}(L,L\setminus L_0;k):=\{f\in C^n_{(n)}(L,L\setminus L_0;k)~|~f~\text{is multi-additive}\}.$$
\end{definition}

In the case $n=1$ it holds trivially that $H^{1}_{(1)}(L,L\setminus L_0;k)=\bar{C}^1_{(1)}(L,L\setminus L_0;k)$. Some additional effort is necessary to show this for $n>1$.

\begin{proposition}\label{multi additive}
We have
$$H^{n}_{(n)}(L,L\setminus L_0;k)=\bar{C}^n_{(n)}(L,L\setminus L_0;k)$$
for all $n\geq 1$.
\end{proposition}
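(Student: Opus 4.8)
The plan is to prove the two inclusions $\bar C^n_{(n)}(L,L\setminus L_0;k)\subseteq Z^n_{(n)}$ (cocycles, i.e.\ the full group since there are no coboundaries in weight $n$) and conversely that every cocycle in $C^n_{(n)}(L,L\setminus L_0;k)$ is automatically multi-additive. Since $C^\kbb_{(n)}$ has its lowest nonzero term in degree $n$, there are no $(n)$-coboundaries into $C^n_{(n)}$, so $H^n_{(n)}(L,L\setminus L_0;k)=Z^n_{(n)}(L,L\setminus L_0;k)=\ker\big(d^{n+1}\colon C^n_{(n)}\to C^{n+1}_{(n)}\big)$; thus the statement is equivalent to the identity $Z^n_{(n)}=\bar C^n_{(n)}$.

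First I would dispose of the easy inclusion: if $f$ is multi-additive, I compute $d^{n+1}f$ directly. In each of the middle terms $(-1)^i f(\lam_1,\dots,\lam_i+\lam_{i+1},\dots,\lam_{n+1})$ additivity in the $i$-th slot splits it as $(-1)^i\big(f(\dots,\lam_i,\dots)+f(\dots,\lam_{i+1},\dots)\big)$ (the sum of all entries $\lam_1+\cdots+\lam_{n+1}$ is unchanged, so the required membership in $L$ is preserved); then the resulting $2n$ terms telescope against the first term $f(\lam_2,\dots,\lam_{n+1})$ and the last term $f(\lam_1,\dots,\lam_n)$, killing $d^{n+1}f$. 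One must check that the relevant partial sums all lie in $L$ so the expressions are defined — but the functions vanish off $S_{n+1}(L)$ and the monoid-like/full structure of $L_0\subseteq L$ guarantees that whenever $\sum_v\lam_v\in L$ the sub-tuples appearing are legitimate arguments, because $L$ is monoid-like. Also one should note that the Eulerian idempotent condition $f\circ s_n=(2^n-2)f$ is stable under this operation, which is automatic since we are inside the fixed eigenspace.

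The substantive direction is: a cocycle $f\in Z^n_{(n)}$ is multi-additive. Here I would exploit that $f$ lies in the top Hodge piece $C^n_{(n)}$, which over a $\QQ$-algebra is the image of the Eulerian idempotent $e_n(n)$, and $e_n(n)$ is (up to scalar) the total antisymmetrization composed with the natural projection — equivalently, $f$ is, after the shuffle-eigenvalue normalization, the restriction of a skew-symmetric form. The cocycle equation $d^{n+1}f=0$ reads $f(\lam_2,\dots,\lam_{n+1})-f(\lam_1+\lam_2,\lam_3,\dots)+f(\lam_1,\lam_2+\lam_3,\dots)-\cdots=0$; specializing and combining with the shuffle identities, one extracts additivity in a single slot. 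Concretely: fix $\lam_3,\dots,\lam_{n+1}$, set $a=\lam_1$, $b=\lam_2$, and use the cocycle relation together with the $(n)$-component condition to show $f(a+b,\lam_3,\dots)=f(a,\lam_3,\dots)+f(b,\lam_3,\dots)$ modulo terms that vanish by skew-symmetry (terms with a repeated or ``degenerate'' argument pattern are killed by $e_n(n)$). This mirrors the classical fact that an alternating Hochschild cocycle in the HKR picture corresponds to an element of $\h_A(\Omega^n,A)$, i.e.\ is determined by a multilinear alternating gadget; the $n=1$ case in the text is the base of this phenomenon, and I expect the general case to follow by the same antisymmetrization bookkeeping, plausibly with an induction on $n$ peeling off one variable at a time (freezing the last $n-1$ slots and reducing to a statement about $2$- or $1$-variable functions where the cocycle identity forces additivity outright).

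The main obstacle will be the careful handling of the \emph{partiality} of the functions — everything is only defined when the total sum of arguments lies in $L$, and when one ``splits'' $f(a+b,\dots)=f(a,\dots)+f(b,\dots)$ one changes which intermediate partial sums occur even though the total is fixed; one must verify, using that $L$ is monoid-like and $L_0$ is full, that no term in the manipulation falls outside the domain, and that the vanishing hypothesis on $S_n(L\setminus L_0)$ is respected throughout. A secondary technical point is making the passage ``$e_n(n)f=f\Rightarrow f$ is alternating, hence degenerate tuples are killed'' fully rigorous in this combinatorial setting; I would cite or reprove the description of $e_n(n)$ as the (normalized) signed sum over $S_n$ and check that the argument tuples with a repeated block are annihilated, which is exactly what lets the non-telescoping error terms in the cocycle identity drop out.
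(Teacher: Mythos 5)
Your skeleton agrees with the paper's: since $C^{n-1}_{(n)}=0$ there are no coboundaries, so the claim reduces to identifying the $(n)$-cocycles with the multi-additive functions, and the easy inclusion is exactly the telescoping computation you describe. The gap is in the converse, which is the entire content of the proposition. A single instance of the cocycle identity does not yield additivity in a slot: already for $n=2$ it reads $f(\lam_2,\lam_3)-f(\lam_1+\lam_2,\lam_3)+f(\lam_1,\lam_2+\lam_3)-f(\lam_1,\lam_2)=0$, which is not the relation $f(\lam_1+\lam_2,\lam_3)=f(\lam_1,\lam_3)+f(\lam_2,\lam_3)$; and your appeal to ``terms with a repeated or degenerate argument pattern being killed by $e_n(n)$'' has no purchase here, because no term of $df$ has a repeated argument. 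What is missing is the specific linear combination of permuted cocycle identities that isolates the additivity defect. The paper supplies it: summing $\pm\, df(\lam_{\sigma^{-1}(1)},\ldots,\lam_{\sigma^{-1}(n+1)})$ over all $\sigma\in S_{n+1}$ with $\sigma(1)<\sigma(2)$ and using that $f\in C^n_{(n)}$ is alternating, everything cancels (as in Loday's Proposition 1.3.12) except $n!\,\bigl(f(\lam_1,\lam_3,\ldots)+f(\lam_2,\lam_3,\ldots)-f(\lam_1+\lam_2,\lam_3,\ldots)\bigr)$, whence additivity in the first slot, and skew-symmetry transports it to every slot. Your fallback of ``induction on $n$, freezing the last $n-1$ slots'' is not obviously available either, since the differential mixes all slots and the frozen one-variable function inherits no cocycle condition of its own.

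Your remaining points are sound but peripheral: the domain bookkeeping (that the splitting $f(a+b,\ldots)=f(a,\ldots)+f(b,\ldots)$ never leaves $S_n(L)$, using that $L$ is monoid-like and $L_0$ full) and the identification of $C^n_{(n)}$ with alternating cochains via $e_n(n)$ are genuine things to check, which the paper also elides, and your HKR analogy is the right intuition. But until you exhibit the averaging identity over the coset $\{\sigma:\sigma(1)<\sigma(2)\}$ (or an equivalent combination), the substantive direction remains unproved.
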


\begin{proof}
That every multi-additive function $f\in C^n_{(n)}(L,L\setminus L_0;k)$ satisfies $df=0$ is obvious by definition of $d$. 
For the other direction we use the following computation:
\begin{eqnarray*}\label{lod eq}
&\sum_{\sigma}df(\lam_{\sigma^{-1}(1)},...,\lam_{\sigma^{-1}(n+1)})=\\
&n!\big(f(\lam_1,\lam_3,\lam_4,...,\lam_{n+1})+f(\lam_2,\lam_3,\lam_4,...,\lam_{n+1})-f(\lam_1+\lam_2,\lam_3,\lam_4,...,\lam_{n+1})\big),
\end{eqnarray*}
where the sum is taken over all permutations $\sigma\in S_{n+1}$ such that $\sigma(1)<\sigma(2)$ (similarly as in the proof of Loday \cite[Proposition 1.3.12]{lod}).
\end{proof}

The next Proposition will give us very useful formulas for $H^n_{(n)}(K_{\tau}^R;k)$.

\begin{proposition}\label{prop span}
Let $\tau\leq \sigma$ be a smooth face. The injections $\bar{C}^n_{(n)}(\sp_kK_{\tau}^R;k)\to \bar{C}^n_{(n)}(K_{\tau}^R;k)$ are isomorphisms. Moreover, $\sp_kK_{\tau}^R=\cap_{a_j\in \tau}\sp_kK^R_{a_j},$ and we have 
$$
\sp_kK_{a_j}^R=
\left\{
\begin{array}{ll}
0&\text{ if }\lan a_j,R \ran \leq 0 \\
(a_j)^{\perp} &\text{ if }\lan a_j,R \ran=1\\
M\otimes_{\ZZ}k& \text{ if }\lan a_j,R \ran \geq 2
\end{array}
\right.
$$
\end{proposition}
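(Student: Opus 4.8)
The plan is to prove the three claims in order: first the structural identity $\sp_kK_{\tau}^R=\cap_{a_j\in\tau}\sp_kK^R_{a_j}$, then the explicit formula for $\sp_kK_{a_j}^R$, and finally the fact that the inclusion $K_{\tau}^R\hookrightarrow \sp_kK_{\tau}^R$ induces an isomorphism $\bar{C}^n_{(n)}(\sp_kK_{\tau}^R;k)\xrightarrow{\sim}\bar{C}^n_{(n)}(K_{\tau}^R;k)$ on multi-additive cochains. The middle computation is the easiest: by the bullet points preceding Equation \eqref{eq con set}, $K_{a_j}^R=\{r\in\Lambda~|~\lan a_j,r\ran<\lan a_j,R\ran\}$, so if $\lan a_j,R\ran\leq 0$ this set is empty (as $\Lambda\subset\sigma^{\vee}$ forces $\lan a_j,r\ran\geq 0$), hence its $k$-span is $0$; if $\lan a_j,R\ran=1$ then $K_{a_j}^R=\{r\in\Lambda~|~\lan a_j,r\ran=0\}=\Lambda\cap a_j^{\perp}$, which spans $a_j^{\perp}\subset M\otimes k$ because $\tau=\sigma\cap a_j^{\perp}$ being a face of full dimension $N-1$ in that hyperplane gives enough lattice points; and if $\lan a_j,R\ran\geq 2$, then $K_{a_j}^R$ contains $0$ together with every primitive lattice vector in $\sigma^{\vee}$ pairing to $1$ with $a_j$, and since $a_j$ is primitive these span all of $M\otimes k$ (here I use smoothness of the ambient picture only insofar as $\Lambda$ is saturated). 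The inclusion $\sp_kK_{\tau}^R\subset\cap_{a_j\in\tau}\sp_kK^R_{a_j}$ is immediate from $K_{\tau}^R=\cap_{a_j\in\tau}K_{a_j}^R$; for the reverse inclusion I use that $\tau$ is \emph{smooth}, so the generators $a_j\in\tau$ form part of a $\ZZ$-basis of $N$, which lets me intersect the explicit spans computed above and check they already equal $\sp_kK_{\tau}^R$ by exhibiting enough lattice points of $K_{\tau}^R$ (split a basis-adapted coordinate system into the directions constrained by $R$ and the free directions).

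For the isomorphism of multi-additive cochains, the point is that an element of $\bar{C}^n_{(n)}(K_{\tau}^R;k)$ is a function $S_n(K_{\tau}^R)\to k$ which is additive in each slot whenever the total sum stays in $K_{\tau}^R$; I want to show it extends uniquely to an additive-in-each-slot function on all of $\sp_kK_{\tau}^R$. Injectivity of the restriction map is clear once I know $K_{\tau}^R$ generates $\sp_kK_{\tau}^R$ as a $k$-vector space \emph{and} that there are enough ``local'' additivity relations inside $K_{\tau}^R$ to pin down the values — which is exactly where I'll use that $K_{\tau}^R$, being $\Lambda\cap(R-\mathrm{int}\,\tau^{\vee})$ with $\tau$ smooth, is a large enough ``truncated cone'' that any lattice point of $\sp_kK_{\tau}^R$ is a $\ZZ$-linear combination of elements of $K_{\tau}^R$ using only intermediate partial sums that also lie in $K_{\tau}^R$. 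Surjectivity: given $\bar f\in\bar{C}^n_{(n)}(\sp_kK_{\tau}^R;k)$ I restrict; given $f\in\bar{C}^n_{(n)}(K_{\tau}^R;k)$ I define the extension slot-by-slot by writing each argument as a difference/combination of elements of $K_{\tau}^R$ and using the additivity relations, then check well-definedness (independence of the chosen representation) using the relations valid in $K_{\tau}^R$. Concretely, after a $\ZZ$-basis change adapted to $\tau$ one may take $\sp_kK_{\tau}^R$ spanned by standard basis vectors $e_1,\dots,e_m$ where the $e_i$ themselves (and all partial sums $e_{i_1}+\cdots+e_{i_r}$ relevant to additivity) can be arranged to lie in $K_{\tau}^R$ by translating deep into the cone; multi-additivity then reduces $\bar f$ to its values on tuples of basis vectors, and these are freely prescribable, matching the restriction.

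The main obstacle I expect is the surjectivity/well-definedness part of the last isomorphism: one must be careful that the local additivity relations available inside $K_{\tau}^R$ — where \emph{all} partial sums, not just the total, must remain in $K_{\tau}^R$ — are genuinely sufficient to reconstruct a globally multi-additive function, and that the reconstruction does not depend on how one writes a given vector as a combination of elements of $K_{\tau}^R$. This is the step that truly uses the smoothness of $\tau$: smoothness guarantees $\tau^{\vee}$ is a simplicial cone generated by a basis, so $K_{\tau}^R$ is a product-like region $\{r~|~\lan a_j,r\ran < \lan a_j,R\ran \text{ for } a_j\in\tau\}$ with the remaining coordinates unconstrained, and one can always shift arguments by elements deep in the cone direction (unconstrained coordinates) to make every partial sum land in $K_{\tau}^R$ without changing the value, by multi-additivity. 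Once this ``shifting into generic position'' lemma is in place, the bookkeeping is routine. I would state that shifting step as a small sub-lemma and then assemble the three claims as above.
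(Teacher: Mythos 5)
Your overall strategy coincides with the paper's: use smoothness of $\tau$ to produce lattice elements adapted to the generators of $\tau$, compute the spans explicitly, and show that a multi-additive function on $K^R_\tau$ extends uniquely to a multilinear one on $\sp_kK^R_\tau$. The paper reduces to $n=2$ (citing \cite{klaus} for $n=1$ and for the span identities), chooses $r_1,\dots,r_l\in\Lambda$ with $\lan r_i,a_k\ran=\delta_{ik}$, writes each $s\in K^R_\tau$ as $\sum_i\lan a_i,s\ran r_i+p$ with $p\in\tau^{\perp}\cap M$, and defines the extension by the resulting multilinear expression, checking that $\sum_{v,w}f(s_v,s_w)$ depends only on $\sum_v s_v$ and $\sum_w s_w$. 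That is essentially your plan, and your treatment of the span formulas and of the identity $\sp_kK^R_\tau=\cap_{a_j\in\tau}\sp_kK^R_{a_j}$ is fine (the paper gets the latter from $\cap_{j>l}(a_j)^{\perp}=\sp_k(\tau^{\perp},r_1,\dots,r_l)$).

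The one point to correct is the mechanism you propose for the step you yourself flag as delicate. You say one can ``shift arguments by elements deep in the cone direction to make every partial sum land in $K_{\tau}^R$ without changing the value.'' Two problems: (i) shifting an argument by $q$ does change the value, by terms $f(q,\cdot,\dots)$ --- what you actually exploit is the additivity relation involving the shift, not invariance; and (ii) shifting in the unconstrained ($\tau^{\perp}$) directions leaves the pairings $\lan a_i,\cdot\ran$ untouched, so it can never place an element such as $2r_i$ (when $\lan a_i,R\ran=2$) inside $K^R_\tau$; the values of the extension at such points must be \emph{defined} by multilinearity rather than recovered inside $K^R_\tau$. What makes the consistency check go through is that for $s\in K^R_\tau$ one has $\lan a_i,s\ran<\lan a_i,R\ran$, so every partial sum occurring in the decomposition $s=\sum_i\lan a_i,s\ran r_i+p$ pairs with each $a_i$ to at most $\lan a_i,s\ran$ and hence automatically satisfies the defining inequalities of $K^R_\tau$; the shifting trick is only needed to push the $\tau^{\perp}$-component $p$ into $\Lambda$. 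With that correction your sketch closes in the same way as the paper's proof.
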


\begin{proof}
The case $n=1$ was proved in \cite[Proposition 4.2]{klaus}.
We will generalize it to the case $n=2$. The generalization to other $n$ is then immediate.

Let $f\in \bar{C}^2_{(2)}(K_{\tau}^R;k)$. We want to show that $f\in \bar{C}^2_{(2)}(\sp_kK_{\tau}^R;k)$. 
Without loss of generality we can assume that $\tau=\langle a_1,...,a_m \rangle$, with $\langle a_i,R \rangle \geq 2$ for $i=1,...,l$ and $\langle a_j,R \rangle=1$ for $j=l+1,...,m$, since if $R$ was non-positive on any of the generators of $\tau$, then $K_{\tau}^R$ would be empty.

By the smoothness of $\tau$ there exist elements $r_1,...,r_l$ such that $\langle r_i,a_k \rangle=\delta_{ik}$ for $1\leq i \leq l$ and $1\leq k\leq m$. Hence it holds that
$$f(s_{v},s_{w})=\sum_{i=1}^l\sum_{u=1}^l\langle a_{i},s_{v}\rangle \langle a_{u},s_{w}\rangle f(r_{i},r_{u})+f(p_{v},p_{w}),$$
for $s_v,s_w\in K_{\tau}^R$, $p_v:=s_v-\sum_{i=1}^l\langle a_i,s_v \rangle r_i\in \tau^{\perp}\cap M$ and 
$p_w:=s_w-\sum_{i=1}^l\langle a_i,s_w \rangle r_i\in \tau^{\perp}\cap M$.
We can easily show that $\sum_v\sum_wf(s_{v},s_{w})$ does depend only on $s_1:=\sum_vs_v$ and $s_2:=\sum_ws_w$, and not on the summands themselves. Then, $f(s_1,s_2)$ may be defined as this value. The second claim follows as in \cite{klaus} by $\cap_{a_i\in \tau}\sp_kK_{a_i}^R=\cap_{j=l+1}^k(a_j)^{\perp}=\sp_k(\tau^{\perp},r_1,...,r_l)=\sp_kK_{\tau}^R$.
\end{proof}

We write shortly $M_k$ (resp.\ $N_k$) for $M\otimes_{\ZZ}k$ (resp.\ $N\otimes_{\ZZ}k$).

\begin{remark}\label{poisson remark}
Note that $0$ and $1$-dimensional faces are always smooth. For $\tau=0$ we obtain that $\bar{C}^i_{(i)}(\Lambda;k)\cong \bar{C}^i_{(i)}(\sp_k\Lambda;k)\cong \bar{C}^i_{(i)}(M_k;k)$. Thus if $\sigma=\lan a_1,...,a_N\ran\subset M_{k}\cong k^n$, then $f\in \bar{C}^i_{(i)}(\Lambda;k)$ is completely determined by the values $f(s_{k_1},...,s_{k_i})$, for $1\leq k_1<\cdots <k_i\leq n$, where $s_1,...,s_n\in \Lambda$ are linearly independent ($k$-basis in $k^n$). 
\end{remark}

Let $E$ be the minimal set that generates the semigroup $\Lambda:=\sigma^{\vee}\cap M$. We write $E_j^R:=E\cap K^R_{a_j}$, $E_{jk}^R:=E\cap K^R_{a_j}\cap K^R_{a_k}$ for a $2$-face $\lan a_j,a_k\ran\leq \sigma$ and $E_{\tau}^R:=\cap_{a_j\in \tau}E_j^R$ for faces $\tau\leq\sigma$.

\begin{theorem}\label{th import}
Let $X=\spec(A)$ be an affine toric variety that is smooth in codimension $d$. Let $i\geq 1$ be a fixed integer. Then $k$-th cohomology group of the complex 
$$0\to \bar{C}^i_{(i)}(M_k;k)\to \oplus_{j}\bar{C}^i_{(i)}(\sp_kE^R_j;k)\to \cdots \to \oplus_{\tau\leq \sigma,\dim \tau=d+1}\bar{C}^i_{(i)}(\sp_kE^R_{\tau};k)$$
equals $T^{k,-R}_{(i)}(A)$, for $k=0,...,d$ ($\bar{C}^i_{(i)}(M_k;k)$ is the degree $0$ term).

Moreover, if $X$ is an isolated singularity (i.e.\ $\dim(X)=d+1$), then 
$$T^{k,-R}_{(i)}(A)=
\left \{
\begin{array}{ll}
\coker\big(\oplus_{\tau\leq \sigma,\dim \tau=d}\bar{C}^i_{(i)}(K^R_{\tau};k)\to \bar{C}^i_{(i)}(K^R_{\sigma};k)\big) & \text{ if }k=\dim(X)\\
H^{k-\dim(X)+i}_{(i)}(K_{\sigma}^R;k) & \text{ if }k\geq \dim(X)+1
\end{array}
\right.
$$
\end{theorem}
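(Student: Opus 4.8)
The plan is to build on the machinery already set up: by Proposition \ref{total} the group $T^{n-i,-R}_{(i)}(A)$ is the $n$-th cohomology of the total complex of the double complex $C_{(i)}^{\kbb}(K^R_{\kbb};k)$, so I want to run the spectral sequence from Corollary \ref{second spectral} and then identify the $E_1$-terms using Proposition \ref{smooth}, Proposition \ref{prop span} and Proposition \ref{multi additive}. First I would observe that since $X$ is smooth in codimension $d$, every face $\tau\leq\sigma$ with $\dim\tau\leq d$ is a smooth face; hence by Proposition \ref{smooth} we have $H^q_{(i)}(K^R_{\tau};k)=0$ for $q\geq i+1$ whenever $\dim\tau\leq d$, and by Proposition \ref{multi additive} together with Proposition \ref{prop span} we get $H^i_{(i)}(K^R_{\tau};k)=\bar C^i_{(i)}(K^R_{\tau};k)\cong\bar C^i_{(i)}(\sp_kK^R_{\tau};k)=\bar C^i_{(i)}(\sp_kE^R_{\tau};k)$ for those $\tau$ (the last identification because $K^R_{\tau}$ and $E^R_{\tau}$ span the same $k$-subspace; here one should check carefully that $\sp_k K^R_\tau = \sp_k E^R_\tau$, using that $E$ generates $\Lambda$). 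Thus in the range $p\leq d$ the $E_1$-page has a single nonzero row $q=i$, whose $p$-th entry is exactly $\oplus_{\dim\tau=p}\bar C^i_{(i)}(\sp_k E^R_\tau;k)$, and the $d_1$-differential is the Čech-type differential $\delta^p$ of the double complex. A single nonzero row means the spectral sequence degenerates in that range, so for $k=0,\dots,d$ the group $T^{k,-R}_{(i)}(A)=H^{k+i,-R}_{(i)}(A)$ is computed as the $k$-th cohomology of the complex in the statement, starting with the degree-$0$ term $\bar C^i_{(i)}(\Lambda;k)\cong\bar C^i_{(i)}(M_k;k)$ (Remark \ref{poisson remark}).

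For the isolated-singularity case, $\dim X=d+1$, so the only non-smooth face is $\sigma$ itself (which has dimension $d+1$). Now the $E_1$-page still has the single row $q=i$ for columns $p=0,\dots,d$, but in column $p=d+1$ the only face is $\sigma$ and there $H^q_{(i)}(K^R_\sigma;k)$ can be nonzero for all $q\geq i$. I would then read off the total cohomology degree by degree: in total degree $n=k+i$ with $k<d+1$ nothing changes and we recover the first formula; in total degree $n=(d+1)+i$, i.e.\ $k=\dim X$, the contributions are $H^i_{(i)}(K^R_\sigma;k)=\bar C^i_{(i)}(K^R_\sigma;k)$ sitting in bidegree $(d+1,i)$ together with the incoming $d_1$ from bidegree $(d,i)$, which is $\oplus_{\dim\tau=d}\bar C^i_{(i)}(K^R_\tau;k)\to\bar C^i_{(i)}(K^R_\sigma;k)$; since $\sigma$ is the top face there is no outgoing differential and no higher differential can hit or leave this spot, so the answer is the cokernel as claimed. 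For $k\geq\dim X+1$, i.e.\ total degree $n=k+i$ with $n\geq(d+1)+i+1$, the only bidegree contributing is $(d+1,\,n-d-1)$ with $n-d-1=k+i-d-1=(k-\dim X)+i\geq i+1$; all other columns vanish in this high a row, so $T^{k,-R}_{(i)}(A)=H^{k-\dim X+i}_{(i)}(K^R_\sigma;k)$, again with no room for differentials.

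The main obstacle I anticipate is the bookkeeping around the spectral sequence collapse: one must be sure that in each relevant total degree there is genuinely at most one surviving entry on $E_1$ (or, for $k=\dim X$, exactly the two adjacent entries linked by $d_1$ and nothing else), and that no higher differential $d_r$ for $r\geq 2$ can interfere — this is where the hypotheses "smooth in codimension $d$" and "isolated singularity" are doing real work, by forcing all the potentially-interfering entries of $E_1$ to vanish. A secondary technical point is the identification $H^i_{(i)}(K^R_\tau;k)=\bar C^i_{(i)}(\sp_k E^R_\tau;k)$ for smooth $\tau$: this chains Proposition \ref{multi additive} (no coboundaries in the $(i,i)$-part, so cohomology equals multi-additive cocycles) with Proposition \ref{prop span} (multi-additive functions only see the $k$-span), plus the elementary fact that $\sp_k K^R_\tau=\sp_k E^R_\tau$, which follows since every lattice point of $K^R_\tau$ is a non-negative combination of elements of $E$ none of which can leave the open set $R-\mathrm{int}\,\tau^\vee$. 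Everything else is diagram-chasing that the earlier propositions have already reduced to routine.
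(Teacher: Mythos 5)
Your proposal follows the paper's own proof essentially verbatim: it runs the spectral sequence of Corollary \ref{second spectral}, kills the rows $q\geq i+1$ in columns $p\leq d$ via Proposition \ref{smooth} (and $E_1^{p,q}=0$ for $p\geq d+2$ in the isolated case), and identifies the surviving row $H^i_{(i)}(K^R_\tau;k)\cong\bar{C}^i_{(i)}(\sp_kE^R_\tau;k)$ through Propositions \ref{multi additive} and \ref{prop span}. The only difference is that you make explicit two points the paper leaves implicit --- the degree-by-degree degeneration bookkeeping and the equality $\sp_kK^R_\tau=\sp_kE^R_\tau$ --- and both of your justifications are sound.
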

\begin{proof}
By Corollary \ref{second spectral} we have
$$E_1^{p,q}=\oplus_{\tau\leq \sigma,\dim \tau=p} H_{(i)}^q(K_{\tau}^R;k)\Rightarrow T_{(i)}^{p+q-i,-R}(A)= H_{(i)}^{p+q,-R}(A),$$
for $q\geq i$ and $p\geq 0$.
By the assumption $j$-dimensional faces are smooth for $j\leq d$. From Proposition \ref{smooth} it follows that $E_1^{0,q}=E_1^{1,q}=\cdots =E_1^{d,q}=0$, for $q\geq i+1$. Thus $E_2^{p,i}=E_{\infty}^{p,i}=\oplus_{\tau\leq \sigma,\dim \tau=p} H_{(i)}^i(K_{\tau}^R;k)$ for $d+1\geq p\geq 1$.
It follows that $T^{k,-R}_{(i)}(A)$ is the $k$-th cohomology group of the complex 
$$H^i_{(i)}(\Lambda;k)\to \oplus_{j}H^i_{(i)}(K^R_{a_j};k)\to \cdots \to \oplus_{\tau\leq \sigma,\dim \tau=d+1}H^i_{(i)}(K^R_{\tau};k).$$
We conclude the first part using the equality $K^R_{\tau}=\cap_{a_j\in \tau}K_{a_j}^R$ and Proposition \ref{prop span}. 

If $X$ is an isolated singularity then we also have $E_1^{p,q}=0$ for $p\geq d+2$. Thus $E_2^{d+1,q}=E_{\infty}^{d+1,q}=H^{q}_{(i)}(K_{\sigma}^R;k)$ for $q\geq i+1$,  
which finishes the proof.
\end{proof}

\begin{corollary}
Since toric varieties are normal and thus smooth in codimension 1, we obtain that
$T^1_{(i)}(-R)$ equals the cohomology group of the complex
\begin{equation}\label{pom sur}
\bar{C}^i_{(i)}(M_k;k)\to \oplus_{j}\bar{C}^i_{(i)}(\sp_kE^R_j;k)\to \oplus_{\lan a_j,a_k\ran<\sigma}\bar{C}^i_{(i)}(\sp_kE^R_{jk};k).
\end{equation}
\end{corollary}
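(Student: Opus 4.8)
The plan is to obtain this statement as an immediate specialization of Theorem~\ref{th import}. First I would record that every affine toric variety $X=\spec(k[\Lambda])$ is normal: the semigroup $\Lambda=\sigma^{\vee}\cap M$ is saturated, so $\spec(k[\Lambda])$ is normal by \cite[Theorem 1.3.5]{cox}, as already noted in Section~2. By Serre's criterion a normal variety is regular in codimension one, so the singular locus of $X$ has codimension at least two; in the terminology of Theorem~\ref{th import} this says that $X$ is smooth in codimension $d=1$.

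Next I would apply the first assertion of Theorem~\ref{th import} with $d=1$. It states that for $k=0,\dots,d$ the group $T^{k,-R}_{(i)}(A)$ is the $k$-th cohomology group of the complex
$$0\to \bar{C}^i_{(i)}(M_k;k)\to \oplus_{j}\bar{C}^i_{(i)}(\sp_kE^R_j;k)\to \oplus_{\tau\leq \sigma,\dim \tau=2}\bar{C}^i_{(i)}(\sp_kE^R_{\tau};k),$$
with $\bar{C}^i_{(i)}(M_k;k)$ placed in degree $0$. Taking $k=1=d$, the relevant cohomology is the one computed at the middle term. The two-dimensional faces $\tau\leq\sigma$ are exactly the cones $\lan a_j,a_k\ran<\sigma$ spanned by two of the primitive generators of $\sigma$, and for such a $\tau$ one has $E^R_{\tau}=E^R_j\cap E^R_k=E^R_{jk}$ by the definitions preceding Theorem~\ref{th import}. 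Hence the displayed complex is precisely
$$\bar{C}^i_{(i)}(M_k;k)\to \oplus_{j}\bar{C}^i_{(i)}(\sp_kE^R_j;k)\to \oplus_{\lan a_j,a_k\ran<\sigma}\bar{C}^i_{(i)}(\sp_kE^R_{jk};k),$$
and its cohomology at the middle term equals $T^{1,-R}_{(i)}(A)=T^1_{(i)}(-R)$, which is the claim.

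Since the corollary is a direct consequence of Theorem~\ref{th import}, I do not anticipate any genuine obstacle. The only points worth a line of care are the purely combinatorial identification of the $2$-dimensional faces of $\sigma$ with the unordered pairs of primitive generators, and the matching of the indexing convention $E^R_{\tau}$ with the shorthand $E^R_{jk}$ fixed just before the theorem; both are tautological. One should also note the degenerate case $\dim\sigma\leq 1$, in which $\sigma$ has no $2$-dimensional face, the last term of the complex vanishes, $X$ is smooth, and the equality $T^1_{(i)}(-R)=0$ holds on both sides.
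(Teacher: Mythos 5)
Your argument is exactly the paper's: the corollary is stated without separate proof precisely because it is the specialization of Theorem~\ref{th import} to $d=1$, justified by normality of toric varieties (saturation of $\Lambda$, hence regularity in codimension one), with the $2$-dimensional faces identified with the pairs $\lan a_j,a_k\ran$ and $E^R_{\tau}$ with $E^R_{jk}$. The proposal is correct and adds only the routine bookkeeping the paper leaves implicit.
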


\subsection{Toric surfaces}
We want to obtain the dimension of $k$-vector spaces $T^{1,-R}_{(i)}(A)$, for all $i\in \NN$, in the case when $A$ is a two-dimensional cyclic quotient singularity (a two-dimensional affine toric variety). Let $X(n,q)$ denote the quotient by the $\mathbb{Z}/n\ZZ$-action $\xi\to \left( {\begin{array}{cc}
  \xi  & 0 \\ 0 & \xi^{q}      \end{array} } \right),$
$(\xi=\sqrt[n]{1})$. $X(n,q)$ is given by the cone $\sigma=\lan a_1,a_2\ran=\lan (1,0),(-q,n)\ran$.
We can develop $\frac{n}{n-q}$ into a continued fraction  $[b_1;b_2,...,b_r]$,
$b_i\geq 2$.
Then $E$ is given as the set $E=\{w^0,...,w^{r+1}\}$, with elements $w^{i}\in \ZZ^2$ and 

\begin{enumerate}
\item $w^0=(0,1)$, $w^1=(1,1)$, $w^{r+1}=(n,q)$,
\item $w^{i-1}+w^{i+1}=b_i\cdot w^{i}$ (i=1,...,r).

\end{enumerate}

Now we compute $T^{1,-R}_{(i)}(A)$ for toric surfaces $A=A(n,q):=k[\lan w^0,w^{r+1}\ran \cap M]$.

\begin{proposition}\label{prop 2}
For $i>2$ we have $\dim T^{1,-R}_{(i)}(A)=0$. Otherwise we have 
$$\dim_kT^{1,-R}_{(i)}(A)=$$ $$\dim_k \bar{C}^i_{(i)}(\sp_kE^R_1;k)+\dim_k\bar{C}^i_{(i)}(\sp_kE^R_2;k)-\dim_k \bar{C}^i_{(i)}(\sp_kE^R_{12};k)-c_i,$$
where 
$$
c_i:=\left\{
\begin{array}{ll}
2=\dim_k \bar{C}^1_{(1)}(M_k;k)&\text{ if }i=1\\
1=\dim_k \bar{C}^2_{(2)}(M_k;k)&\text{ if }i=2 
\end{array}
\right.
$$
\end{proposition}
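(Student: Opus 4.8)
The plan is to specialize the corollary following Theorem \ref{th import} to the two-dimensional case, where $d=1$ (surfaces are smooth away from the origin, and for a genuine cyclic quotient singularity the origin is the only singular point, so $X$ is an isolated singularity of dimension $d+1=2$). Under these hypotheses the complex \eqref{pom sur} computing $T^{1,-R}_{(i)}(A)$ is
\[
\bar{C}^i_{(i)}(M_k;k)\to \bar{C}^i_{(i)}(\sp_kE^R_1;k)\oplus\bar{C}^i_{(i)}(\sp_kE^R_2;k)\to \bar{C}^i_{(i)}(\sp_kE^R_{12};k),
\]
since $\sigma=\lan a_1,a_2\ran$ has exactly two rays $a_1,a_2$ and a single two-dimensional face. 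I would first observe that $T^{1,-R}_{(i)}(A)$ is the middle cohomology, so its dimension is $\dim_k$ of the middle term minus the rank of the first map minus the rank of the second map.

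The vanishing claim for $i>2$ is the easy half: $M_k\cong k^2$, so by Remark \ref{poisson remark} an element of $\bar{C}^i_{(i)}(M_k;k)$ is determined by its values $f(s_{k_1},\dots,s_{k_i})$ on strictly increasing tuples from a basis of a $2$-dimensional space; there are no such tuples once $i>2$, hence $\bar{C}^i_{(i)}(M_k;k)=0$. The same reasoning applies to every term $\bar{C}^i_{(i)}(\sp_kE^R_\bullet;k)$ because each $\sp_kE^R_\tau$ is a subspace of $M_k$, hence has dimension $\leq 2$; so for $i>2$ the entire complex \eqref{pom sur} is zero and $T^{1,-R}_{(i)}(A)=0$. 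This disposes of the first assertion.

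For $i\in\{1,2\}$ I must compute the two ranks. The key point is that both maps are injective on the relevant pieces. The first map $\bar{C}^i_{(i)}(M_k;k)\to \bar{C}^i_{(i)}(\sp_kE^R_1;k)\oplus\bar{C}^i_{(i)}(\sp_kE^R_2;k)$ is the restriction map: a multi-additive function on $M_k$ that restricts to zero on both $\sp_kK^R_{a_1}$ and $\sp_kK^R_{a_2}$ (using Proposition \ref{prop span}, which identifies $\bar{C}^i_{(i)}(\sp_kE^R_j;k)$ with $\bar{C}^i_{(i)}(\sp_kK^R_{a_j};k)$ and computes the latter spaces). One checks that for $R$ with $K^R_\sigma$ nonempty — the degrees that contribute — the subspaces $\sp_kK^R_{a_1}$ and $\sp_kK^R_{a_2}$ together span $M_k$ (their intersection being $\sp_kK^R_{12}$, of dimension $2-(\text{number of }a_j\text{ on which }\langle a_j,R\rangle=1)$ or less), and multi-additivity forces a function vanishing on two spanning subspaces to vanish; hence the first map is injective, contributing rank $c_i=\dim_k\bar{C}^i_{(i)}(M_k;k)$, which is $2$ for $i=1$ and $1$ for $i=2$. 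The second map $(f_1,f_2)\mapsto f_1|_{\sp_kE^R_{12}}-f_2|_{\sp_kE^R_{12}}$ (as in the Example before Lemma \ref{acyclic lem}) is surjective: given any $g\in\bar{C}^i_{(i)}(\sp_kE^R_{12};k)$, since $\sp_kK^R_{12}\subseteq \sp_kK^R_{a_1}$ one can extend $g$ multi-additively (choosing values arbitrarily on a complement) to an element $f_1$ with $f_1|_{\sp_kE^R_{12}}=g$ and take $f_2=0$. Therefore the second map has rank $\dim_k\bar{C}^i_{(i)}(\sp_kE^R_{12};k)$.

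Combining, $\dim_k T^{1,-R}_{(i)}(A)=\dim_k\bar{C}^i_{(i)}(\sp_kE^R_1;k)+\dim_k\bar{C}^i_{(i)}(\sp_kE^R_2;k)-\dim_k\bar{C}^i_{(i)}(\sp_kE^R_{12};k)-c_i$, as claimed. The main obstacle I anticipate is the surjectivity of the second map and, dually, the injectivity of the first: both rely on the fact that a multi-additive function on a subspace $V\subseteq M_k$ always extends to a multi-additive function on $M_k$ with prescribed behaviour — which is true because one may split $M_k=V\oplus V'$ and define the extension on mixed tuples by multilinearity from a free choice of values — together with the geometric fact that the $\sp_kK^R_{a_j}$ span $M_k$ in the relevant degrees. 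I would handle the spanning statement by the case analysis on the values $\langle a_j,R\rangle\in\{\le 0,\,1,\,\ge 2\}$ already set up in Proposition \ref{prop span}, noting that $K^R_\sigma\neq\emptyset$ rules out $\langle a_j,R\rangle\le 0$ for the generators of a face meeting $K^R_\sigma$.
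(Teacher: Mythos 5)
Your strategy is the same as the paper's: read Proposition~\ref{prop 2} off the three--term complex \eqref{pom sur}, kill the case $i>2$ because every $\sp_kE^R_{\bullet}$ sits inside the two--dimensional space $M_k$ so that $\bar{C}^i_{(i)}$ vanishes, and observe that the last map is surjective since an alternating multi-additive form on $\sp_kE^R_{12}\subseteq\sp_kE^R_1$ extends to $\sp_kE^R_1$. Those parts are correct, and they are essentially everything the paper's (one-line) proof asserts.

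The gap is in your claim that the first map of \eqref{pom sur} is injective, i.e.\ that its rank equals $c_i=\dim_k\bar{C}^i_{(i)}(M_k;k)$. The step ``multi-additivity forces a function vanishing on two spanning subspaces to vanish'' is valid only for $i=1$. For $i=2$ the elements of $\bar{C}^2_{(2)}$ are antisymmetric, and the restriction of an antisymmetric biadditive form to a one-dimensional subspace is identically zero; so whenever $\lan a_1,R\ran=\lan a_2,R\ran=1$ (this happens, e.g.\ $R=w^1$ for the $A_n$-singularity, where $r=1$ and $\lan a_2,w^1\ran=1$), the two spaces $\sp_kK^R_{a_j}$ are distinct lines that span $M_k$, yet the generator of $\bar{C}^2_{(2)}(M_k;k)$ restricts to zero on both, so the first map has rank $0$ rather than $1$. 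Your restriction to degrees with $K^R_\sigma\neq\emptyset$ does not exclude this case, since there $K^R_\sigma=\{0\}$. Moreover this is not a repairable detail of your argument alone: in exactly these degrees (and, for $i=1$, whenever $\sp_kK^R_{a_1}+\sp_kK^R_{a_2}\neq M_k$, e.g.\ some $\lan a_j,R\ran\le 0$) the right-hand side of the displayed formula is negative while $T^{1,-R}_{(i)}(A)=0$, so the formula must be understood with the truncation $\max\{0,\cdot\}$ that the author does write in Proposition~\ref{t1i} and uses silently in the last bullet of Corollary~\ref{cor t2}. A correct write-up should either insert that truncation, or establish injectivity of the first map only where it actually holds (for $i=2$: some $\lan a_j,R\ran\ge 2$; for $i=1$: both $\lan a_j,R\ran\ge 1$) and verify directly that both sides are zero after truncation in the remaining degrees.
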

\begin{proof}
Follows immediately from \eqref{pom sur}, where in this case the last map is surjective.
\end{proof}

\begin{corollary}\label{cor t2}
For $T^1_{(1)}(A)$ we obtain the same results as Pinkham \cite{pin}.
Focusing on $T^{1,-R}_{(2)}(A)$, 
there are four different cases for the multidegree $R\in M\cong \ZZ^2$:

\begin{itemize}
\item $R=w^1$ (or analogously $R=w^r$). We obtain $E_1=\{w^0\}$ and $E_2=\{w^2,...,w^{r+1}\}$. We have $$\dim_k \bar{C}^2_{(2)}(\sp_kE^R_1;k)=\dim_k\bar{C}^2_{(2)}(\sp_kE^R_{12};k)=0$$ and thus Proposition \ref{prop 2} yields $T^{1,-R}_{(2)}(A)=0$. 

\item $R=w^{i}$ $(2\leq i \leq r-1)$. We obtain $E_1=\{w^0,...,w^{i-1}\}$ and $E_2=\{w^{i+1},...,w^{r+1}\}$. We have $\dim_k\bar{C}^2_{(2)}(\sp_kE^R_{12};k)=0$, $$\dim_k\bar{C}^2_{(2)}(\sp_kE^R_{1};k)=\dim_k\bar{C}^2_{(2)}(\sp_kE^R_{2};k)=1$$ and Proposition \ref{prop 2} yields $\dim_kT^{1,-R}_{(2)}(A)=1$.

\item $R=l\cdot w^{i} (1\leq i\leq r, 2\leq l\leq b_i$ for $r\geq 2$, or $i=1,2\leq l\leq b_1$ for $r=1$). We obtain $E_1=\{w^0,...,w^{i}\}$ and $E_2=\{w^{i},...,w^{r+1}\}$.  
We have $\dim_k\bar{C}^2_{(2)}(\sp_kE^R_{12};k)=0$, $$\dim_k\bar{C}^2_{(2)}(\sp_kE^R_{1};k)=\dim_k\bar{C}^2_{(2)}(\sp_kE^R_{2};k)=1$$
and thus Proposition \ref{prop 2} yields $\dim_kT^{1,-R}_{(2)}=1$.

\item For the remaining $R\in M$, either $E_1\subset E_2$ or $E_2\subset E_1$ or $\#(E_1\cap E_2)\geq 2$. In these cases it holds that either $\dim_k\bar{C}^2_{(2)}(\sp_kE^R_{i};k)=0$ for some $i$, or we have $\dim_k\bar{C}^2_{(2)}(\sp_kE^R_{12};k)\ne 0$. Thus in all these cases Proposition \ref{prop 2} yields $\dim_kT^{1,-R}_{(2)}(A)=0$. 

\end{itemize}
\end{corollary}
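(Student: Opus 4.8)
The plan is to derive everything from the surjectivity of the last map in \eqref{pom sur} together with the combinatorial description of $E_1^R$ and $E_2^R$ coming from the continued fraction expansion. First I would recall that for a toric surface $d=1$, so Theorem~\ref{th import} (equivalently the displayed complex \eqref{pom sur}) identifies $T^{1,-R}_{(i)}(A)$ as the cohomology in the middle spot of
$$\bar C^i_{(i)}(M_k;k)\xrightarrow{\ \alpha\ }\bar C^i_{(i)}(\sp_kE^R_1;k)\oplus \bar C^i_{(i)}(\sp_kE^R_2;k)\xrightarrow{\ \beta\ }\bar C^i_{(i)}(\sp_kE^R_{12};k),$$
where $\beta(f_1,f_2)=f_1-f_2$ (restriction, as in the Example after Lemma~\ref{acyclic lem}). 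Since $\beta$ is surjective by Proposition~\ref{prop 2}, the dimension of the cohomology is
$$\dim\ker\beta-\dim\im\alpha=\bigl(\dim\bar C^i_{(i)}(\sp_kE^R_1;k)+\dim\bar C^i_{(i)}(\sp_kE^R_2;k)-\dim\bar C^i_{(i)}(\sp_kE^R_{12};k)\bigr)-\dim\im\alpha,$$
so it remains to compute $\dim\im\alpha$. For this I would invoke Remark~\ref{poisson remark}: an element $f\in\bar C^i_{(i)}(M_k;k)$ is determined by its values on an $i$-tuple of basis vectors, and since $M_k\cong k^2$ here, $\bar C^i_{(i)}(M_k;k)=0$ for $i\geq 3$ (one cannot choose $3\leq k_1<\cdots<k_i\leq 2$), giving the vanishing $\dim T^{1,-R}_{(i)}(A)=0$ for $i>2$ immediately. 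For $i=1$ we have $\dim\bar C^1_{(1)}(M_k;k)=2$ and for $i=2$ we have $\dim\bar C^2_{(2)}(M_k;k)=\binom{2}{2}=1$; in both cases the restriction map $\alpha$ is injective (a multi-additive function on all of $M_k$ is zero once it vanishes on $\sp_kE^R_1$ or on $\sp_kE^R_2$ only if those spans are too small, but since in the relevant degrees at least one span is all of $M_k$ the map $\alpha$ is injective — and in the degenerate degrees both sides vanish anyway), so $\dim\im\alpha=c_i$ with $c_i$ as in the statement. This yields the displayed formula.

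For Corollary~\ref{cor t2}, the approach is a direct case analysis of $R\in M\cong\ZZ^2$ using the explicit description of $E=\{w^0,\dots,w^{r+1}\}$ and the recursion $w^{i-1}+w^{i+1}=b_iw^i$. For each multidegree $R$ I would first determine the sets $E_1^R=E\cap K^R_{a_1}$ and $E_2^R=E\cap K^R_{a_2}$ (these are the generators on which the linear functional $\langle a_j,\cdot\rangle$ is strictly smaller than $\langle a_j,R\rangle$), then compute the dimensions of $\sp_k E^R_1$, $\sp_k E^R_2$, $\sp_k E^R_{12}$ via Proposition~\ref{prop span} (which gives $0$, a hyperplane, or all of $M_k$ according to whether $\langle a_j,R\rangle$ is $\leq 0$, $=1$, or $\geq 2$), and finally plug into the formula of Proposition~\ref{prop 2}. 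The four bullet cases correspond to: (i) $R=w^1$ or $w^r$, where one of $E_1^R$, $E_2^R$ consists of a single lattice point so one span is $0$; (ii)(iii) $R=w^i$ for interior $i$ or $R=l\cdot w^i$ with $2\leq l\leq b_i$, where both $E_1^R$ and $E_2^R$ span exactly a line (their single ``shared'' generator $w^i$ lies on a boundary ray) while $E^R_{12}$ spans $0$, giving $1+1-0-1=1$; (iv) all remaining $R$, where either an inclusion $E_1^R\subset E_2^R$ (or vice versa) forces one span to be too small, or $\#(E_1^R\cap E_2^R)\geq 2$ forces $\sp_kE^R_{12}=M_k$, in either case making the alternating sum $0$. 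The identification of $T^1_{(1)}(A)$ with Pinkham's computation is then just the observation that the $i=1$ specialization of Proposition~\ref{prop 2} reproduces his formula for the cyclic quotient singularity.

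The main obstacle I anticipate is the bookkeeping in case (iv): one must verify that the enumeration of ``remaining'' $R$ is exhaustive and that in every such case the stated dichotomy (one span vanishes, or the intersection span is everything) genuinely holds. Concretely, one needs to check that if $R$ is not of one of the three special forms then either $\langle a_j,R\rangle\leq 0$ for some $j$, or $\langle a_j,R\rangle\geq 2$ for both $j$ together with $\#(E_1^R\cap E_2^R)\geq 2$, or a containment $E_1^R\subseteq E_2^R$ holds. This is a finite but slightly delicate combinatorial argument about which $w^i$ satisfy $\langle a_j,w^i\rangle<\langle a_j,R\rangle$, exploiting the convexity/monotonicity built into the continued fraction chain $w^0,\dots,w^{r+1}$; the geometric content is that the points of $E$ that are ``cut off'' by the hyperplane $\{\langle a_j,\cdot\rangle=\langle a_j,R\rangle\}$ form an initial (resp.\ terminal) segment of the chain, and a proper pair of such segments either nests or overlaps in length $\geq 2$ unless the cut lands exactly at $w^1$, $w^r$, or a multiple $l\cdot w^i$. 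Everything else is routine linear algebra via Propositions~\ref{prop span} and \ref{prop 2}.
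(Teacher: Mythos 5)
Your proposal follows the paper's own route exactly: Corollary \ref{cor t2} is obtained by determining $E_1^R$, $E_2^R$ and $E_{12}^R$ from the continued-fraction chain in each of the four cases and plugging the resulting span dimensions into the formula of Proposition \ref{prop 2} (whose derivation from the complex \eqref{pom sur} you also reproduce correctly, including the caveat about degenerate degrees where the first map fails to be injective). One small slip: in your cases (ii)--(iii) the sets $E_1^R$ and $E_2^R$ each contain at least two consecutive $w^j$'s and hence span all of $M_k$ rather than a line --- this is precisely what makes $\dim_k\bar{C}^2_{(2)}(\sp_kE^R_j;k)=1$ and yields your (correct) count $1+1-0-1=1$.
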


The following example shows that in the case of Gorenstein toric surfaces ($A_n$-singularities) the computations in this section agree with the computations in the previous section.

\begin{example}\label{ex 3}
Let $A=A(n+1,n)$ be a Gorenstein toric surface, given by the polynomial $f(x,y,z)=xy-z^{n+1}$ in $\AA^3$.
From Proposition \ref{prop red iso} we know that $\HH^3(A)\cong A/(\frac{\partial f}{\partial x},\frac{\partial f}{\partial y},\frac{\partial f}{\partial z})$, which has dimension equal to $n$ (Milnor number of the hypersurface). From Lemma \ref{lem kun} we have $\HH^3(A)\cong \oplus_{i=0}^2\e^i(\Omega^{3-i}_{A|k},A)$ and since $\e^2_{A}(\Omega_{A|k},A)=\h(\Omega^3_{A|k},A)=0$, we see that $\HH^3(A)\cong T^1_{(2)}(A)\cong\e^1(\Omega^2_{A|k},A)$ and thus $\dim_kT^1_{(2)}(A)=n$. Using Corollary \ref{cor t2} we can be even more precise: the cone for $A$ is given by $\sigma=\lan (1,0),(-n,n+1) \ran$. Its continued fraction has $r=1$, $b_1=n+1$ and thus we have $\dim_kT^{1,-R}_{(2)}(A)=1$ for the degrees $R=(2,2),...,(n+1,n+1)$ and  $\dim_kT^{1,-R}_{(2)}(A)=0$ for the other degrees.
\end{example}

\subsection{Higher dimensions} %In this subsection we explicitly calculate \red{......} for \red{.....} As far as we know the techniques that we use to obtain this calculations are new even in the case $i=1$. 
Let the cone $\sigma=\lan a_1,...,a_N\ran$ represent an $n$-dimensional toric variety $X_{\sigma}=\spec(A)$, $n\geq 3$. 
For $R\in M$ we define the affine space $$\AA(R):=\{a\in N_{\RR}~|~\langle a,R \rangle=1\}\subset N_{\RR}$$ and consider the polyhedron 
$Q(R):=\sigma\cap \AA(R)\subset \AA(R).$
Vertices of $Q(R)$ are $\bar{a}_j:=a_j/\lan a_j,R  \ran$, for all $j$ satisfying $\langle a_j,R \rangle\geq 1$. We denote $T^1_{(i)}(-R):=T^{1,-R}_{(i)}(A)$.

Altmann  \cite{alt3}, \cite{alt} relates the computation of $T^1_{(1)}(-R)$ with the convex geometry of $Q(R)$ (using Minkowski summands of $Q(R)$). 
We will develop another approach that will also allow us to compute $T^1_{(i)}(-R)$ for $i>1$. 
At the end we will obtain explicit formulas for $3$-dimensional toric varieties (see Proposition \ref{t1i}). As far as we know the techniques that we use to obtain this calculations are new even in the case $i=1$.  In this subsection we also obtain a formula for $T^1_{(i)}(-R)$ for affine cones over smooth toric Fano varieties in arbitrary dimension (see Theorem \ref{th fano}).

The following lemma will be useful.

\begin{lemma}\label{cyc sin}
Let $Y$ be a toric surface given by $\sigma=\lan a_1,a_2 \ran\subset N_{\RR}\cong \RR^2$.
We have 
$\dim_k\sp_kE^R_{12}=\max\{0,W_1(R)+W_2(R)-2-\dim_kT_{(1)}^{1,-R}(Y)\}$,
where 
$$W_j(R):=\left \{
\begin{array}{lr}
2& \text{ if }\lan a_j,R \ran>1\\ 
1& \text{ if }\lan a_j,R \ran=1\\
0& \text{ if }\lan a_j,R \ran\leq 0,
\end{array}
\right.
$$
\end{lemma}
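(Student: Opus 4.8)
The plan is to reduce the statement to a two-dimensional instance of Theorem \ref{th import} (more precisely, of its corollary), applied to the toric surface $Y$, and then to extract the dimension count from the resulting three-term complex. Since $Y$ is a normal surface, it is smooth in codimension $1$, so $d=1$ and $\dim Y = d+1 = 2$; hence $Y$ is an isolated (at most one-point) cyclic quotient singularity and the full force of the second part of Theorem \ref{th import} applies with $i=1$. For $i=1$ the complex \eqref{pom sur} reads
$$\bar{C}^1_{(1)}(M_k;k)\to \bar{C}^1_{(1)}(\sp_kE^R_1;k)\oplus \bar{C}^1_{(1)}(\sp_kE^R_2;k)\to \bar{C}^1_{(1)}(\sp_kE^R_{12};k),$$
since $\sigma=\lan a_1,a_2\ran$ has exactly one $2$-dimensional face, namely $\sigma$ itself, and $E^R_{12}=E^R_\sigma$. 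Here $\bar{C}^1_{(1)}(\sp_kV;k)$ is just the space of $k$-linear functionals on the $k$-span of $V$, so its dimension equals $\dim_k \sp_k V$.

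Next I would read off the dimensions of the first two spans from Proposition \ref{prop span} (valid since $0$- and $1$-dimensional faces are automatically smooth): $\dim_k\bar{C}^1_{(1)}(M_k;k) = \dim_k M_k = 2$, and $\dim_k \bar{C}^1_{(1)}(\sp_kE^R_j;k) = \dim_k \sp_k K^R_{a_j}$, which by the displayed formula in Proposition \ref{prop span} equals $0$, $1$, or $2$ according to whether $\lan a_j,R\ran \le 0$, $=1$, or $\ge 2$ — that is, exactly $W_j(R)$. (One should note $\sp_k E^R_j = \sp_k K^R_{a_j}$, since $E^R_j = E\cap K^R_{a_j}$ generates the monoid-like set $K^R_{a_j}$ and the $k$-span is unchanged; this is the same observation used implicitly in Theorem \ref{th import} when passing between the $E$-version and the $K$-version of \eqref{pom sur}.) Now the complex computes $T^{0,-R}_{(1)}(Y)$ in degree $0$, $T^{1,-R}_{(1)}(Y)$ in degree $1$, and $T^{2,-R}_{(1)}(Y)$ in degree $2$. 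Taking the alternating sum of dimensions of the terms against the alternating sum of the cohomology groups gives
$$\dim_k\sp_k E^R_{12} - \bigl(W_1(R)+W_2(R)\bigr) + 2 = \dim_k T^{2,-R}_{(1)}(Y) - \dim_k T^{1,-R}_{(1)}(Y) + \dim_k T^{0,-R}_{(1)}(Y).$$

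The remaining point is to show the right-hand side collapses to $-\dim_k T^{1,-R}_{(1)}(Y)$ when the left side is nonnegative, and to insert the $\max\{0,-\}$ otherwise. The term $T^{0,-R}_{(1)}(Y)$: by Proposition \ref{hodge iso} this is $\har^1(Y) = T^0_{(1)}(Y)$ in degree $-R$, which vanishes for $R\ne 0$ (and for $R=0$ one checks directly that $\dim_k\sp_k E^0_{12} = \dim_k\Lambda_{\mathbb R}\cap\{\lan a_1,\cdot\ran<0,\ \lan a_2,\cdot\ran<0\}$-span works out consistently; in fact $R=0$ gives $W_1=W_2=0$ and $K^0_{a_j}=\emptyset$, so both sides are $0$). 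The term $T^{2,-R}_{(1)}(Y)$: by the isolated-singularity clause of Theorem \ref{th import}, $T^{2,-R}_{(1)}(Y) = \coker(\bar{C}^1_{(1)}(M_k;k)\oplus\text{(faces of dim }1)\to \bar{C}^1_{(1)}(K^R_\sigma;k))$ — wait, more carefully, it is the cokernel of the last map in \eqref{pom sur}, i.e.\ the cokernel of $\bar{C}^1_{(1)}(\sp_kE^R_1;k)\oplus\bar{C}^1_{(1)}(\sp_kE^R_2;k)\to \bar{C}^1_{(1)}(\sp_kE^R_{12};k)$. So $\dim_k T^{2,-R}_{(1)}(Y)$ is precisely the failure of surjectivity of that last map. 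The hard part will be organizing this bookkeeping cleanly: one wants the identity $\dim_k\sp_k E^R_{12} = W_1(R)+W_2(R)-2-\dim T^{1,-R}_{(1)}(Y) + \dim T^{2,-R}_{(1)}(Y)$ and then to argue that whenever the naive count $W_1+W_2-2-\dim T^{1,-R}_{(1)}(Y)$ is $<0$ the cokernel term $\dim T^{2,-R}_{(1)}(Y)$ exactly absorbs the deficit so that $\sp_k E^R_{12}=0$, and whenever it is $\ge 0$ the cokernel vanishes. This last dichotomy is the real content: it amounts to saying the last map of \eqref{pom sur} is surjective except precisely when $\sp_k E^R_{12}$ is forced to be zero anyway, which one can verify by the same case analysis on the position of $R$ relative to the $w^i$ as in Corollary \ref{cor t2} (the cases $R=w^i$, $R=l\cdot w^i$, and the generic cases), or more slickly by noting that $K^R_\sigma = K^R_{a_1}\cap K^R_{a_2}$ and analyzing when a linear functional on $\sp_k K^R_\sigma$ extends to $\sp_k K^R_{a_1}$ and to $\sp_k K^R_{a_2}$ compatibly. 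I expect this surjectivity-versus-vanishing dichotomy to be the main obstacle; everything else is the Euler-characteristic formalism of the three-term complex.
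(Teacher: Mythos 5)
Your overall route is the same as the paper's: specialize the three-term complex \eqref{pom sur} to the surface $Y$, identify the dimensions of its terms as $2$, $W_1(R)+W_2(R)$ and $\dim_k\sp_kE^R_{12}$ via Proposition \ref{prop span}, and read the lemma off the Euler characteristic of that complex (the paper packages exactly this as Proposition \ref{prop 2} and cites it). However, two of your claims about the boundary terms of the Euler characteristic are incorrect, and together they would derail the plan as written.

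First, the ``surjectivity-versus-vanishing dichotomy'' you single out as the main obstacle does not exist: the last map of \eqref{pom sur} is \emph{always} surjective, simply because $\sp_kE^R_{12}\subseteq\sp_kE^R_1$ and every linear functional on a subspace extends to the ambient space. Hence $T^{2,-R}_{(1)}(Y)=0$ unconditionally, and the cokernel can never ``absorb the deficit''. Second, the claim that $T^{0,-R}_{(1)}(Y)$ vanishes for $R\ne 0$ is false: $T^{0}_{(1)}(Y)\cong\der_k(A)$, and its degree $-R$ part is the kernel of the first map of \eqref{pom sur}, namely the functionals on $M_k$ vanishing on $\sp_kE^R_1+\sp_kE^R_2$. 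This is nonzero for every $R$ with $\sp_kE^R_1+\sp_kE^R_2\ne M_k$ (for instance whenever $\lan a_1,R\ran\le 0$ and $\lan a_2,R\ran\le 1$), and it is exactly this $H^0$ term --- not the cokernel --- that produces the $\max\{0,\cdot\}$ in the statement. Carried out as planned, your argument would collapse to $\dim_k\sp_kE^R_{12}=W_1(R)+W_2(R)-2-\dim_kT^{1,-R}_{(1)}(Y)$ with no truncation, which is negative (hence absurd) e.g.\ whenever both $\lan a_j,R\ran\le 0$ with $R\ne 0$. The repair is routine: keep the $\dim_kT^{0,-R}_{(1)}(Y)$ term in the Euler characteristic and observe that whenever it is nonzero, some $\lan a_j,R\ran\le 0$, so $K^R_{a_j}=\emptyset$, hence $E^R_{12}=\emptyset$ and the deficit is exactly accounted for; when it is zero the formula holds on the nose and is automatically nonnegative.
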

\begin{proof}
It follows immediately by Proposition \ref{prop 2}.
\end{proof}

Let $d_{jk}:=\overline{\bar{a}_j\bar{a}_k}$ denote the compact edges of $Q(R)$ (for $\lan a_j,a_k\ran\leq \sigma$, $\lan a_j,R\ran\geq 1$, $\lan a_k,R\ran\geq 1$). 
We denote the lattice $N\cap\sp_k\lan a_j,a_k\ran$ by $\bar{N}_{jk}$ and its dual with $\bar{M}_{jk}$. Let  $\bar{R}_{jk}$ denote the projection of $R$ to $\bar{M}_{jk}$.

\begin{proposition}\label{t1i}
If the compact part of $Q(R)$ lies in a two-dimensional affine space we have
$$\dim_kT_{(i)}^{1}(-R)=\max\big\{0,\sum_{j=1}^NV^i_j(R)-\sum_{d_{jk}\in Q(R)}Q^i_{jk}(R)-{n\choose i}+s^i_{Q(R)}\big\},$$
where

$$V^i_j(R):=\left \{
\begin{array}{ll}
{n\choose i}& \text{ if }\lan a_j,R \ran>1\\ 
{n-1\choose i}& \text{ if }\lan a_j,R \ran=1\\
0& \text{ if }\lan a_j,R \ran\leq 0,
\end{array}
\right.
$$
$$Q^i_{jk}(R):=\left \{
\begin{array}{ll}
{W_j(R)+W_k(R)+n-4-\dim_kT^{1}_{\lan a_j,a_{k}\ran}(-\bar{R}_{jk})\choose i} & \text{ if }\lan a_j,R \ran,\lan a_k,R \ran\ne 0 \\
0&\text{ otherwise}
\end{array}
\right.
$$
$$s^i_{Q(R)}:=
\left \{
\begin{array}{ll}
\dim_k\wedge^i\big(\bigcap_{d_{jk}\in Q(R)}\sp_kE^R_{jk}\big)& \text{ if }Q(R) \text{ is compact}\\
0&\text{ otherwise}
\end{array}
\right.
$$
\end{proposition}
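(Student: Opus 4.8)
The strategy is to compute $T^1_{(i)}(-R)$ as the cohomology at the middle spot of the three-term complex \eqref{pom sur}, exactly as in the surface case (Proposition \ref{prop 2}), but now tracking dimensions face-by-face. First I would fix the hypothesis that the compact part of $Q(R)$ lies in a two-dimensional affine space; this forces every compact edge $d_{jk}$ and every vertex $\bar a_j$ with $\langle a_j,R\rangle\geq 1$ to sit inside a common $2$-plane, so that intersecting spans behaves just like in the toric surface situation studied in Lemma \ref{cyc sin}. The plan is: (1) identify the terms of \eqref{pom sur} dimension-wise; (2) compute the rank of the first map; (3) compute the rank of the second map using the planarity hypothesis; (4) assemble these into the Euler-characteristic-type formula in the statement, and finally take $\max\{0,\cdot\}$ because the cohomology is the cokernel of the second map restricted to the kernel-complement of the first, an honest nonnegative quantity.

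For step (1): the degree-$0$ term is $\bar C^i_{(i)}(M_k;k)$, of dimension $\binom{n}{i}$ by Remark \ref{poisson remark}. For each $j$, Proposition \ref{prop span} (together with $E^R_j$ spanning $\sp_kK^R_{a_j}$, which has dimension $0$, $n-1$, or $n$ according to whether $\langle a_j,R\rangle\leq 0$, $=1$, or $\geq 2$) gives $\dim \bar C^i_{(i)}(\sp_kE^R_j;k)=\binom{\dim\sp_kK^R_{a_j}}{i}=V^i_j(R)$. For each $2$-face $\langle a_j,a_k\rangle<\sigma$, the term is $\binom{\dim\sp_kE^R_{jk}}{i}$; here one invokes Lemma \ref{cyc sin} applied to the toric surface $\langle a_j,a_k\rangle$ to evaluate $\dim\sp_kE^R_{jk}=\dim\sp_kK^R_{a_j}\cap\sp_kK^R_{a_k}=W_j(R)+W_k(R)-2-\dim_kT^{1,-R}_{\langle a_j,a_k\rangle}$ when both pairings are positive, which after adding the $n-2$ dimensions coming from $\tau^\perp$ for the ambient $n$-dimensional cone yields the binomial argument appearing in $Q^i_{jk}(R)$; when $\langle a_j,a_k\rangle$ is not a face, or one pairing is $\leq 0$, that term is $0$, matching $Q^i_{jk}(R)$.

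For steps (2) and (3): the first map $\bar C^i_{(i)}(M_k;k)\to\bigoplus_j\bar C^i_{(i)}(\sp_kE^R_j;k)$ is injective provided some $\sp_kK^R_{a_j}=M_k$, i.e.\ provided there is a vertex $\bar a_j$ with $\langle a_j,R\rangle\geq 2$; when $Q(R)$ is compact and all its vertices are lattice points (the Fano-cone / reflexive-type situation) the intersection $\bigcap_{d_{jk}\in Q(R)}\sp_kE^R_{jk}$ is exactly the common "affine hull" of the compact region, and the correction $s^i_{Q(R)}=\dim_k\wedge^i(\bigcap_{d_{jk}\in Q(R)}\sp_kE^R_{jk})$ measures the failure of exactness at the first spot, i.e.\ the kernel of the first map — this is the subtle bookkeeping term. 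For the second map $\bigoplus_j\to\bigoplus_{\langle a_j,a_k\rangle<\sigma}$, planarity of the compact part means the nerve of the cover of the planar polygon $Q(R)$ by the half-spaces $K^R_{a_j}$ is (up to the non-compact unbounded directions, which contribute contractibly) the boundary of a polygon or an interval, so the map is surjective onto the $2$-face terms — this is the analogue of the surjectivity statement in Proposition \ref{prop 2} and its proof, and it is where the two-dimensionality of the compact part is essential. Granting surjectivity of the second map, the dimension of the cohomology in the middle equals $\sum_j V^i_j(R)-\binom{n}{i}+s^i_{Q(R)}-\sum_{d_{jk}\in Q(R)}Q^i_{jk}(R)$, truncated below at $0$.

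\textbf{Main obstacle.} The delicate point is pinning down surjectivity of the second differential in \eqref{pom sur} and the precise value of the defect term $s^i_{Q(R)}$, i.e.\ understanding exactly which $2$-faces $\langle a_j,a_k\rangle<\sigma$ occur and how the corresponding half-space intersections $\sp_kE^R_{jk}$ patch together along the boundary of the planar polyhedron $Q(R)$. Concretely one must check that when $Q(R)$ is unbounded (equivalently $\sigma$ is not $\QQ$-Gorenstein-positive at $R$), the noncompact faces contribute nothing new — so $s^i_{Q(R)}=0$ — whereas when $Q(R)$ is compact the cycle of compact edges closes up and produces the extra $\wedge^i$ of the common span; translating the combinatorics of this polygon into the exactness claims, rather than the routine dimension counts of steps (1)-(2), is the real content. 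The rest is an Euler-characteristic computation along the lines of Proposition \ref{prop 2}, using Corollary to Theorem \ref{th import}, Proposition \ref{prop span}, and Lemma \ref{cyc sin}.
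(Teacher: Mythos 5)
Your overall route coincides with the paper's: realize $T^1_{(i)}(-R)$ as the middle cohomology of the three-term complex \eqref{pom sur}, identify $\dim_k\bar{C}^i_{(i)}(\sp_kE^R_j;k)=V^i_j(R)$ via Proposition \ref{prop span} and $\dim_k\bar{C}^i_{(i)}(\sp_kE^R_{jk};k)=Q^i_{jk}(R)$ via Lemma \ref{cyc sin} (adding the $n-2$ dimensions from $\lan a_j,a_k\ran^{\perp}$), and then perform a dimension count. Your step (1) is exactly what the paper does and is fine.

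The gap is in your resolution of what you yourself identify as the main obstacle: the term $s^i_{Q(R)}$. You claim that the second differential is surjective onto the $2$-face terms and that $s^i_{Q(R)}$ equals the kernel of the first map $\bar{C}^i_{(i)}(M_k;k)\to\oplus_j\bar{C}^i_{(i)}(\sp_kE^R_j;k)$. Both claims fail precisely when $s^i_{Q(R)}\neq 0$. When $Q(R)$ is compact its edges form a cycle, and the difference map $(f_j)_j\mapsto (f_j-f_k)_{jk}$ cannot be surjective: the signed sum of the components of any image element around the cycle telescopes to zero, so after restriction to $W:=\bigcap_{d_{jk}\in Q(R)}\sp_kE^R_{jk}$ the image lands in a subspace of codimension $\dim_k\wedge^iW=s^i_{Q(R)}$. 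Equivalently, the conditions $f_j=f_k$ on $\sp_kE^R_{jk}$ cutting out $\ker$ of the second map are not independent — one is implied by the others on $W$ — and this redundancy is exactly what the paper's $s^i$ counts (``the dimension of the domain of restrictions that repeats''; compare also Remark \ref{rem res}, where cycles are blamed for these repetitions). It is not the kernel of the first map: if $\lan a_j,R\ran\geq 2$ for all $j$, the first map is injective while every $\sp_kE^R_{jk}=M_k$, so $s^i_{Q(R)}=\binom{n}{i}\neq 0$. Thus $+s^i_{Q(R)}$ enters as a correction to the rank of the second differential, $\mathrm{rank}=\sum_{d_{jk}}Q^i_{jk}(R)-s^i_{Q(R)}$, not as a contribution from the zeroth spot, and the planarity hypothesis is used precisely to ensure the compact edges form a single cycle so that this is the only redundancy. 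Your two misstatements do not cancel against each other, so steps (2)--(3) need to be replaced by this rank computation.
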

\begin{proof}

From Theorem \ref{th import} we know that $T^{1}_{(i)}(-R)$ is the cohomology group of the complex 
$$
\bar{C}^i_{(i)}(M_k;k)\to \oplus_{j} \bar{C}^i_{(i)}(\text{Span}_kE_j^R;k) \to \oplus_{\lan a_j,a_k\ran\leq \sigma} \bar{C}^i_{(i)}(\text{Span}_k(E_{jk}^R);k).
$$
Let $f:=(f_1,...,f_N)\in \oplus_j\bar{C}^i_{(i)}(\text{Span}_kE^R_j)$. 
We see that $V^i_j(R)=\dim_k(\wedge^i\text{Span}_kE_j^R)$. 
Assume now that $\text{Span}_kE^R_j$, $\text{Span}_kE^R_{k}\ne \emptyset$, otherwise we have  $\text{Span}_kE^R_{jk}=\emptyset$. 
We can easily verify that $Q^i_{jk}(R)=\dim_k(\wedge^i\text{Span}_kE_{jk}^R)$: 

we have $\dim_k(\text{Span}_kE^R_{jk})=n-2+\dim_k(\text{Span}_k\bar{E}_{jk}^{\bar{R}_{jk}})$, 
where $\bar{E}_{jk}$ is the generating set of $\lan a_j,a_k\ran^{\vee}\cap \bar{M}_{jk}$.
From Lemma \ref{cyc sin} we know that $\dim_k(\text{Span}_k\bar{E}^{\bar{R}}_{jk})=\max\{0,W_j(R)+W_{k}(R)-2-
\dim_kT^{1}_{\lan a_j,a_{k}\ran}(-\bar{R}_{jk})\}.$
Thus we have $$\dim_kT_{(i)}^{1}(-R)=\max\big\{0,\sum_{j=1}^NV^i_j(R)-\sum_{d_{jk}}Q^i_{jk}(R)-{n\choose i}+s^i\},$$
where $s^i$ equals the dimension of the domain of restrictions (that we get with restricting $f_j=f_{k}$ on $\text{Span}_kE_{jk}$) that repeats. We can easily verify that $s^i=s^i_{Q(R)}$.
\end{proof}

Using Proposition \ref{t1i} we can easily compute $T^{1}_{(i)}(-R)$ for three-dimensional affine toric varieties.  From straightforward computation of the formula in Proposition \ref{t1i} we obtain the following corollary.

\begin{corollary}
Let $X$ be an isolated $3$-dimensional toric singularity. Without loss of generality we can assume that generators $a_1,...,a_N$ are arranged in a cycle.
We have the following formulas:
$$
\begin{array}{l}
\dim_kT_{(1)}^1(-R)=\\
=\left \{
\begin{array}{ll}
\max\big\{0,\#\{\bar{a}_j~|~\bar{a}_j\in N,\text{ i.e.\ }\lan a_j,R \ran=1\}-3\big\} & \text{if } R>0\\
\max\big\{0,\#\{\bar{a}_j~|~\bar{a}_j\in N,\text{ not contained in a non-compact edge}\big\} & \text{if } R\not>0,
\end{array}
\right.
\\
\dim_kT^1_{(2)}(-R)=\left \{
\begin{array}{ll}
\max\big\{0,\#\{\bar{a}_j~|~\bar{a}^j\in N\}+C(R)-3\big\} &   \text{if } R>0\\
\max\big\{0,\#\{\bar{a}_j~|~\bar{a}^j\in N\}+C(R)-2\big\} & \text{if  } R\not>0,
\end{array}
\right.
\\
\dim_kT^1_{(3)}(-R)=\max\{0,C(R)-1\} 
\\
\dim_kT^1_{(i)}(-R)=0 \text{ for }i\geq 4,
\end{array}
$$
where $C(R):=\#\{\text{chambers with }\lan a_j,R\ran>1\}$ and a chamber with $\lan a_j,R\ran>1$ means $\lan a_j,R\ran>1$ for $j=j_0,j_0+1,...,j_0+k$  for some $j_0,k\in \NN$ and $\lan a_j,R\ran\not>1$ for $j=j_0-1$ and $j=j_0+k+1$. 
\end{corollary}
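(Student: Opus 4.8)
The plan is to specialize Proposition~\ref{t1i} to the three-dimensional case. Since $X$ is an isolated $3$-dimensional toric singularity, its cone $\sigma=\lan a_1,\dots,a_N\ran$ has all rays arranged in a cycle, and every compact part of $Q(R)$ automatically lies in a two-dimensional affine space (in fact in $\AA(R)$ itself), so the hypothesis of Proposition~\ref{t1i} is satisfied for all $R$. Thus $\dim_k T^1_{(i)}(-R)$ is given by the formula $\max\{0,\sum_j V^i_j(R)-\sum_{d_{jk}\in Q(R)}Q^i_{jk}(R)-\binom{3}{i}+s^i_{Q(R)}\}$, and the task is to evaluate each of the four pieces for $n=3$ and $i=1,2,3,\ge 4$.

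First I would note that for $i\ge 4$ every term $\binom{\cdot}{i}$ with an entry $\le 3$ vanishes, and the wedge power $\wedge^i$ of a subspace of $M_k\cong k^3$ is zero; hence $\dim_k T^1_{(i)}(-R)=0$, settling the last line. For the remaining cases I would tabulate $V^i_j(R)$: when $\lan a_j,R\ran>1$ it is $\binom{3}{i}$, when $\lan a_j,R\ran=1$ it is $\binom{2}{i}$, and otherwise $0$. So for $i=1$ only the vertices $\bar a_j\in N$ (those with $\lan a_j,R\ran=1$) and the "thick" rays (those with $\lan a_j,R\ran>1$) contribute, with weights $2$ and $3$; for $i=2$ the weights become $1$ and $3$; for $i=3$ only thick rays contribute, with weight $1$. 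Next I would compute $Q^i_{jk}(R)$ using Lemma~\ref{cyc sin}: the edge term is $\binom{W_j+W_k-1-\dim_k T^1_{\lan a_j,a_k\ran}(-\bar R_{jk})}{i}$ (here $n=3$ so the $n-4=-1$), which for a compact edge $d_{jk}$ simplifies according to whether the two endpoints are lattice points or not and whether the two-dimensional slice has a deformation. A careful bookkeeping, organizing the rays into maximal "chambers" where $\lan a_j,R\ran>1$, should collapse the alternating sum $\sum_j V^i_j-\sum_{d_{jk}}Q^i_{jk}$ into a count of lattice vertices plus $C(R)$-type corrections coming from the chambers; the $-\binom{3}{i}$ accounts for the global $M_k$ term, and $s^i_{Q(R)}$ is nonzero only when $R>0$ (so $Q(R)$ is compact), contributing the $\wedge^i$ of the common span of the edge-slices, which in the cyclic $3$-dimensional picture is governed by the number of chambers. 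The split between the cases $R>0$ and $R\not>0$ in the statement is precisely the presence or absence of this $s^i_{Q(R)}$ term together with the behavior of rays lying on non-compact edges.

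The main obstacle will be the combinatorial bookkeeping in the edge sum, i.e.\ correctly identifying, for each compact edge $d_{jk}$, the value of $\dim_k T^1_{\lan a_j,a_k\ran}(-\bar R_{jk})$ and hence $\dim_k\sp_k E^R_{jk}$, and then verifying that the telescoping of $\sum_j V^i_j-\sum_{d_{jk}}Q^i_{jk}+s^i_{Q(R)}-\binom{3}{i}$ produces exactly the stated counts $\#\{\bar a_j\in N\}$ (refined by whether they lie on a non-compact edge, in the $R\not>0$ case) plus $C(R)$ (for $i=2$) or $C(R)-1$ (for $i=3$). I expect this to be a finite case analysis: one must check that along a chamber of consecutive thick rays the intermediate edges contribute in a way that cancels all but a single surviving $C(R)$-term, and that the boundary rays of a chamber (where $\lan a_j,R\ran$ drops to $1$ or below) are handled consistently with $V^i_j$ and $W_j$. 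Because Proposition~\ref{t1i} already does the conceptual work, the corollary is "straightforward computation" as the text says; I would present it by fixing $i\in\{1,2,3\}$, writing out the three weight tables, and then checking the telescoping on a single chamber and on the junctions between chambers, concluding by assembling the global formula and separating the two cases according to positivity of $R$.
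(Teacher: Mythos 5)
Your plan is the paper's approach: specialize Proposition \ref{t1i} to $n=3$ (where the hypothesis on $Q(R)$ is automatic, since $Q(R)\subset \AA(R)$) and evaluate $\sum_j V^i_j-\sum_{jk}Q^i_{jk}-\binom{3}{i}+s^i_{Q(R)}$ for each $i$. Two points need correcting before the bookkeeping you defer can actually close. First, you treat $\dim_k T^1_{\lan a_j,a_k\ran}(-\bar R_{jk})$ as a quantity to be determined edge by edge (``whether the two-dimensional slice has a deformation''); but since $X$ is an \emph{isolated} singularity it is smooth in codimension $2$, so every $2$-face is smooth and all these terms vanish identically --- this is exactly what the paper notes at the outset, and it is what makes $Q^i_{j,j+1}(R)$ depend only on $W_j(R)$ and $W_{j+1}(R)$ and renders the telescoping a finite case check. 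Second, your description of $s^i_{Q(R)}$ as ``governed by the number of chambers'' is not right: the paper computes $s^i_{Q(R)}=\binom{\max\{0,3-m_1\}}{i}$, where $m_1$ is the number of lattice vertices $\bar a_j\in N$, i.e.\ it is controlled by how many of the spans $\sp_kE^R_{j,j+1}$ are proper subspaces, not by $C(R)$; the chamber count $C(R)$ enters only through the $i=2,3$ edge terms $Q^i_{j,j+1}$. With those corrections the remaining work is exactly the weight tables and the per-edge evaluation of $V^i_j-Q^i_{j,j+1}$ that the paper writes out (e.g.\ for $i=1$ one gets $\sum_jV^1_j=3m_2+2m_1$ and $\sum_jQ^1_{j,j+1}=3m_2+m_1$, hence $\max\{0,m_1-3\}$, and for $i=2,3$ a short case table over the four sign patterns of consecutive rays); since you have not carried out any of that case analysis, nor the separate (non-compact) analysis for $R\not>0$, the proposal as written establishes the correct strategy but does not yet yield the stated formulas.
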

\begin{proof}
We use Theorem \ref{t1i} with $n=3$. We also have $T^1_{\lan a_j,a_{j+1}\ran}(-\bar{R}_{j,j+1})=0$ for all $j$ since $X$ is smooth in codimension 2. Let $m_1$ be a number of $a_j$ with $\lan a_j,R \ran=1$ (i.e.\ $m_1$ is the number of lattice vertices of the polytope $Q(R)$) and $m_2$ be a number of vertices $a_j$ with $\lan a_j,R \ran>1$. %Thus we have $N=m_1+m_2$. 

If $R>0$ we have $N=m_1+m_2$ and thus we can easily compute that
$$
s^i_{Q(R)}=\dim_k\wedge^i\bigcap_{j}\text{Span}_kE^R_{j,j+1}={\max\{0,3-m_1\}\choose i}.
$$

For $i=1$ we have $\sum_{j=1}^NV^1_j(R)=3m_2+2m_1$, $\sum_{j=1}^NW_j(R)=2m_1+m_2$ and thus $\sum_{d_{j}}Q^1_{j,j+1}(R)=2\sum_{j=1}^N(W_j(R))-N=4m_2+2m_1-m_1-m_2=3m_2+m_1$. 
Thus we see that $T^1_{(1)}(-R)=\max\{0,m_1-3\}$.

For $i=2$ we have 
$$Q^2_{j,j+1}(R)=\left \{
\begin{array}{ll}
3 & \text{ if }V^2_j(R)=V^2_{j+1}(R)=3 \\
1 & \text{ if }V^2_j(R)=2,V^2_{j+1}(R)=3\text{ or }V^2_{j}(R)=3,V^2_{j+1}(R)=2\\
0&\text{ otherwise}
\end{array}
\right.
$$
and thus
$$V^2_j(R)-Q^2_{j,j+1}(R)=\left \{
\begin{array}{ll}
1 & \text{ if }\lan a_j,R \ran=1 \text{ and }\lan a_{j+1},R \ran=1\\
0 & \text{ if }\lan a_j,R \ran=1 \text{ and }\lan a_{j+1},R \ran=2\\
2 & \text{ if }\lan a_j,R \ran=2 \text{ and }\lan a_{j+1},R \ran=1\\
0 & \text{ if }\lan a_j,R \ran=2 \text{ and }\lan a_{j+1},R \ran=2\\
0&\text{ otherwise}
\end{array}
\right.
$$
from which we easily obtain the formula that we want. 
For $i=3$ we have $\sum_{j=1}^NV^3_j(R)=m_2$, 
$$Q^3_{j,j+1}(R)=\left \{
\begin{array}{ll}
1 & \text{ if }V^3_j(R)=V^3_{j+1}(R)=3 \\
0&\text{ otherwise}
\end{array}
\right.
$$
and the formula follows.

If $R\not>0$ we do not have any compact $2$-faces in $Q(R)$. The only nontrivial case is when we have two vertices that lie on the unbounded edges. We skip this computations since they are similar as in the case $R>0$. 
\end{proof}

\begin{remark}\label{rem res}
When $Q(R)$ is not contained in a two-dimensional affine space, we can still follow 
 the proof of Proposition \ref{t1i} and we obtain that
\begin{equation}\label{eq res}
\dim_kT_{(i)}^{1}(-R)\geq \sum_{j=1}^NV^i_j(R)-\sum_{d_{jk}\in Q(R)}Q^i_{jk}(R)-{n\choose i}.
\end{equation}
The cycles in $Q(R)$ give us some repetitions on the restrictions ($f_j=f_{k}$ on $\text{Span}_kE^R_{jk}$) and thus it is hard to obtain a formula for $\dim_kT_{(i)}^{1}(-R)$ in higher dimensions.  
For every tree $T$ in $Q(R)$ we obtain also upper bounds:
\begin{equation}\label{eq res 2}
\dim_kT_{(i)}^{1}(-R)\leq \sum_{j=1}^NV^i_j(R)-\sum_{d_{jk}\in T}Q^i_{jk}(R)-{n\choose i},
\end{equation}
since no cycles appear in $T$.
\end{remark}

We focus now on higher dimensional toric varieties. We will analyse the case of
 $\QQ$-Gorenstein toric varieties that are smooth in codimension two.

\begin{lemma}\label{lem qgor0}
Let $Y$ be a $\QQ$-Gorenstein variety which is smooth in codimension two. If $R\in M$ is a degree such that $\lan a_j,R\ran\geq 2$ for some $j\in \{1,...,N\}$, then $T^1_{(i)}(-R)=0$ for all $i\geq 1$.
\end{lemma}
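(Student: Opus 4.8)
The strategy is to use the characterization of $T^1_{(i)}(-R)$ from Theorem~\ref{th import} (or its Corollary), namely that it is the cohomology at the middle term of the complex
$$\bar{C}^i_{(i)}(M_k;k)\to \oplus_{j}\bar{C}^i_{(i)}(\sp_kE^R_j;k)\to \oplus_{\lan a_j,a_k\ran<\sigma}\bar{C}^i_{(i)}(\sp_kE^R_{jk};k),$$
and to exploit the $\QQ$-Gorenstein hypothesis to control the spaces $\sp_kE^R_j$. Recall that $Y$ being $\QQ$-Gorenstein gives a primitive $R^*\in M$ and $g\in\NN$ with $\lan a_j,R^*\ran=g$ for every $j$; moreover $Y$ being smooth in codimension two means the statement of Theorem~\ref{th import} applies with $d=1$, so the complex above indeed computes $T^1_{(i)}(-R)$.

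\textbf{Key steps.} First I would reduce to understanding when the relevant $\sp_kK^R_{a_j}$ (equivalently $\sp_kE^R_j$) equal all of $M_k$. By Proposition~\ref{prop span} we have $\sp_kK^R_{a_j}=M_k$ precisely when $\lan a_j,R\ran\geq 2$; it is $(a_j)^\perp$ when $\lan a_j,R\ran=1$, and $0$ when $\lan a_j,R\ran\leq 0$. The hypothesis gives us one index, say $j_0$, with $\lan a_{j_0},R\ran\geq 2$, hence $\sp_kE^R_{j_0}=M_k$. Second, I would argue that the degree $R$ is, up to the $\QQ$-Gorenstein structure, "positive" enough that in fact \emph{every} generator satisfies $\lan a_j,R\ran\geq 1$ or can be made so — this is where the primitive element $R^*$ with $\lan a_j,R^*\ran=g>0$ for all $j$ enters: if some $\lan a_j,R\ran\leq 0$ one replaces $R$ by $R+m R^*$ for large $m$, which does not change $T^1_{(i)}$ in a way that affects the vanishing (one has to check the degree-shift argument as in the proof of Proposition~\ref{smooth}, using that localization at $x^{R^*}$ kills the relevant cohomology since $\QQ$-Gorenstein cones localize to smaller-dimensional ones away from the apex). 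Third, with $\sp_kE^R_{j_0}=M_k$ in hand, the map $\bar{C}^i_{(i)}(M_k;k)\to\bar{C}^i_{(i)}(\sp_kE^R_{j_0};k)$ is the identity, so the first differential of the complex is injective; combined with a diagram chase showing that any cocycle $(f_j)$ in the middle term is forced to agree on overlaps with the component coming from $j_0$, one concludes the cocycle is a coboundary. Concretely: given a cocycle $(f_j)_j$, set $f:=f_{j_0}\in\bar{C}^i_{(i)}(M_k;k)$ (using $\sp_kE^R_{j_0}=M_k$); for any other $j$ with $\lan a_j,a_{j_0}\ran\leq\sigma$ the cocycle condition forces $f_j=f_{j_0}=f$ on $\sp_kE^R_{jj_0}$, and then multi-additivity plus connectedness of the "cycle" of generators (smoothness in codimension two means the two-faces are spanned by consecutive pairs, so the generators form a connected graph via their two-faces) propagates $f_j=f|_{\sp_kE^R_j}$ to all $j$. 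Hence $(f_j)=d f$ and the cohomology vanishes.

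\textbf{Main obstacle.} The delicate point is the second step: handling degrees $R$ which are \emph{not} positive, i.e.\ $\lan a_j,R\ran\leq 0$ for some $j$. In that case $\sp_kE^R_j=0$ and the naive connectedness argument via two-faces can break if the "positive part" of the generator cycle is disconnected. The $\QQ$-Gorenstein hypothesis is exactly what rescues this: one wants to show that the subset $\{j:\lan a_j,R\ran\geq 1\}$ together with the index $j_0$ still spans enough, or alternatively to run the localization/degree-shift trick of Proposition~\ref{smooth} with $r(\tau)$ replaced by (a multiple of) the canonical degree $R^*$. Making this precise — checking that replacing $R$ by $R+mR^*$ for $m\gg 0$ leaves $T^1_{(i)}(-R)$ unchanged or at least does not destroy the vanishing, using that $\lan a_j,R^*\ran=g$ is constant so the combinatorial type of which generators are "active" stabilizes — is the part requiring care. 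A clean alternative, which I would try first, is to observe that once a single $\lan a_{j_0},R\ran\geq 2$ occurs, one can test against the idempotent structure: the image of $d$ from $\bar C^i_{(i)}(M_k;k)$ already surjects onto enough of the middle term because $\bar C^i_{(i)}$ is functorial in the span and the span at $j_0$ is everything, so the obstruction to a cocycle being a coboundary lives in $\oplus_{j}\bar C^i_{(i)}(\sp_kE^R_j;k)/\bar C^i_{(i)}(M_k;k)$ modulo the coboundaries, and one shows this group receives no nonzero cocycles by a direct check using Remark~\ref{poisson remark} (that elements of $\bar C^i_{(i)}(\Lambda;k)$ are determined by values on a lattice basis). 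I expect the bookkeeping for non-positive $R$ to be the only genuinely nontrivial ingredient; everything else is formal manipulation of the complex from Theorem~\ref{th import}.
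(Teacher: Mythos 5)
Your starting point coincides with the paper's: both use the complex of Theorem \ref{th import} and the observation (via Proposition \ref{prop span}) that $\sp_kE^R_{j_0}=M_k$ once some $\lan a_{j_0},R\ran\geq 2$. But the paper then runs a pure dimension count rather than a diagram chase: it chooses a spanning tree $T$ of the vertex set of $Q(R)$ rooted at $\bar a_{j_0}$, with leaves the vertices in $H^R_1=\{a_j:\lan a_j,R\ran=1\}$ and inner vertices in $H^R_{\geq 2}$, and applies the upper bound \eqref{eq res 2} from Remark \ref{rem res}; each non-root tree vertex contributes $V^i_j(R)-Q^i_{j,\mathrm{parent}(j)}(R)=0$ and the root cancels against $\binom{n}{i}$, so the bound is $\leq 0$.

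Your proposal has two genuine gaps, both located exactly where this count does the work. First, the propagation step: from $f_j=f_{j_0}$ on $\sp_kE^R_{jj_0}$ you conclude $f_j=f|_{\sp_kE^R_j}$, but the restriction $\bar C^i_{(i)}(V;k)\to\bar C^i_{(i)}(W;k)$ for a proper subspace $W\subsetneq V$ has nontrivial kernel, so this inference requires $\sp_kE^R_{jk}=\sp_kE^R_j$ for the edge used. That equality is precisely the statement $Q^i_{jk}(R)=V^i_j(R)$; it holds only for carefully chosen edges (e.g.\ a vertex with $\lan a_j,R\ran=1$ must be attached to a neighbour with $\lan a_k,R\ran\geq 2$, where Lemma \ref{cyc sin} together with smoothness in codimension two gives $\dim_k\sp_kE^R_{jk}=n-1=\dim_k\sp_kE^R_j$), and fails for arbitrary pairs; you neither verify it nor specify the edges, and "consecutive pairs in a cycle" is a three-dimensional picture that does not describe the $2$-faces of $\sigma$ in general. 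Second, your proposed reduction of the case $\lan a_j,R\ran\leq 0$ by replacing $R$ with $R+mR^*$ does not work: this computes the different graded piece $T^{1,-R-mR^*}_{(i)}$, and unlike the localization argument of Proposition \ref{smooth} (which shifts by an element of $\sigma^\vee\cap\tau^\perp$ and identifies the limit with a $T^1$ of a localized algebra) there is no mechanism transferring its vanishing back to degree $-R$. The paper needs no such reduction: generators with $\lan a_j,R\ran\leq 0$ satisfy $V^i_j(R)=0$ and are not vertices of $Q(R)$, so they simply never enter the tree; the only convex-geometric input is that every vertex of $H^R_1$ is joined by an edge of $Q(R)$ to a vertex of $H^R_{\geq 2}$, which is where the $\QQ$-Gorenstein hyperplane $\lan\cdot\,,gR-R^*\ran=0$ is used.
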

\begin{proof}
The hyperplane $H:=\{a\in N_{\RR}~|~\lan a,gR-R^*\ran=0\}$ subdivides the set of generators of $\sigma$: $H^R_{\leq 0}:=\{a_j~|~\lan a_j,R \ran\leq 0\}$, $H^R_1=\{a_j~|~\lan a_j,R \ran= 1\}$ and $H^R_{\geq 2}=\{a_j~|~\lan a_j,R \ran\geq 2\}$.
We fix a vertex $\bar{a}_{j_0}$ of $Q(R)$ with $\lan a_{j_0},R\ran\geq 2$. 
Skipping some of the edges, we can arrange $Q(R)$ into a tree $T$ with the main vertex $\bar{a}_{j_0}$, the set of leaves equal to $H_1^R$ and the set of inner vertices equal to $H^R_{\geq 2}\setminus \bar{a}_{j_0}$. 
From the equation \eqref{eq res 2} we see that $\dim_kT_{(i)}^{1}(-R)\leq \sum_{j=1}^NV^i_j(R)-\sum_{d_{jk}\in T}Q^i_{jk}(R)-{n\choose i}$ and we can easily verify that this is $\leq 0$.
\end{proof}

Deformation theory of affine varieties is closely related to the Hodge theory of smooth projective varieties. We will use the following recent result.
\begin{theorem}\label{tri g}
Let $X=\spec(A)$ be an affine cone over a projective variety $Y$. On $T^q_{(i)}(A)$ we have a natural $\ZZ$ grading and
 if $Y$ is arithmetically Cohen-Macaulay and $\omega_Y\cong \cO_Y(m)$, then 
$$T^q_{(i)}(A)_m=
\left \{
\begin{array}{ll}
H^{n-i,q}_{\text{prim}}(Y) & \text{ if }i>q \\
H^{n-q-1,i}_{\text{prim}}(Y) &\text{ if }i\leq q,
\end{array}
\right.
$$
where $T^q_{(i)}(A)_m$ denotes the degree $m\in \ZZ$ elements of $T^q_{(i)}(A)$ and $H^{p,q}_{\text{prim}}(Y)$ is the primitive cohomology, namely the kernel of the Lefschetz maps $$H^{p,q}(Y)\to H^{p+1,q+1}(Y).$$
\end{theorem}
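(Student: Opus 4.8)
The plan is to descend in two steps --- first from the cone $X=\spec(A)$ to the punctured cone $U:=X\setminus\{0\}$, then from $U$ to the base $Y$ via the Euler sequence --- and then to read off the degree-$m$ graded piece using hard Lefschetz. The $\ZZ$-grading on $A$ (equivalently, the $\mathbb{G}_m$-action making $X$ a cone) induces one on $\LL_{A|k}$, on the derived exterior powers $\wedge^i\LL_{A|k}$, and hence on all the groups $T^q_{(i)}(A)$, which is the grading in the statement. Since $Y$ is smooth, $U$ is smooth, so the restriction to $U$ of $\wedge^i\LL_X$ is $\Omega^i_U$, the sheaves $\mathcal{T}^q_{(i)}$ are supported at the vertex for $q\geq 1$, and $\mathbb{R}\mathcal{H}om(\wedge^i\LL_X,\cO_X)|_U\simeq\wedge^iT_U$ concentrated in degree $0$. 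Applying the local cohomology triangle $\mathbb{R}\Gamma_{\{0\}}(X,-)\to\mathbb{R}\Gamma(X,-)\to\mathbb{R}\Gamma(U,-)$ to $\mathbb{R}\mathcal{H}om(\wedge^i\LL_X,\cO_X)$ gives a long exact sequence linking $T^q_{(i)}(A)$, $H^q(U,\wedge^iT_U)$, and the cohomology of $\mathbb{R}\Gamma_{\{0\}}$. Arithmetic Cohen--Macaulayness together with $\omega_Y\cong\cO_Y(m)$ forces $\omega_A\cong A(m)$, so $A$ is Gorenstein; graded local duality then identifies the cohomology of $\mathbb{R}\Gamma_{\{0\}}$ with graded $k$-duals of the André--Quillen \emph{homology} modules $T^{(i)}_p(A)$, re-graded by the shift $m$. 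Since $A_0=k$ and $i\geq 1$, the complex $\wedge^i\LL_{A|k}$ --- hence every $T^{(i)}_p(A)$ --- has no component in internal degree $0$, so all these dual contributions vanish in internal degree $m$; this yields $T^q_{(i)}(A)_m\cong H^q(U,\wedge^iT_U)_m$ for all $q$. (This is the only place the special value of $m$ enters this step, through the cancellation of the twist $(m)$ in local duality against $(-m)$.)

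For the descent to $Y$, let $\pi\colon U\to Y$ be the projection; it is affine, with $\pi_*\cO_U=\bigoplus_{e\in\ZZ}\cO_Y(e)$ (the summand of weight $e$ being the one of internal degree $e$) and $R^j\pi_*\cO_U=0$ for $j\geq 1$. The relative tangent bundle of the $\mathbb{G}_m$-torsor $\pi$ (associated to $\cO_Y(1)$) is trivialised by the Euler field, giving the Euler sequence $0\to\cO_U\to T_U\to\pi^*T_Y\to0$ and therefore $0\to\pi^*(\wedge^{i-1}T_Y)\to\wedge^iT_U\to\pi^*(\wedge^iT_Y)\to0$. Passing to cohomology, extracting the part of internal degree $m$, and using $\cO_Y(m)\cong\omega_Y$ to rewrite $\wedge^{i-1}T_Y\otimes\cO_Y(m)\cong\Omega^{n-i}_Y$ and $\wedge^iT_Y\otimes\cO_Y(m)\cong\Omega^{n-1-i}_Y$ (recall $\dim Y=n-1$), one obtains
$$\cdots\to H^q(Y,\Omega^{n-i}_Y)\to T^q_{(i)}(A)_m\to H^q(Y,\Omega^{n-1-i}_Y)\xrightarrow{\ \partial\ }H^{q+1}(Y,\Omega^{n-i}_Y)\to\cdots.$$
The key geometric input is that $\partial$ is the Lefschetz operator: the extension class of the Euler sequence, read in the weight-$0$ part of $H^1(U,\pi^*\Omega^1_Y)=\bigoplus_eH^1(Y,\Omega^1_Y\otimes\cO_Y(e))$, is $c_1(\cO_Y(1))\in H^1(Y,\Omega^1_Y)$, and after the identifications above $\partial$ becomes cup product with this class.

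Now hard Lefschetz finishes the argument. The long exact sequence presents $T^q_{(i)}(A)_m$ as an extension of $\ker\!\big(L\colon H^{q}(Y,\Omega^{n-1-i}_Y)\to H^{q+1}(Y,\Omega^{n-i}_Y)\big)$ by $\coker\!\big(L\colon H^{q-1}(Y,\Omega^{n-1-i}_Y)\to H^{q}(Y,\Omega^{n-i}_Y)\big)$, and by the Lefschetz decomposition the cokernel is always the primitive subspace $H^{n-i,q}_{\mathrm{prim}}(Y)$. If $i>q$, the source of the kernel term lies in total degree $(n-1-i)+q<\dim Y$, where $L$ is injective, so the kernel vanishes and $T^q_{(i)}(A)_m\cong H^{n-i,q}_{\mathrm{prim}}(Y)$. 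If $i\leq q$, the target of the cokernel term lies in total degree $(n-i)+q>\dim Y$, where $L$ is surjective, so the cokernel vanishes; Serre duality on $Y$ (which intertwines $L$ with $L$) together with the symmetry of primitive cohomology then identify the surviving kernel with $H^{n-q-1,i}_{\mathrm{prim}}(Y)$. This is exactly the asserted formula.

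The step I expect to be the main obstacle is the comparison $T^q_{(i)}(A)_m\cong H^q(U,\wedge^iT_U)_m$: one must control the André--Quillen homology of the cone in the internal grading, bearing in mind that $\LL_{A|k}$ need not be bounded (so the local-duality computation of $\mathbb{R}\Gamma_{\{0\}}$ must be carried out carefully in the unbounded derived category), and one must make the vanishing in degree $m$ precise. The honest identification of $\partial$ with the Lefschetz operator $L$ is a secondary technical point; everything after it is a formal consequence of hard Lefschetz and Serre duality on $Y$.
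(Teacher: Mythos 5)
The paper offers no internal proof of this theorem --- it is quoted verbatim from \cite{car-fat} (Corollary 3.14) --- so the only meaningful comparison is with that reference, and your argument is essentially a reconstruction of its proof: pass to the punctured cone $U$, descend along the $\mathbb{G}_m$-torsor $\pi\colon U\to Y$ via the Euler sequence, and identify the connecting homomorphism with the Lefschetz operator. Your Hodge-theoretic endgame is correct, and it has the useful side effect of showing that ``primitive cohomology'' in the statement must be read in the classical sense, i.e.\ $H^{p,q}_{\text{prim}}=\operatorname{coker}\big(L\colon H^{p-1,q-1}\to H^{p,q}\big)$ for $p+q\le\dim Y$ (equivalently the kernel of the appropriate \emph{power} of $L$), and not literally as $\ker\big(L\colon H^{p,q}\to H^{p+1,q+1}\big)$: with the literal reading every case $i>q+1$ would give $0$, which already fails for the cone over a smooth plane curve of degree $d$, where $T^0_{(2)}(A)_{d-3}=(\omega_A^{-1})_{d-3}=k=H^{0,0}(Y)$, exactly as your cokernel term predicts. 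The paper's own application (computing $\dim H^{1,1}_{\text{prim}}(Y)=N-n=h^{1,1}(Y)-1$ for Fano $Y$ of dimension $\ge 3$) confirms the classical reading.

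The one step that is genuinely incomplete --- and which you correctly single out --- is the comparison $T^q_{(i)}(A)_m\cong H^q(U,\wedge^iT_U)_m$. Your computation of $\mathbb{R}\Gamma_{\{0\}}$ via graded local duality silently uses derived biduality of $\wedge^i\LL_{A|k}$, namely $\mathbb{R}\h\big(\mathbb{R}\h(\wedge^i\LL_{A|k},A),\omega_A[n]\big)\simeq\wedge^i\LL_{A|k}(m)[n]$; this holds when $\wedge^i\LL_{A|k}$ is perfect, but not for a general non-lci cone singularity, so the vanishing of the local-cohomology correction in internal degree $m$ does not yet follow from the (correct) observation that $\wedge^i\LL_{A|k}$ lives in positive internal degrees. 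One must instead control $H^{\kbb}_{\{0\}}$ of $\mathbb{R}\ch(\wedge^i\LL_X,\cO_X)$ directly, e.g.\ via the hypercohomology spectral sequence together with the arithmetic Cohen--Macaulay hypothesis (which is where that hypothesis actually earns its keep); this is the technical core of \cite{car-fat}. Everything downstream of that isomorphism --- the Euler sequences, the identification of the extension class with $c_1(\cO_Y(1))$, hard Lefschetz and Serre duality --- is sound.
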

\begin{proof}
See \cite[Corollary 3.14]{car-fat}.
\end{proof}

We will apply Theorem \ref{tri g} to the case of Fano toric varieties, where reflexive polytopes come into the play. 

\begin{definition}
A full dimensional lattice polytope $P\subset M_{\RR}$ is called \emph{reflexive} if $0\in \text{int}(P)$ and, moreover, its dual 
$$P^{\vee}:=\{a\in N_{\RR}~|~\lan a,P\ran\geq -1\}$$
is also a lattice polytope. Here the expression $\lan a,P\ran$ means the minimum over the set $\{\lan a,r\ran~|~r\in P\}$.
\end{definition}

Reflexive polytopes lead to interesting toric varieties that are important for mirror symmetry.
There is a one-to-one correspondence between Gorenstein toric Fano varieties and reflexive polytopes (see \cite[Theorem 8.3.4]{cox}).

If $X$ is a Gorenstein affine toric variety given by $\sigma=\cone(P)$, where $P$ is a reflexive polytope, then $X$ is an affine cone over a smooth Fano toric variety $Y$, embedded in some $\PP^n$ by the anticanonical line bundle. 

\begin{theorem}\label{th fano}
Let $X=\spec(A)$ be an $n$-dimensional affine cone over a smooth toric Fano variety $Y$ ($n\geq 3$). Then $T^1_{(i)}(A)=0$ for $n\geq 4$ and  $i=2,...,n-2$. Moreover, $\dim_kT^1_{(n-1)}(A)=N-n$ and $T^1_{(k)}(A)=0$ for $k\geq n\geq 3$. Furthermore, $\dim_kT^1_{(1)}(A)=N-3$ for $n=3$ and $T^1_{(1)}(A)=0$ for $n>3$.
\end{theorem}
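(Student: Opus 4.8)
The plan is to combine Theorem \ref{tri g} (for the summands with $i\geq 2$) with the known description of $T^1_{(1)}$ from Altmann and the complex \eqref{pom sur} (for $i=1$), after identifying the relevant primitive Hodge numbers of a smooth toric Fano variety $Y$. Since $X$ is the affine cone over the smooth Fano $Y\subset\PP^{N-1}$ embedded by the anticanonical bundle, $Y$ is arithmetically Cohen--Macaulay and $\omega_Y\cong\cO_Y(-1)$, so $m=-1$ in Theorem \ref{tri g}. First I would note that, by Theorem \ref{tri g}, for $q=1$ and $i\geq 2$ we have $T^1_{(i)}(A)_m = H^{n-i,1}_{\mathrm{prim}}(Y)$ if $i>1$ (so for all $i\geq 2$, the case $i\leq q$ being vacuous), and the grading forces this to be concentrated in degree $m=-1$. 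Thus the whole problem reduces to computing the primitive Hodge groups $H^{n-i,1}_{\mathrm{prim}}(Y)$ for the smooth projective toric variety $Y$ of dimension $n-1$.

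Second, I would invoke the fact that a smooth complete toric variety has vanishing odd cohomology and $H^{p,q}(Y)=0$ for $p\neq q$; hence $H^{n-i,1}(Y)=0$ unless $n-i=1$, i.e.\ $i=n-1$. This immediately gives $T^1_{(i)}(A)=0$ for $2\leq i\leq n-2$ (and there is nothing in this range when $n=3$), and also $T^1_{(k)}(A)=0$ for $k\geq n$ because then $n-k\leq 0$ so $H^{n-k,1}(Y)=0$ as well (one must also check $T^1_{(k)}(A)_m$ vanishes for $m\neq -1$, which follows from the same degree argument, or alternatively from Theorem \ref{th import} since the complex in that theorem has length $n$). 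For $i=n-1$ we get $T^1_{(n-1)}(A)\cong H^{1,1}_{\mathrm{prim}}(Y)$, the kernel of the Lefschetz map $H^{1,1}(Y)\to H^{2,2}(Y)$. For a smooth projective toric variety $Y$ of Picard rank $\rho$, $\dim H^{1,1}(Y)=\rho = N - (n-1)$ (the number of rays minus the dimension), since $\mathrm{Pic}(Y)\otimes\QQ\cong A^1(Y)$ has rank equal to the number of rays minus $\dim Y$. I would argue that for a Fano $Y$ the hard Lefschetz operator is injective on $H^{1,1}$ only up to the one-dimensional span of the Kähler class direction that dies; more carefully, $H^{1,1}_{\mathrm{prim}}(Y) = \ker(L\colon H^{1,1}\to H^{2,2})$ has dimension $h^{1,1}(Y)-h^{0,0}(Y) = (N-n+1) - 1 = N-n$, using that $L$ is the cup product with an ample class and, by hard Lefschetz, $L\colon H^{0,0}\xrightarrow{\sim} H^{\dim Y,\dim Y}$ forces $L\colon H^{1,1}\to H^{2,2}$ to be surjective with kernel of the stated codimension (equivalently the primitive decomposition $H^{2}=H^2_{\mathrm{prim}}\oplus L H^0$). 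This yields $\dim_k T^1_{(n-1)}(A)=N-n$.

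Third, for the remaining $i=1$ cases I would treat $n=3$ and $n>3$ separately. For $n\geq 3$ arbitrary, $T^1_{(1)}(A)$ is the first cohomology of \eqref{pom sur}; alternatively, when $n>3$ I can again use Theorem \ref{tri g}: $T^1_{(1)}(A)_m = H^{n-1,1}_{\mathrm{prim}}(Y)$, and since $\dim Y = n-1 \geq 3$ and $H^{p,q}(Y)=0$ for $p\neq q$ we get $n-1\neq 1$, so this vanishes; a separate check that the only possibly nonzero degree is $m=-1$ (degree reasons, or Altmann's homogeneity results) completes $T^1_{(1)}(A)=0$ for $n>3$. For $n=3$, $Y$ is a smooth toric del Pezzo surface with $N$ rays, so $\dim H^{1,1}(Y)= N-2$ and $H^{n-1,1}_{\mathrm{prim}}(Y)=H^{2,1}_{\mathrm{prim}}(Y)$ is not what appears; rather for $n=3$ the group $T^1_{(1)}(A)$ corresponds (via Theorem \ref{tri g} with $i=q=1$, the case $i\leq q$) to $H^{n-q-1,i}_{\mathrm{prim}}(Y)=H^{1,1}_{\mathrm{prim}}(Y)$, which as computed above has dimension $(N-2)-1 = N-3$. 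Hence $\dim_k T^1_{(1)}(A)=N-3$ for $n=3$, matching the surface-cone computations and Altmann's formula.

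\textbf{Main obstacle.} The routine part is the bookkeeping with the two cases $i>q$ versus $i\leq q$ in Theorem \ref{tri g} and keeping the grading degree $m$ straight; the genuinely substantive point is the computation of the primitive Hodge number $h^{1,1}_{\mathrm{prim}}(Y)=N-n$ for a smooth toric Fano $Y$, i.e.\ showing the Lefschetz map $H^{1,1}(Y)\to H^{2,2}(Y)$ has one-dimensional kernel. This is where I would need to be careful: it follows from hard Lefschetz for the smooth projective $Y$ together with the toric identity $h^{1,1}(Y)=N-\dim Y$, but one should make sure this uses only that $Y$ is smooth complete (not Fano) for the Hodge structure, with the Fano/reflexive hypothesis entering only to guarantee $X$ is a cone over such a $Y$ with $\omega_Y=\cO_Y(-1)$. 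I would also double-check that $Y$ being arithmetically Cohen--Macaulay (needed to apply Theorem \ref{tri g}) holds for the anticanonical embedding of a smooth toric Fano, which is standard since reflexive implies the section ring is Cohen--Macaulay.
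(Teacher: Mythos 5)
There is a genuine gap: your argument only computes the single graded piece $T^1_{(i)}(A)_{-1}$, and the assertion that ``the grading forces this to be concentrated in degree $m=-1$'' is exactly the point that needs proof and is not a formality. Theorem \ref{tri g} says nothing about $T^1_{(i)}(A)_{m'}$ for $m'\neq -1$, and for cones $T^1$ is in general \emph{not} concentrated in the anticanonical degree (already for $T^1_{(1)}$ of toric cones Altmann's work exhibits contributions in many multidegrees $R$). The paper spends most of its proof on precisely this: it runs over all $R\neq R^*$ and kills $T^{1,-R}_{(i)}(A)$ by a three-way case analysis --- if some $\lan a_j,R\ran\geq 2$ it invokes Lemma \ref{lem qgor0} (a tree/upper-bound argument via \eqref{eq res 2}); if $H^R_{\geq 2}=\emptyset$ and $H^R_1$ is contained in a facet $F$, it picks $s\in M$ vanishing on $F$ and uses that nonvanishing of $T^{1,-R}_{(i)}$ would propagate to $T^{1,-R+\alpha s}_{(i)}$ for infinitely many $\alpha$, contradicting finite-dimensionality of $T^1_{(i)}$ for an isolated singularity; the remaining case is trivial. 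Your proposal, as written, supplies none of this, so it does not establish any of the claimed global vanishing statements, only their degree-$(-1)$ parts. (Your parenthetical fallback ``from Theorem \ref{th import} since the complex has length $n$'' also does not work as stated: the length of that complex bounds the cohomological index $k$, not the Hodge index $i$.)

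Where your argument does apply (degree $-1$), it is correct and in part takes a different route from the paper: the paper only uses Theorem \ref{tri g} for $i=2,\dots,n-2$ and for $i=1$, $n=3$, and computes $\dim_kT^{1,-R^*}_{(n-1)}(A)=N-n$ and the vanishing for $i\geq n$ and for $i=1$, $n>3$ combinatorially via Remark \ref{rem res} (the inequality \eqref{eq res} being an equality when no restrictions repeat, with $V^{n-1}_j(R^*)=1$ and $Q^{n-1}_{jk}(R^*)=0$), whereas you derive the same numbers from $H^{1,1}_{\mathrm{prim}}(Y)$ and hard Lefschetz. Your computation $h^{1,1}_{\mathrm{prim}}(Y)=(N-n+1)-1=N-n$ is consistent with the remark following the theorem in the paper. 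There is also a harmless index slip in your $i=1$, $n>3$ step: with $i=q=1$ Theorem \ref{tri g} gives $H^{n-2,1}_{\mathrm{prim}}(Y)$, not $H^{n-1,1}_{\mathrm{prim}}(Y)$; both vanish for $n>3$, so nothing is affected. To repair the proposal you must add the full analysis of the degrees $R\neq R^*$.
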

\begin{proof}
It holds that $H^{p,q}(Y)=0$ for $p\ne q$ (see e.g.\ \cite{bri}) and thus also $H_{\text{prim}}^{p,q}(Y)=0$. 
By Theorem \ref{tri g} we have $T^1_{(i)}(A)_{-1}=0$ for $n\geq 4$ and $i=2,...,n-2$. 
Following the proof of Lemma \ref{lem qgor0}, we see that if $R\ne R^*=(\underline{0},1)$ we have the following options:
\begin{enumerate}
\item there exists $a_j$, such that $\lan a_j,R\ran\geq 2$, which implies that $T^{1,-R}_{(i)}(A)=0$ for all $i\geq 1$ by Lemma \ref{lem qgor0}. 
\item  $H^R_{\geq 2}=0$ and $H_1^R=\{a_j\in F\}$ for a facet $F$. There exists $s\in M$ such that $\lan s,a_j\ran=0$ for all $a_j\in F$. 
If $T^{1,-R}_{(i)}(A)\ne 0$ for some $i$, then $\dim_kT^{1,-R+\alpha s}_{(i)}(A)\ne 0$ for infinitely many $\alpha\in \ZZ$. Thus $\dim_kT^{1}_{(i)}(A)=\infty$, which is a contradiction since $T^{1}_{(i)}(A)$ is supported on the singular locus and $A$ is an isolated singularity. Thus $T^{1,-R}_{(i)}(A)=0$ for all $i\geq 1$.
%Since every facet is a triangle, then $T^{1,R}_{(i)}(A)=0$ for $i=1,...,n-2$ by Proposition \ref{t1i}: we have three vertices with $V^i_j={n-1\choose i}$, three edges with $Q^i_{jk}={n-2\choose i}$ and $s^i_{Q(R)}={n-3\choose i}$. A simple computation shows
%$$\dim_kT^{1,R}_{(i)}(A)=\max\{0,\frac{(n-3)!}{i!(n-i-3)!}\big( -\frac{(i-2)(i-1)(i)}{(n-i-1)(n-i)(n-i+1)} \big)\}=0.$$
\item $H^R_{\geq 2}=H^R_1=0$, which trivially implies that $T^{1,-R}_{(i)}(A)=0$.
\end{enumerate} 
Now we focus in the case $i=n-1$. 
Above we saw that $T^{1,-R}_{(n-1)}(A)=0$ if $R\ne R^*$. 
The inequality \eqref{eq res} is in the case $R=R^*$, $i=n-1$ an equality since no restrictions repeat and thus we obtain
$$\dim_kT^{1,-R^*}_{(n-1)}(A)=\max\big\{0,\sum_{j=1}^NV^{n-1}_j(R^*)-\sum_{d_{jk}\in Q(R^*)}Q^{n-1}_{jk}(R^*)-{n\choose n-1}\big\}.$$
Since $V_j^{n-1}(R^*)={n-1\choose n-1}=1$ and $Q_{jk}^{n-1}(R^*)={n-2\choose n-1}=0$ we obtain $T^{1,-R^*}_{(n-1)}(A)=N-n$.
With the same procedure we immediately see that 
 $T^1_{(k)}(A)=0$ for $k\geq n$.
 Finally we focus on the case $i=1$. With the same computations as above we see that
 $\dim_kT^1_{(1)}(A)=0$ if $n>3$.
 If $n=3$, then $\dim_kT^1_{(1)}(A)_{-1}=\dim_kT^1_{(1)}(A)$ as above and $T^1_{(1)}(A)=H_{\text{prim}}^{1,1}(Y)$ by Theorem \ref{tri g}. We have $\dim_kH_{\text{prim}}^{1,1}(Y)=N-3$ by \cite[Theorem 9.4.11]{cox} and thus we conclude the proof.
\end{proof}
\begin{remark}
From Theorem \ref{th fano} and Theorem \ref{tri g} it follows that $$\dim_kH^{1,1}_{\text{prim}}(Y)=N-n=\rk(\pic(Y))-1.$$
\end{remark}

For $i=n-2$ we can generalize Theorem \ref{th fano} to the following:

\begin{proposition}
Let $X=\spec(A)$ be $n$-dimensional $\QQ$-Gorenstein variety given by $\sigma=\cone(P)$, where $P$ is a simplicial polytope. Then 
$T^1_{(n-2)}(A)=0$.
\end{proposition}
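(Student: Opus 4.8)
The plan is to reduce to the setting of Proposition~\ref{t1i} (and Remark~\ref{rem res}) and exploit the extra rigidity coming from the simplicial and $\QQ$-Gorenstein hypotheses. First I would note that, exactly as in the proof of Lemma~\ref{lem qgor0}, the $\QQ$-Gorenstein hypothesis forces every relevant degree $R\in M$ into one of the three cases listed in the proof of Theorem~\ref{th fano}: either some $\lan a_j,R\ran\geq 2$ (handled verbatim by Lemma~\ref{lem qgor0}, giving $T^1_{(i)}(-R)=0$ for all $i$), or $H^R_{\geq 2}=\emptyset$ with $H^R_1$ the vertex set of a facet $F$, or $H^R_{\geq 2}=H^R_1=\emptyset$ (trivially zero). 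So the only surviving degree is $R=R^*$ with $\lan a_j,R^*\ran=g$ for all $j$; after rescaling we may as well treat it like the case $R=R^*$ analyzed in Theorem~\ref{th fano}, where $Q(R)=P$ itself and every $V^i_j(R^*)$ and $Q^i_{jk}(R^*)$ is computed from the binomial coefficients $\binom{n-1}{i}$, $\binom{n-2}{i}$.

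Next I would specialize to $i=n-2$. Here $V^{n-2}_j(R^*)=\binom{n-1}{n-2}=n-1$ on each vertex, and for a compact edge $d_{jk}$, since the surface $\lan a_j,a_k\ran$ is smooth (normality in codimension two, or rather we only need that $d_{jk}$ is an edge so $\dim_k\sp_kE^{R^*}_{jk}=n-2$), we get $Q^{n-2}_{jk}(R^*)=\binom{n-2}{n-2}=1$. The point where the simplicial hypothesis enters is the combinatorics of $Q(R^*)=P$: because $P$ is a simplex-faced polytope, the edge graph restricted to any facet, and indeed the whole $1$-skeleton, is controlled well enough to compute the alternating sum $\sum_j V^{n-2}_j - \sum_{d_{jk}} Q^{n-2}_{jk} - \binom{n}{n-2}$ together with the correction term $s^{n-2}_{Q(R^*)}$ coming from the repeated restrictions. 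Concretely, I expect to use that the $1$-skeleton of a polytope is connected and, more precisely, to run the complex of Theorem~\ref{th import} directly: the middle-to-right map $\oplus_j\bar C^{n-2}_{(n-2)}(\sp_kE^{R^*}_j;k)\to\oplus_{jk}\bar C^{n-2}_{(n-2)}(\sp_kE^{R^*}_{jk};k)$ and compute its kernel modulo the image of $\bar C^{n-2}_{(n-2)}(M_k;k)$, showing the cohomology vanishes.

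The main obstacle, and where I would spend the most care, is the bookkeeping of $s^{n-2}_{Q(R^*)}=\dim_k\wedge^{n-2}\big(\bigcap_{d_{jk}}\sp_kE^{R^*}_{jk}\big)$ versus the number of independent edge-relations on a simplicial polytope: one must show the Euler-characteristic-type count collapses to $\leq 0$. For a simplicial polytope the relevant identity should follow from the fact that each facet is an $(n-1)$-simplex, so the number of edges and vertices on each facet are in the rigid ratio $\binom{n-1}{2}$ to $n-1$, and a global double-counting over facets (each edge lying in exactly those facets containing it) pins down $\sum_{d_{jk}}1$ in terms of $N$ and the face numbers; plugging $\binom{n-1}{n-2}=n-1$ and $\binom{n-2}{n-2}=1$ then yields cancellation. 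Alternatively, and perhaps more cleanly, one argues that for a simplicial $P$ every compact edge relation is ``forced'' so that the map in \eqref{pom sur} (with $d+1$ replaced by the appropriate skeletal dimension) is injective after quotienting by $\bar C^{n-2}_{(n-2)}(M_k;k)$, using that $\sp_kE^{R^*}_j$ is a hyperplane $(a_j)^\perp$ and distinct such hyperplanes intersect in the expected codimension precisely because the $a_j$ spanning a face of a simplex are linearly independent. I would present this second route as the body of the proof, falling back on the explicit inequality \eqref{eq res 2} for a tree $T$ in $Q(R^*)$ to get the upper bound $\dim_kT^1_{(n-2)}(-R^*)\leq\sum_j(n-1)-\sum_{d_{jk}\in T}1-\binom{n}{n-2}$ and checking, using $\#T=N-1$ for a spanning tree, that this equals $(n-1)N-(N-1)-\binom{n}{2}=(n-2)N+1-\binom{n}{2}$, which need not be $\leq 0$ in general — so the genuinely essential input must be that for a simplicial polytope the cycles in $Q(R^*)$ produce enough additional repeated restrictions ($s^{n-2}$) to force the equality in \eqref{eq res} down to something nonpositive. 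Nailing that last inequality, using the simplicial structure to count independent cycles, is the crux.
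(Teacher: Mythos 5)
Your reduction to the single degree $R=R^*$ and your computation of the local contributions $V^{n-2}_j(R^*)={n-1\choose n-2}=n-1$ and $Q^{n-2}_{jk}(R^*)={n-2\choose n-2}=1$ match the paper exactly, and your observation that a spanning-tree bound only gives $(n-2)N+1-{n\choose 2}$, which is positive for large $N$, correctly identifies where the difficulty sits. But the proof stops precisely at the crux you name and never closes it, and the two routes you sketch for closing it do not work. The needed inequality is
\begin{equation*}
e \;\geq\; (n-1)N-\tfrac{n(n-1)}{2},
\end{equation*}
where $e$ is the number of edges of the $(n-1)$-dimensional simplicial polytope $Q(R^*)=P$ with $N$ vertices; combined with the equality $\dim_kT^{1,-R^*}_{(n-2)}(A)=\max\{0,N(n-1)-e-{n\choose 2}\}$ (the paper asserts that for $i=n-2$ and $R=R^*$ no restrictions repeat, so \eqref{eq res} is an equality with $s^{n-2}=0$), this gives the vanishing. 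That inequality is exactly the Lower Bound Theorem for simplicial polytopes, proved by Barnette \cite{bar}. It is a genuine theorem, not a consequence of the ``rigid ratio'' of edges to vertices on each simplex facet: naive double-counting over facets only bounds $e$ in terms of the number of facets, and the resulting estimate is not equivalent to the LBT (which was an open conjecture for decades precisely because no such elementary count suffices). Your alternative suggestion --- that the hyperplanes $(a_j)^\perp$ for $a_j$ in a face intersect in expected codimension and hence force injectivity --- also cannot substitute for it, since the obstruction is global (how many independent cycles the edge graph of $P$ has), not local to a single face.

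So the missing idea is a single citation, but it is the load-bearing one: without invoking (or reproving) Barnette's Lower Bound Theorem, the inequality $N(n-1)-e-{n\choose 2}\leq 0$ is unestablished and the proof is incomplete. Everything before that point is sound and coincides with the paper's argument.
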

\begin{proof}
The only non-clear part is when $X$ is Gorenstein and we consider the degree $R=R^*$. 
Again following the proof of Proposition \ref{t1i} we see that
$$\dim_kT^{1,-R^*}_{(n-2)}(A)=\max\big\{0,\sum_{j=1}^NV^{n-2}_j(R^*)-\sum_{d_{jk}\in Q(R^*)}Q^{n-2}_{jk}(R^*)-{n\choose n-2}\big\},$$
since no restrictions repeat. Let $e$ denote the number of edges in $Q(R^*)$. Since 
$V_j^{n-2}(R^*)={n-1\choose n-2}=n-1$ and $Q_{jk}^{n-2}(R^*)={n-2\choose n-2}=1$, we obtain
$\dim_kT^1_{(i)}(-R^*)=\max\{0,N(n-1)-e-n(n-1)/2\}$. For simplicial polytopes it holds that $e\geq N(n-1)-n(n-1)/2$ by the lower bound conjecture proved in \cite{bar} and thus $\dim_kT^1_{(i)}(-R^*)=0$.
\end{proof}

\begin{remark}
For $i=1$ we can generalize Theorem \ref{th fano} to the following: $\QQ$-Gorenstein toric varieties that are smooth in codimension 2 and $\QQ$-factorial (or equivalently simplicial) in codimension 3  are globally rigid (see  \cite{tot} or \cite{alt4} for the affine case).
\end{remark}

\section{Deformation quantization of affine toric varieties}

In this section we prove that every Poisson structure on an affine toric variety can be quantized. We will use the Maurer-Cartan formalism, Kontsevich's formality theorem (or more precisely its corollary \ref{smooth def quan}) and the GIT quotient construction for an affine toric variety $\spec(A)$ without torus factors: we can write  $\spec(A)=\AA^N/\!\!/G$ for some group $G$. This construction works over an algebraically closed field $k$ of characteristic $0$.
The proof of deformation quantization works also in the case of affine toric varieties with torus factors.

\begin{definition}
Let $\mathfrak{g}$ be a differential graded Lie algebra. The \emph{Maurer-Cartan equation} is 
$$d\xi+\frac{1}{2}[\xi,\xi]=0,~~\xi\in \mathfrak{g}^1,$$
where $\mathfrak{g}^1$ denotes the set of degree $1$ elements in $\mathfrak{g}$.
A solution of this equation is called a \emph{Maurer-Cartan (an MC) element}.
\end{definition}

\begin{lemma}\label{1par-maurer}
One parameter formal deformations $(A[[\hslash]],*)$ of $A$ are in bijection with MC elements of a dgla $\mathfrak{g}:=\big(\hslash C^{\kbb}(A)[1]\big)[[\hslash]]$.
\end{lemma}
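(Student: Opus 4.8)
The plan is to unwind both sides of the claimed bijection and match them term by term in powers of $\hslash$. On one side, a one-parameter formal deformation is an associative $k[[\hslash]]$-bilinear continuous product $*$ on $A[[\hslash]]$ reducing to the original multiplication modulo $\hslash$; writing $a*b = ab + \sum_{m\geq 1}\gamma_m(a\otimes b)\hslash^m$ with $\gamma_m\in C^2(A)$, this is the same datum as a single element $\xi := \sum_{m\geq 1}\gamma_m\hslash^m \in \hslash C^2(A)[[\hslash]]$, which is exactly a degree $1$ element of $\mathfrak{g} = \big(\hslash C^{\kbb}(A)[1]\big)[[\hslash]]$ (recall that the shift $[1]$ puts $C^2(A)$ in degree $1$). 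So as \emph{sets} the degree $1$ elements of $\mathfrak{g}$ and the collections $(\gamma_m)_{m\geq 1}$ are in obvious bijection; the content of the lemma is that associativity of $*$ corresponds precisely to the Maurer--Cartan equation $d\xi + \tfrac12[\xi,\xi] = 0$.

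First I would recall that the Hochschild differential $d$ and the Gerstenhaber bracket $[\,\cdot\,,\cdot\,]$ on $C^{\kbb}(A)[1]$ are, up to the standard sign conventions, exactly the first- and second-order parts of the obstruction to associativity: for $\gamma, \gamma' \in C^2(A)$ one has $d\gamma = \mu\circ\gamma \pm \gamma\circ\mu$ (the bar-differential measuring the failure of $\gamma$ to be a Hochschild cocycle relative to the multiplication $\mu$) and $[\gamma,\gamma'] = \gamma\circ\gamma' \pm \gamma'\circ\gamma$ measures the cross term. Concretely, associativity of $a*b*c$ expanded in $\hslash$ gives, in degree $N\geq 2$, the equation
$$
(d\gamma_{N-1})(a\otimes b\otimes c) = \sum_{\substack{p+q=N\\ p,q\geq 1}} \big(\gamma_p(a\otimes \gamma_q(b\otimes c)) - \gamma_p(\gamma_q(a\otimes b)\otimes c)\big),
$$
and the right-hand side is, by definition of the Gerstenhaber composition product, $-\tfrac12\sum_{p+q=N}[\gamma_p,\gamma_q](a\otimes b\otimes c)$ (the factor $\tfrac12$ coming from the symmetry $[\gamma_p,\gamma_q]=[\gamma_q,\gamma_p]$ for these degrees). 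Collecting all $N\geq 2$ and using that $\xi$ starts in $\hslash$-degree $1$, the full set of associativity constraints is equivalent to $d\xi + \tfrac12[\xi,\xi]=0$ in $\mathfrak{g}$. I would also note that the degree-$1$ (i.e.\ $N=1$) constraint is vacuous, which is why no correction to the leading term $ab$ is needed, and that $k[[\hslash]]$-bilinearity and continuity are automatic from the Ansatz, while compatibility of the isomorphism $A'\otimes_{k[[\hslash]]}k\to A$ is built into the condition $a*b\equiv ab \pmod{\hslash}$.

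The main obstacle — really the only place where care is needed — is bookkeeping of signs: the shift $[1]$, the sign in the Gerstenhaber bracket $[f,g]=f\circ g-(-1)^{(m+1)(n+1)}g\circ f$, and the signs in the Hochschild differential must be reconciled with the naive expansion of associativity so that the cross terms assemble into exactly $\tfrac12[\xi,\xi]$ rather than, say, $[\xi,\xi]$ or a sign-twisted variant. Since all the relevant cochains here live in $C^2(A)$, which sits in degree $1$ after the shift, all the parity signs $(-1)^{(m+1)(n+1)}$ evaluate to $+1$, so the bracket is genuinely symmetric on these elements and the factor $\tfrac12$ is correct; this is the standard fact that MC elements of the Hochschild dgla are associative deformations, and I would simply cite that this is the classical Gerstenhaber picture (e.g.\ the references already in the paper) after spelling out the $\hslash$-filtered version above.
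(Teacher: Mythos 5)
Your argument is correct and is exactly the classical Gerstenhaber computation that the paper's proof simply delegates to a citation (Schedler, Proposition 4.3.1): unwinding $a*b=ab+\sum_{m\ge 1}\gamma_m(a\otimes b)\hslash^m$ and matching the order-$\hslash^N$ part of associativity with the linear ($d\gamma_N$) and quadratic ($\tfrac12\sum_{p+q=N}[\gamma_p,\gamma_q]$) parts of the Maurer--Cartan equation, using that the sign $(-1)^{(m+1)(n+1)}$ is $+1$ for $2$-cochains. Two cosmetic slips that do not affect the argument: the linear term at order $\hslash^N$ is $d\gamma_N$, not $d\gamma_{N-1}$, and the $N=1$ constraint is not vacuous --- it says $d\gamma_1=0$, which is precisely the $\hslash^1$-part of the MC equation, so the bijection still holds.
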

\begin{proof}
See \cite[Proposition 4.3.1]{sch}.
\end{proof}

\begin{theorem}[Formality theorem \cite{kon}, \cite{dol-tam-tsy}]\label{th defq}
Let $X=\spec(A)$ be a smooth affine variety. There exists an $L_{\infty}$-quasi-isomorphism between the Hochschild dgla $C^{\kbb}(A)[1]$ and the formal dgla $H^{\kbb}(A)[1]$ (i.e.\ the graded Lie algebra $H^{\kbb}(A)[1]$ with trivial differential).
\end{theorem}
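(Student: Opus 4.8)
The plan is to reduce the statement to Kontsevich's local formality theorem and then globalize. First I would invoke the Hochschild--Kostant--Rosenberg theorem: since $X=\spec(A)$ is smooth, the HKR map induces an isomorphism of graded algebras $H^{\kbb}(A)\cong \wedge^{\kbb}_A\der_k(A)=:T_{\mathrm{poly}}(A)$, the algebra of polyvector fields, under which the (trivial-differential) Gerstenhaber bracket on $H^{\kbb}(A)[1]$ becomes the Schouten--Nijenhuis bracket. Moreover, for smooth $A$ the subcomplex $D_{\mathrm{poly}}(A)$ of polydifferential operators inside the Hochschild cochains is a quasi-isomorphic sub-dgla of $C^{\kbb}(A)$. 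So it suffices to produce an $L_{\infty}$-quasi-isomorphism $\cU\colon T_{\mathrm{poly}}(A)[1]\to D_{\mathrm{poly}}(A)[1]$ whose linear term is HKR; composing with the inclusion $D_{\mathrm{poly}}(A)[1]\hookrightarrow C^{\kbb}(A)[1]$ and inverting the $L_{\infty}$-morphism then yields the asserted quasi-isomorphism between $C^{\kbb}(A)[1]$ and $H^{\kbb}(A)[1]$.

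In the local model $A=k[x_1,\dots,x_N]$ (or its completion) this $\cU$ is exactly Kontsevich's formality morphism: its Taylor coefficients $\cU_n$ are given by sums over admissible graphs $\Gamma$ of $\frac{1}{|\mathrm{Aut}\,\Gamma|}\,w_\Gamma\,B_\Gamma$, where $B_\Gamma$ is the polydifferential operator attached to $\Gamma$ and $w_\Gamma$ is the weight obtained by integrating a product of angle forms over the compactified configuration space $\overline{\mathrm{Conf}}_{n,m}(\mathbb{H})$ of points in the upper half-plane; the $L_{\infty}$-relations for $\cU$ follow from Stokes' theorem on these configuration spaces together with the vanishing of all but the expected boundary contributions. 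To pass from affine space to a general smooth affine $A$, I would follow Dolgushev's globalization via formal geometry: form the (pro-)smooth bundle of formal coordinate systems over $X$, apply the local formality morphism fiberwise to obtain a formality map of sheaves of dglas on that bundle, and descend to global sections over $X$ by means of a Fedosov-type flat connection. The obstructions to this descent lie in coherent cohomology groups that vanish because $X$ is affine, and the relevant jet torsors are controlled because $X$ is smooth; in practice one trivializes the $1$-jet torsor and propagates $\cU$ along the Fedosov differential. Alternatively, one invokes the Tamarkin--Dolgushev--Tamarkin--Tsygan route: using the formality of the chain operad of little disks (equivalently, a choice of Drinfeld associator / Etingof--Kazhdan quantization), the solution of the Deligne conjecture endows $C^{\kbb}(A)$ with a homotopy Gerstenhaber (indeed $e_2$-) algebra structure, and \cite{dol-tam-tsy} shows this structure is formal; restricting to the underlying $L_{\infty}$-algebras and again using HKR and affineness gives the statement.

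The main obstacle is the globalization step. Kontsevich's formulas are written in a fixed coordinate system and are not manifestly coordinate-independent, so one must either carefully track how the $\cU_n$ transform under changes of coordinates --- which is precisely what forces the Fedosov resolution and the accompanying cohomological vanishing arguments --- or replace the explicit construction by the operadic one, in which case the difficulty is instead repackaged into the proof of the Deligne conjecture and the formality of the little disks operad. Once the $L_{\infty}$-quasi-isomorphism is in hand, everything downstream is formal: in particular a Maurer--Cartan element of $H^{\kbb}(A)[1]$ (a Poisson structure, in view of the $e_3(3)$-obstruction discussed above) is transported to a Maurer--Cartan element of $C^{\kbb}(A)[1]$ (a star product) functorially, which is the use we will make of the theorem in Section 5.
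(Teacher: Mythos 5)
The paper does not prove this theorem: it is imported from the literature, with \cite{kon} and \cite{dol-tam-tsy} cited as the source, and the only thing the paper actually uses downstream is Corollary \ref{smooth def quan} (transport of Maurer--Cartan elements along the $L_\infty$-quasi-isomorphism). Your outline is therefore not competing with an argument in the paper but reconstructing the one in the references, and it does so accurately: the reduction via HKR and the polydifferential subcomplex $D_{\mathrm{poly}}(A)\subset C^{\kbb}(A)$, the graph expansion of $\cU_n$ with weights given by integrals over compactified configuration spaces and the $L_\infty$-relations extracted from Stokes' theorem, and the Fedosov-type globalization with obstructions vanishing by affineness and smoothness, is exactly the route of \cite{kon} as globalized by Dolgushev; the operadic alternative via the Deligne conjecture and formality of the little disks operad is the route of \cite{dol-tam-tsy}. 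The one point worth flagging is the ground field: Kontsevich's weights are real periods, so the explicit configuration-space morphism is a priori defined only over $\RR$, whereas this paper works over an arbitrary (in Section 5, algebraically closed) field $k$ of characteristic $0$; for such $k$ the associator/Tamarkin route you list as an ``alternative'' is really the one that delivers the statement as quoted, or else one must supplement the integral construction with a descent or Lefschetz-principle argument. This does not affect the application in Section 5, which only needs existence of the $L_\infty$-quasi-isomorphism and the induced correspondence of Maurer--Cartan elements.
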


\begin{corollary}\label{smooth def quan}
Every Poisson structure $\pi$ on a smooth affine variety $\spec(A)$ can be quantized. 
\end{corollary}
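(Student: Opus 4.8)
The plan is to obtain the statement as the standard consequence of the formality theorem \ref{th defq}, transporting the Poisson structure through the $L_\infty$-quasi-isomorphism as a Maurer--Cartan element and then reading off a star product via Lemma \ref{1par-maurer}. First I would note that, $\spec(A)$ being smooth, the Hodge decomposition gives $H^2(A)=H^2_{(2)}(A)$ and $H^3(A)=H^3_{(3)}(A)$, and the Gerstenhaber bracket satisfies $[H^2_{(2)}(A),H^2_{(2)}(A)]\subseteq H^3_{(3)}(A)$; hence the defining condition $e_3(3)[\pi,\pi]=0$ of a Poisson structure is simply $[\pi,\pi]=0$. Since the differential of the formal dgla $H^{\kbb}(A)[1]$ is trivial, this says exactly that $\hslash\pi$ is a Maurer--Cartan element of the dgla $\mathfrak{h}:=\big(\hslash H^{\kbb}(A)[1]\big)[[\hslash]]$, because $d(\hslash\pi)+\tfrac12[\hslash\pi,\hslash\pi]=\tfrac{\hslash^2}{2}[\pi,\pi]=0$.

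Next I would push $\hslash\pi$ forward along the $L_\infty$-quasi-isomorphism $\mathcal{U}\colon H^{\kbb}(A)[1]\to C^{\kbb}(A)[1]$ of Theorem \ref{th defq}, extended $\hslash$-linearly and $\hslash$-adically continuously to $\mathfrak{h}\to\mathfrak{g}$ with $\mathfrak{g}=\big(\hslash C^{\kbb}(A)[1]\big)[[\hslash]]$ as in Lemma \ref{1par-maurer}. An $L_\infty$-morphism sends Maurer--Cartan elements to Maurer--Cartan elements, so
$$\gamma:=\sum_{n\geq 1}\frac{1}{n!}\,\mathcal{U}_n\big((\hslash\pi)^{\otimes n}\big)=\sum_{n\geq 1}\frac{\hslash^n}{n!}\,\mathcal{U}_n(\pi^{\otimes n})\ \in\ \mathfrak{g}$$
is a Maurer--Cartan element of $\mathfrak{g}$; the series makes sense $\hslash$-adically since the coefficient of $\hslash^n$ is the single term $\tfrac{1}{n!}\mathcal{U}_n(\pi^{\otimes n})\in C^2(A)$. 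By Lemma \ref{1par-maurer}, $\gamma$ corresponds to a one-parameter formal deformation $(A[[\hslash]],*)$ with $a*b=ab+\gamma(a\otimes b)$.

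It remains to check the normalization in Definition \ref{def quant}. The first Taylor coefficient $\mathcal{U}_1$ is the Hochschild--Kostant--Rosenberg map, which sends the bivector $\pi$ to the $2$-cochain $\tfrac12\pi$, so
$$a*b=ab+\tfrac12\pi(a\otimes b)\,\hslash+\sum_{n\geq 2}\gamma_n(a\otimes b)\,\hslash^n,\qquad \gamma_n:=\tfrac{1}{n!}\mathcal{U}_n(\pi^{\otimes n}),$$
which is precisely a quantization of $\pi$. Since Theorem \ref{th defq} is used as a black box, there is no genuine obstacle in this argument; the only points needing attention are the $\hslash$-adic convergence that legitimizes transferring $\hslash\pi$ along $\mathcal{U}$, and the conventional factor $\tfrac12$ in $\mathcal{U}_1$ (any other normalization is absorbed by rescaling $\hslash$).
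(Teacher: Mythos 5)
Your argument is correct and is exactly the route the paper intends: the paper states this corollary without proof, treating it as the standard consequence of Theorem \ref{th defq} via transfer of Maurer--Cartan elements along the $L_\infty$-quasi-isomorphism and Lemma \ref{1par-maurer}, which is precisely what you spell out. The only care needed is the one you already flag, namely the normalization of $\mathcal{U}_1$ and the direction of the quasi-isomorphism (it must go from $H^{\kbb}(A)[1]$ to $C^{\kbb}(A)[1]$, as in Kontsevich's construction), neither of which is a genuine gap.
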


Now we focus to the case of (singular) toric varieties.
Using the lattice grading the Gerstenhaber bracket can be simplified as follows.

\begin{lemma}\label{lem ger pr}
Let $A=k[\Lambda]$, $f(x^{\lam_1},...,x^{\lam_m})=\sum_{i=0}^{p}f_{i}(\lam_1,...,\lam_m)x^{-R_i+\lam_1+\cdots +\lam_m}\in C^m(A)$ and $g(x^{\lam_1},...,x^{\lam_n})=\sum_{j=0}^{r}g_{j}(\lam_1,...,\lam_n)x^{-S_j+\lam_1+\cdots \lam_n}\in C^n(A)$, where $f_{i}\in C^m(\Lambda,\Lambda\setminus (R_i+\Lambda);k)$, for $i=0,..,p$ and $g_{j}\in C^n(\Lambda,\Lambda\setminus (S_j+\Lambda);k)$ for $j=0,...,r$. Then 
$$[f,g](x^{\lam_1},...,x^{\lam_{m+n-1}})=\sum_{i,j}[f_{i},g_{j}]x^{-R_i-S_j+\lam_1+\cdots \lam_{m+n-1}},$$
where
$$[f_{i},g_{j}]:=f_{i}\circ g_{j}-(-1)^{(m+1)(n+1)}g_{j}\circ f_{i}\in C^{m+n-1}(\Lambda,\Lambda\setminus (R_i+S_j+\Lambda);k),$$
where 
$f_{i}\circ g_{j}(\lam_1,...,\lam_{m+n-1}):=$
$$\sum_{u=1}^mk^n_u\cdot f_{i}(\lam_1,...,\lam_{u-1}, -S_j+\lam_u+\cdots +\lam_{u+n-1},\lam_{u+n},..., \lam_{m+n-1})g_{j}(\lam_u,...,\lam_{u+n-1}),$$
%$g_{j}\circ f_{i}(a_1,...,a_{m+n-1}):=$
%$$\sum_{u=1}^nk^m_u\cdot g_{j}(\lam_1,...,\lam_{u-1}, -R_i+\lam_u+\cdots +\lam_{u+m-1},\lam_{u+m},..., \lam_{m+n-1})f_{i}(\lam_u,...,\lam_{u+m-1}),$$
where $k^n_u=(-1)^{(u-1)(n+1)}$. %and $k^m_u:=(-1)^{(u-1)(m+1)}$.
\end{lemma}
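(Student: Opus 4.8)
The plan is to verify directly that the lattice-grading decomposition is compatible with the Gerstenhaber bracket, i.e.\ that the bilinear operation $\circ$ on Hochschild cochains sends homogeneous pieces to homogeneous pieces with degrees adding up, and then to read off the stated formula. First I would recall that any $f\in C^m(A)$ and $g\in C^n(A)$ decompose as finite sums of homogeneous components $f=\sum_i f_i$, $g=\sum_j g_j$ (with $f_i$ of degree $R_i$ and $g_j$ of degree $S_j$) because $A=k[\Lambda]$ is $M$-graded and the cochain modules inherit this grading (cf.\ Subsection~\ref{subsec 41}, where $C^m(A)=\oplus_R C^{m,-R}(A)$). Since $\circ$ is $k$-bilinear, it suffices to compute $f_i\circ g_j$ for homogeneous $f_i,g_j$; bilinearity then reassembles everything and the bracket formula $[f,g]=f\circ g-(-1)^{(m+1)(n+1)}g\circ f$ distributes over the sums, yielding the displayed double sum.

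The core computation is to plug the homogeneous Ansätze $f_i(x^{\lam_1},\dots,x^{\lam_m})=f_i(\lam_1,\dots,\lam_m)x^{-R_i+\lam_1+\cdots+\lam_m}$ and $g_j(x^{\lam_1},\dots,x^{\lam_n})=g_j(\lam_1,\dots,\lam_n)x^{-S_j+\lam_1+\cdots+\lam_n}$ into the definition
$$(f_i\circ g_j)(x^{\lam_1},\dots,x^{\lam_{m+n-1}})=\sum_{u=1}^m(-1)^{(u-1)(n+1)}f_i\big(x^{\lam_1},\dots,x^{\lam_{u-1}},g_j(x^{\lam_u},\dots,x^{\lam_{u+n-1}}),x^{\lam_{u+n}},\dots,x^{\lam_{m+n-1}}\big).$$
In the $u$-th summand, $g_j(x^{\lam_u},\dots,x^{\lam_{u+n-1}})$ equals $g_j(\lam_u,\dots,\lam_{u+n-1})\,x^{-S_j+\lam_u+\cdots+\lam_{u+n-1}}$, a scalar multiple of the monomial $x^{\mu_u}$ with $\mu_u:=-S_j+\lam_u+\cdots+\lam_{u+n-1}$. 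Feeding $x^{\mu_u}$ into the $u$-th slot of $f_i$ and using homogeneity of $f_i$, the output monomial is $x^{-R_i+\lam_1+\cdots+\lam_{u-1}+\mu_u+\lam_{u+n}+\cdots+\lam_{m+n-1}}=x^{-R_i-S_j+\lam_1+\cdots+\lam_{m+n-1}}$, which is independent of $u$. Thus every summand carries the same monomial $x^{-R_i-S_j+\lam_1+\cdots+\lam_{m+n-1}}$, and its scalar coefficient is $(-1)^{(u-1)(n+1)}f_i(\lam_1,\dots,\lam_{u-1},\mu_u,\lam_{u+n},\dots,\lam_{m+n-1})\,g_j(\lam_u,\dots,\lam_{u+n-1})$; summing over $u$ gives exactly the claimed expression for $(f_i\circ g_j)(\lam_1,\dots,\lam_{m+n-1})$ with $k^n_u=(-1)^{(u-1)(n+1)}$. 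One must also check the bookkeeping that $f_i\circ g_j$ lies in $C^{m+n-1}(\Lambda,\Lambda\setminus(R_i+S_j+\Lambda);k)$: the well-definedness condition from the definition of $C^{n,R}$ forces the coefficient to vanish whenever the exit monomial's exponent fails to lie in $\Lambda$, and the exponent $-R_i-S_j+\lam_1+\cdots+\lam_{m+n-1}$ being non-negative is precisely the statement that the relevant point lies in $(R_i+S_j)+\Lambda$, so $f_i\circ g_j$ vanishes on $S_{m+n-1}(\Lambda\setminus(R_i+S_j+\Lambda))$.

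I do not expect any serious obstacle here: the statement is essentially a bookkeeping lemma, and the only care needed is in (i) tracking the signs $(-1)^{(u-1)(n+1)}$ through the definition of $f\circ g$ so that they match $k^n_u$ verbatim, and (ii) confirming the degree (= monomial exponent) is the same for all $u$, which is what makes the homogeneous pieces close up. The mild subtlety worth a sentence in the write-up is that the inner composition $g_j(x^{\lam_u},\dots,x^{\lam_{u+n-1}})$ may be a scalar times a monomial $x^{\mu_u}$ with $\mu_u\notin\Lambda$; in that case the scalar $g_j(\lam_u,\dots,\lam_{u+n-1})$ is already $0$ by definition of $C^n(\Lambda,\Lambda\setminus(S_j+\Lambda);k)$, so the term drops out consistently on both sides, and similarly for the outer application of $f_i$. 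Once these points are dispatched, the formula for $[f_i,g_j]$ follows by antisymmetrizing with the Koszul sign $(-1)^{(m+1)(n+1)}$, and bilinearity upgrades it to the full statement for $[f,g]$.
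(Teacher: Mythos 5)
Your proposal is correct and follows the same route as the paper: the paper's proof is just the one-line remark that the lemma follows from the isomorphism $f\mapsto f_0$ of Lemma \ref{25}, and your computation is precisely the expanded bookkeeping behind that remark (homogeneous components compose with degrees adding, the output monomial $x^{-R_i-S_j+\lam_1+\cdots+\lam_{m+n-1}}$ is independent of $u$, and the vanishing conditions match up). Nothing further is needed.
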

\begin{proof}
It follows from the isomorphism in Lemma \ref{25}. 
\end{proof}

\begin{proposition}\label{toric poisson}
Every Poisson structure $p$ on an affine toric variety $\text{Spec}(k[\Lambda])$ is of the form 
\begin{equation}\label{eq:poisson}
p(x^{\lam_1},x^{\lam_2})=\sum_{i=0}^df_i(\lam_1,\lam_2)x^{R_i+\lam_1+\lam_2},
\end{equation}
where $f_i\in \bar{C}^2_{(2)}(\Lambda,\Lambda\setminus (-R_i+\Lambda);k)$, $R_i\in M$. We call $f_i(\lam_1,\lam_2)x^{R_i+\lam_1+\lam_2}$ the Poisson structure of degree $R_i$ and we call $p$ a Poisson structure of index $(R_0,...,R_d)$.
\end{proposition}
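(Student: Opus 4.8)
The plan is to reduce everything to the observation that a Poisson structure on $A=k[\Lambda]$ is an alternating biderivation, and then to read off the asserted shape from the $M$-grading together with the finiteness of the semigroup generators.

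First I would record the one substantive input: a Poisson structure $p$ on $\spec(k[\Lambda])$ is an alternating biderivation of $A$, i.e.\ $p$ is skew and $p(ab,c)=a\,p(b,c)+b\,p(a,c)$ for all $a,b,c\in A$. Indeed $p$ is a skew-symmetric Hochschild $2$-cocycle, and since $s_1=0$ while $2^2-2\neq 0$ we have $C^1_{(2)}(A)=0$, so $H^2_{(2)}(A)=\ker\!\bigl(d\colon C^2_{(2)}(A)\to C^3_{(2)}(A)\bigr)$ is exactly the space of such cocycles; on the other hand Proposition \ref{hodge iso} together with $H_0(\wedge^2\LL_{A|k})=\Omega^2_{A|k}$ identifies this space with $\h_A(\Omega^2_{A|k},A)$, which is precisely the module of alternating biderivations. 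This is the only place where the characteristic-zero assumption and the Hodge machinery of Sections 2--3 are used; after it, the argument is bookkeeping with the lattice grading.

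Next I would decompose $p=\sum_{R\in M}p_R$ along the $M$-grading, which the Hochschild differential respects (Subsection \ref{subsec 41}). Each $p_R$ is again a skew-symmetric cocycle, hence again an alternating biderivation, and is homogeneous of degree $R$, so $p_R(x^{\lambda_1}\otimes x^{\lambda_2})=f_R(\lambda_1,\lambda_2)\,x^{R+\lambda_1+\lambda_2}$ for a function $f_R$. Finiteness of this sum is where finite generation enters: a biderivation is determined by its finitely many values $p(x^e\otimes x^{e'})$ on the minimal generating set $E$ of $\Lambda$, and each such value is a finite $k$-linear combination of monomials; consequently $p_R\neq 0$ for at most finitely many $R$, say $R_0,\dots,R_d$, and $p=\sum_{i=0}^{d}p_{R_i}$.

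Finally I would translate the biderivation property of each $p_{R_i}$ into multi-additivity of $f_i:=f_{R_i}$. Applying the Leibniz rule in the second argument to $p_{R_i}(x^{\lambda_1}\otimes x^{\mu}x^{\nu})$ and comparing coefficients of $x^{R_i+\lambda_1+\mu+\nu}$ gives $f_i(\lambda_1,\mu+\nu)=f_i(\lambda_1,\mu)+f_i(\lambda_1,\nu)$ whenever that monomial is nonzero; when $R_i+\lambda_1+\mu+\nu\notin\Lambda$ this reads $0=0+0$, because $\Lambda$ is a semigroup and $f_i$ has the prescribed support, so each of the three coefficients vanishes. Skew-symmetry of $p_{R_i}$ then gives additivity in the first argument as well and also places $f_i$ in the $(2)$-component. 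By Lemma \ref{25} this is exactly the statement $f_i\in\bar C^2_{(2)}(\Lambda,\Lambda\setminus(-R_i+\Lambda);k)$, which is the claimed presentation \eqref{eq:poisson}. The only genuinely delicate step is the first one; thereafter the main thing to watch is that the support conditions built into $C^{2,-R}(A)$ and into Definition \ref{def multi-ad} agree under the additive closure of $\Lambda$.
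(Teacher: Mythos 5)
Your argument is correct and arrives at the stated form, but it takes a genuinely different route from the paper's. The paper's proof is essentially a two-line citation: it invokes Proposition \ref{prop 1} to identify the degree-$R$ part of $H^2_{(2)}(k[\Lambda])$ with $H^2_{(2)}(\Lambda,\Lambda\setminus(-R+\Lambda);k)$, and then Proposition \ref{multi additive} --- the shuffle/symmetrization computation showing that in top Hodge weight there are no coboundaries and the cocycles are exactly the multi-additive functions --- to conclude; the finiteness of the sum is left implicit in the direct-sum decomposition over degrees. You instead characterize $H^2_{(2)}(A)$ as the alternating biderivations and extract bi-additivity of the exponent functions from the Leibniz rule on monomials; for $n=2$ this is the paper's Proposition \ref{multi additive} in algebraic rather than combinatorial clothing (symmetrizing the cocycle identity is precisely how one shows a skew $2$-cocycle is a biderivation), and your explicit finiteness argument via the finite generating set $E$ is more careful than the paper on that point. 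The one loose joint is your first step: the abstract isomorphism $H^2_{(2)}(A)\cong\h_A(\Omega^2_{A|k},A)$ from Proposition \ref{hodge iso} identifies two vector spaces, but what your later steps actually use is the stronger, pointwise statement that every skew-symmetric Hochschild $2$-cocycle literally satisfies the Leibniz rule; an abstract isomorphism onto a subspace of an infinite-dimensional space does not by itself force that subspace to be everything. The stronger statement is nevertheless true and easy to check directly --- adding the cocycle identity $df(a,b,c)=0$ to its image under $a\leftrightarrow b$ and eliminating the mixed terms using the instance $df(a,c,b)=0$ gives $2f(ab,c)=2\bigl(af(b,c)+bf(a,c)\bigr)$ --- so you should replace the appeal to Proposition \ref{hodge iso} by this one-line verification, after which your proof is complete.
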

\begin{proof}
A Poisson structure $p$ is an element of $H^2_{(2)}(k[\Lambda])$ such that $e_3(3)[p,p]=0$.
From Proposition \ref{prop 1} and \ref{multi additive} we know that $$H^{n,R}_{(n)}(k[\Lambda])=H^n_{(n)}(\Lambda,\Lambda\setminus (-R+\Lambda);k)=\bar{C}^n_{(n)}(\Lambda,\Lambda\setminus (-R+\Lambda);k),$$  
thus $p$ is of the form \eqref{eq:poisson}, and $e_3(3)[p,p]=0$ gives us additional restrictions on $f_i$, $i=0,..,d$.
\end{proof}

\begin{example}
For every hypersurface given by the polynomial $g(x,y,z)$ in $\AA^3$, we can define a Poisson structure $\pi_g$ on the quotient $k[x,y,z]/g$, namely:
$$\pi_g:=\partial_x(g)\partial_y\wedge \partial_z+\partial_y(g)\partial_z\wedge\partial_x+\partial_z(g)\partial_x\wedge \partial_y,$$
i.e., we contract the differential 1-form $dg$ to $\partial_x\wedge \partial_y\wedge \partial_z$.
Consider the toric surface $A_n$ given by $g(x,y,z)=xy-z^{n+1}$. We would like to express $\pi_g$ in the form \eqref{eq:poisson}. We see that it holds $\pi_g(x,y)=-(n+1)z^n$, $\pi_g(z,x)=x$ and $\pi_g(y,z)=y$. 
In this case $\Lambda$ is generated by $S_1:=(0,1)$, $S_2:=(1,1)$ and $S_3:=(n+1,n)$, with the relation $S_1+S_3=(n+1)S_2$. We would like to find $p$ of the form \eqref{eq:poisson}, such that $p=\pi_g$. 
With a simple computation, we see that $p$ is of degree $-S_2$:
$$p(x^{\lam_1},x^{\lam_2})=f_0(\lam_1,\lam_2)x^{-S_2+\lam_1+\lam_2},$$
where $f_0(S_1,S_3)=-(n+1)$. The function $f_0$ is with this completely determined by skew-symmetry and bi-additivity. 
\end{example}

Let us now briefly recall the GIT quotient construction $\AA^n/\!\!/G$ of an affine toric variety (see e.g.\ \cite[Chapter 5]{cox}).  
Let $X$ be an affine toric variety without torus factors, i.e., given by the full-dimensional cone $\sigma=\lan a_1,...,a_N\ran\subset N_{\RR}$.
We have a short exact sequence
$$0\to M\xrightarrow{g} \ZZ^{\sigma(1)}\to \text{Cl}(X)\to 0,$$
where $\text{Cl}(X)$ is the class group of $X$, $\sigma(1)=N$ is the number of ray generators and $g$ is an injection map $g(R)=\lan R,a_1 \ran e_1+\cdots +\lan R,a_N \ran e_N$, where $e_j$, $j=1,...,N$ is the standard basis for $\ZZ^N$.
We have $X=\AA^n/\!\!/G$, where $G=\h_{\ZZ}(\text{Cl}(X),k^*)$. 
\begin{remark}
In the above GIT quotient construction we need the assumption that $k$ is algebraically closed. Moreover, the construction can be generalized to semi-projective toric varieties, if we take the GIT quotient of $\AA^n\setminus Z$ for some exceptional set $Z$, which is $\emptyset$ in the case of affine toric varieties.
\end{remark}

The map $g$ induce a semi-group isomorphism between $\Lambda\subset M$ and its image $\Lambda^G:=g(\Lambda)$.
This map determines the isomorphism map of $k$-algebras $$G':k[\Lambda]\to k[x_1,...,x_N]^G,$$ with $G'(x^R)=x^{g(R)}:=x_1^{\lan R,a_1 \ran}\cdots x_N^{\lan R,a_N \ran}$. Elements that lie in $\Lambda^G$  are \emph{$G$-invariant elements}. Thus we have $X=\text{Spec}(k[x_1,...,x_N]/\!\!/G)=\text{Spec}(k[x_1,...,x_N]^G)$.

\begin{proposition}\label{prop 444}
For $\lam,R\in M$ it holds that $$\lam\in \cup_{j\in I}K^R_{a_j} \text{ if and only if } g(\lam)\in \cup_{j\in I}K^{g(R)}_{e_j},$$ 

where $I=\{1,...,N\}$ and $K^{g(R)}_{e_j}$ are the convex sets \eqref{eq con set} of the cone $\lan e_1,...,e_N\ran\subset \RR^N$. %where $e_j$ are the unit vectors with $1$ on the $j$-th term. 
\end{proposition}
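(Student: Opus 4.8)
The plan is to unwind the definition of the convex sets $K^R_\tau$ from \eqref{eq con set} in both $M$ and $\ZZ^N$, and to show that the defining linear inequalities match up under the map $g$. Recall from Subsection \ref{subsec42} that for a ray generator $a_j$ of $\sigma$ we have $K^R_{a_j}=\{r\in\Lambda~|~\lan a_j,r\ran<\lan a_j,R\ran\}$, and analogously, for the cone $\lan e_1,\dots,e_N\ran\subset\RR^N$ with dual cone the positive orthant, the $j$-th ray generator is $e_j$ and $K^{g(R)}_{e_j}=\{r'\in\Lambda^G~|~\lan e_j,r'\ran<\lan e_j,g(R)\ran\}$, where here $\lan\cdot,\cdot\ran$ is the standard pairing on $\RR^N$ reading off the $j$-th coordinate.

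First I would record the key numerical identity: by the very definition of $g$ in the GIT construction, $g(\lam)=\lan\lam,a_1\ran e_1+\cdots+\lan\lam,a_N\ran e_N$, so the $j$-th coordinate of $g(\lam)$ is exactly $\lan e_j,g(\lam)\ran=\lan a_j,\lam\ran$, and likewise $\lan e_j,g(R)\ran=\lan a_j,R\ran$. Hence for each fixed $j$ the strict inequality $\lan a_j,\lam\ran<\lan a_j,R\ran$ holds in $M$ if and only if $\lan e_j,g(\lam)\ran<\lan e_j,g(R)\ran$ holds in $\ZZ^N$; equivalently $\lam\in K^R_{a_j}\iff g(\lam)\in K^{g(R)}_{e_j}$, provided we also know that $\lam\in\Lambda\iff g(\lam)\in\Lambda^G$, which is exactly the semi-group isomorphism $\Lambda\cong\Lambda^G=g(\Lambda)$ recalled just before the statement. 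Taking the union over $j\in I$ of this coordinate-wise equivalence immediately yields $\lam\in\bigcup_{j\in I}K^R_{a_j}\iff g(\lam)\in\bigcup_{j\in I}K^{g(R)}_{e_j}$.

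Thus the proof is essentially a bookkeeping argument: the one substantive point to be careful about is that $g(\lam)$ genuinely lies in the lattice $\ZZ^N$ (so that asking whether it lies in $\Lambda^G$ makes sense) and that membership in $\Lambda^G$ is controlled precisely by the same $N$ inequalities $\lan e_j,\cdot\ran\ge 0$ that define the positive orthant as the dual cone of $\lan e_1,\dots,e_N\ran$ — this is where normality/saturation of $\Lambda$, hence of $\Lambda^G$, is used, so that $\Lambda^G=\lan e_1,\dots,e_N\ran^\vee\cap\ZZ^N$ really is the full set of lattice points with nonnegative coordinates lying in $g(M)$. I expect no genuine obstacle here; the only mild subtlety is keeping the two pairings (the $M$–$N$ pairing and the standard pairing on $\RR^N$) notationally distinct and checking that $g$ intertwines them in the sense made precise above.
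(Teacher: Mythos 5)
Your argument is correct and coincides with the paper's own proof: both rest on the coordinate identity $\lan g(\lam),e_j\ran=\lan\lam,a_j\ran$ (and likewise for $R$), from which the defining strict inequalities for $K^R_{a_j}$ and $K^{g(R)}_{e_j}$ match term by term and the equivalence of the unions follows. The extra remarks about $\Lambda^G=g(\Lambda)$ and saturation are harmless bookkeeping that the paper leaves implicit.
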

\begin{proof}
By the definition of $g$ we know that $\lan g(\lam),e_j \ran=\lan \lam,a_j \ran$ and 
$\lan g(R),e_j \ran=\lan R,a_j \ran$. For
$g(\lam)\in \cup_jK^{g(R)}_{e_j}$ there exists $j$ such that $\lan g(\lam),e_j \ran< \lan g(R),e_j  \ran$ which means that there exists $j$ such that $\lan \lam,a_j \ran< \lan R,a_j  \ran$, which is equivalent to $\lam \in \cup_jK^{R}_{a_j}$. 
\end{proof}

Let $A=k[\sigma^{\vee}\cap M]$ and $X=\spec(A)$ be a toric variety without torus factors. Let  $T_k=\spec(k[\ZZ^k])$ and $A_k=k[\Lambda \times \ZZ^k]$ ($A_0\cong A$). Every affine toric variety is of the form $X_k=\spec(A_k)=X\times T_k$.
Let $Y_k=\AA^N\times T_k=\spec(B_k)$, where $B_k=k[\NN_0^N\times \ZZ^k]$ and $\NN_0$ is the set of natural numbers with $0$. We define lattices $\widetilde{M}:=M\times \ZZ^k$, $\widetilde{N}:=N\times \ZZ^k$
and a map $g':\Lambda\times \ZZ^k\to \NN^N_0\times \ZZ^k$ with 
$$g'(\lam,\mu)=(g(\lam),\mu).$$

\begin{definition}
Let $(V,\{\cdot,\cdot\})$ be an  affine Poisson variety and let $p:V\to W$ be a dominant map, where $W$ is an affine variety. If there exists a Poisson structure $\{\cdot,\cdot\}_W$ on $W$, such that for every $x\in V$,
$$\{F,G\}_W(p(x))=\{\bar{F},\bar{G}\}(x),$$
for all $F,G\in \cO(W)$ and for all extensions $\bar{F},\bar{G}$ of $F\circ p$ and $G\circ p$, we call $\{\cdot,\cdot\}_W$ \emph{a reduced Poisson structure}.
\end{definition}

\begin{proposition}\label{reduced pois}
Every Poisson structure $p$ on $X_k$ can be seen as a reduced Poisson structure $P$ on $Y_k$. 
\end{proposition}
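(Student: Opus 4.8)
The plan is to lift the Poisson bracket $p$ on $X_k = \spec(A_k)$ through the $G$-invariant embedding $A_k \cong B_k^G \hookrightarrow B_k$ and then extend it $B_k$-linearly. Recall from Proposition \ref{toric poisson} (applied to the toric variety $X_k$, which carries the lattice $\widetilde M = M \times \ZZ^k$) that $p$ decomposes as a finite sum $p = \sum_{i=0}^d f_i(\lam_1,\lam_2) x^{R_i+\lam_1+\lam_2}$ with each $f_i \in \bar C^2_{(2)}(\Lambda\times\ZZ^k, (\Lambda\times\ZZ^k)\setminus(-R_i+\Lambda\times\ZZ^k);k)$. First I would transport each homogeneous summand through the semigroup isomorphism $g'$: since $g'$ identifies $\Lambda\times\ZZ^k$ with its image $\Lambda^G\times\ZZ^k$ and $G'(x^R)=x^{g'(R)}$, the function $f_i$ — being defined purely in terms of the lattice and its partial order — induces a bi-additive skew-symmetric function $\widetilde f_i$ on $S_2(\NN_0^N\times\ZZ^k)$ via the $k$-linear embedding $\widetilde M_k = \widetilde N_k^\vee \hookrightarrow (\ZZ^N\times\ZZ^k)\otimes k$ supplied by $g'$; here Proposition \ref{prop 444} (in its evident $g'$-version) guarantees that the well-definedness condition "$\widetilde f_i$ vanishes when the sum of entries is not $\geq 0$" is compatible with the corresponding condition for $f_i$. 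I then set $P(x^{\mu_1},x^{\mu_2}) := \sum_{i=0}^d \widetilde f_i(\mu_1,\mu_2) x^{g'(R_i)+\mu_1+\mu_2}$ for $\mu_1,\mu_2 \in \NN_0^N\times\ZZ^k$, extended $k$-bilinearly (equivalently by Leibniz once it is a biderivation) to all of $B_k$.

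The key steps, in order: (1) verify $P$ is a well-defined element of $C^2(B_k)$, i.e. each $\widetilde f_i$ is supported where the exponent $g'(R_i)+\mu_1+\mu_2$ lies in $\NN_0^N\times\ZZ^k$; (2) verify $P$ is skew-symmetric and bi-additive, hence a Poisson $2$-cochain of pure Hodge type $(2)$ — this is immediate from the corresponding properties of the $\widetilde f_i$; (3) verify the Jacobi identity $e_3(3)[P,P]=0$; (4) verify that $P$ restricts to $p$ on the $G$-invariant subalgebra, i.e. $P(G'(x^{\lam_1}),G'(x^{\lam_2})) = G'(p(x^{\lam_1},x^{\lam_2}))$ for $\lam_1,\lam_2\in\Lambda\times\ZZ^k$, which holds by construction since $G'(x^R)=x^{g'(R)}$ and $\widetilde f_i \circ (g'\times g') = f_i$. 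For step (3) the clean route is to use the multidegree-homogeneity of the Gerstenhaber bracket from Lemma \ref{lem ger pr}: the bracket $[P,P]$ decomposes over the degrees $g'(R_i)+g'(R_j)$, and on each such graded piece the defining data $[\widetilde f_i,\widetilde f_j]$ is computed by exactly the same lattice-combinatorial formula as $[f_i,f_j]$ under the embedding $g'$; since $g'$ is injective, the vanishing of $e_3(3)[p,p]$ — which by Proposition \ref{prop 1} and \ref{multi additive} is a system of linear identities among the $f_i$ indexed by lattice triples — transports verbatim to the vanishing of $e_3(3)[P,P]$. In other words, $g'$ induces an injection of the relevant Harrison-type cochain spaces that is compatible with both the composition product and the shuffle idempotents.

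The main obstacle is step (3): one must be careful that the Jacobi identity for $P$ on \emph{all} of $B_k$ really does follow from the Jacobi identity for $p$, and not merely on the $G$-invariant part. The point is that $P$, being defined by formulas that are additive in the lattice arguments and homogeneous of fixed degrees $g'(R_i)$, is determined by finitely many scalar values $\widetilde f_i(e_{k_1},e_{k_2})$ on basis vectors (cf. Remark \ref{poisson remark}), and the Jacobi expression $e_3(3)[P,P]$ evaluated on any triple of monomials is a $k$-linear combination of products of these scalars — the \emph{same} universal combination, with the \emph{same} coefficients, that appears for $p$ after applying $g'$. Since $g'$ is an injective map of free abelian groups, a relation among the $f_i$-values forced on the sub-semigroup $\Lambda\times\ZZ^k$ is the restriction of the relation among the $\widetilde f_i$-values on $\NN_0^N\times\ZZ^k$, and conversely the latter is forced by additivity from its values on a basis, which already lie in $\Lambda^G\times\ZZ^k$ up to the span. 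Making this "the values on a $k$-basis already come from $\Lambda$" argument precise — using that $\Lambda\times\ZZ^k$ spans $\widetilde M_k$ and hence $\Lambda^G\times\ZZ^k$ spans $\widetilde M_k$ inside $(\ZZ^N\times\ZZ^k)\otimes k$ — is the one place where genuine attention is required; everything else is bookkeeping with Lemma \ref{lem ger pr} and Proposition \ref{prop 444}.
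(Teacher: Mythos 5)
Your overall strategy---transport the homogeneous components $f_i$ through $g'$, define $P$ on $Y_k$ by the analogous formula, and then check well-definedness, Hodge type, the Jacobi identity, and the restriction property---is the same as the paper's. The gap sits exactly at the point you flag, and your proposed resolution of it does not work. The sublattice $g'(\widetilde{M})=g'(M\times\ZZ^k)$ has rank $n+k$ inside $\ZZ^N\times\ZZ^k$, which has rank $N+k$; the codimension is $N-n=\rk(\mathrm{Cl}(X))$, which is positive in essentially every singular example. So $\Lambda^G\times\ZZ^k$ spans only the proper subspace $g'(\widetilde{M})\otimes k$ of $(\ZZ^N\times\ZZ^k)\otimes k$, not the whole space, and a bi-additive function on $\NN_0^N\times\ZZ^k$ is \emph{not} determined by its values on pairs from $\Lambda^G\times\ZZ^k$. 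Consequently ``$\widetilde f_i$ induced via the $k$-linear embedding'' is not yet a definition: you must choose the values of $\widetilde f_i$ on basis vectors completing $g'(\widetilde{M})\otimes k$ to the ambient space, and the Jacobi identity for $P$ genuinely depends on that choice. For triples of monomials outside the $G$-invariant subalgebra, $e_3(3)[P,P]$ involves scalars of the form $\widetilde f_i(t,\cdot)$ with $t$ in the complement, which have no counterpart on $X_k$ and are not controlled by $e_3(3)[p,p]=0$; your claim that the same ``universal combination'' of scalars appears is only true on the image of $g'$.

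The paper closes this gap by an explicit choice of extension: it picks $t_1,\dots,t_{N-n}\in\NN_0^N$ completing the images of the chosen $s_j\in\Lambda\times\ZZ^k$ to an $\RR$-basis and \emph{decrees} $F_i(t_j,t_l)=0$ and $F_i(s_j,t_l)=0$. Then $e_3(3)[P,P]$ is a cocycle of pure Hodge type, hence multi-additive by Proposition \ref{multi additive}, hence determined by its values on triples of basis vectors. Each Gerstenhaber term contains a factor of the form $F_i(\cdot,\lam)$ or $F_j(\lam,\cdot)$ for every input $\lam$ of the triple, so any triple containing some $t_j$ contributes zero by the extension-by-zero, while triples drawn from $g'(\Lambda\times\ZZ^k)$ vanish because $e_3(3)[p,p]=0$ via Lemma \ref{lem ger pr}. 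That specific choice (and the observation that it is compatible with the vanishing constraints of Proposition \ref{prop 444}) is the actual content of the proof; without it, your step (3) does not go through.
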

\begin{proof}
From Proposition \ref{toric poisson} we know that every Poisson structure on $X_k$ is of the form
$$p(x^{\lam_1},x^{\lam_2})=\sum_{i=0}^df_{i}(\lam_1,\lam_2)x^{R_i+\lam_1+\lam_2},$$
 where $f_{i}\in \bar{C}^2_{(2)}(\Lambda\times \ZZ^k,(\Lambda\times \ZZ^k)\setminus (-R_i+(\Lambda\times \ZZ^k));k)$, $R_i\in \widetilde{M}$.

Now we construct a Poisson structure $P$ on a smooth affine variety $Y_k$:
$$P(x^{\lam},x^{\mu})=\sum_{i=0}^dF_{i}(\lam,\mu)x^{g'(R_i)+\lam+\mu},$$
where $F_{i}$ has the property that $F_{i}(g'(\lam_1),g'(\lam_2))=f_{i}(\lam_1,\lam_2)$, for each $i$.

STEP 1: 
Functions $F_{i}$ with the above property exist for each $i$: 

We choose $k+n$ linearly independent vectors $s_1,...,s_{k+n}\in \Lambda\times \ZZ^k$ such that $s_1,...,s_k\in 0\times \ZZ^k$ and $s_{k+1},...,s_{k+n}\in \Lambda\times 0$. 
Note also that $f_{i}$ are completely determined by the values $f_{i}(s_j,s_l)$, for $1\leq j<l \leq k+n$ by Remark \ref{poisson remark}. Since $g'$ is injective we can choose $F_{i}\in \bar{C}_{(2)}^2(\NN^N_{0}\times \ZZ^k;k)$, such that $F_{i}(g'(s_j),g'(s_l))=f_{i}(s_j,s_l)$, for $1\leq j<l \leq k+n$.

Let $t_{1},...,t_{N-n}\in \NN_0^N$ be chosen such that $s_{k+1},...,s_{k+n},t_1,...,t_{N-n}$ determine $\RR$-basis of $\RR^N$.
We choose $F_{i}$ such that $F_{i}(t_j,t_l)=0$ for $1\leq j,l\leq N-n$ and $F_{i}(s_j,t_l)=0$ for $j=1,...,k+n$ and $l=1,...,N-n$ (this will be important to prove the Jacobi identity for $P$ in Step 3). We easily see that it holds $F_{i}(g'(\lam_1),g'(\lam_2))=f_{i}(\lam_1,\lam_2)$.

STEP 2: $P$ is well defined:

That $P(x^{\lam_1},x^{\lam_2})$ is well defined it must for each $i$ hold that $F_{i}(\lam,\mu)=0$ for $g'(R)+\lam+\mu\not\geq 0$. We need to check that this agrees with the property $F_{i}(g'(\lam_1),g'(\lam_2))=f_{i}(\lam_1,\lam_2)$: without loss of generality $\lam_1,\lam_2\in \Lambda\times 0$. We have $F_{i}(g(\lam_1),g(\lam_2))=0$ for $g(R)+g(\lam_1)+g(\lam_2)\not \geq 0$ or equivalently for $g(\lam_1+\lam_2)\in \NN^N_0\setminus \NN^N_0(-g(R))=\cup_{j\in I}K^{-g(R)}_{e_j}$, where $I=\{1,...,N\}$. By Proposition \ref{prop 444} this is equivalent to $\lam_1+\lam_2\in \cup_{j\in I}K^{-R}_{a_j}$ and we indeed have $f_i(\lam_1,\lam_2)=0$ for $R+\lam_1+\lam_2\not \geq 0$.

STEP 3: $P$ satisfies the Jacobi identity:

We have $e_3(3)([p,p])(x^{\lam_1},x^{\lam_2},x^{\lam_3})=0$, since $p$ is a Poisson structure.
Using Lemma \ref{lem ger pr} and the equalities $F_{i}(g'(\lam_1),g'(\lam_2))=f_{i}(\lam_1,\lam_2)$ from Step 1, we see that
$e_3(3)([P,P])(x^{g'(\lam_1)},x^{g'(\lam_2)},x^{g'(\lam_3)})=0$.
Since $e_3(3)[P,P]\in H^3_{(3)}(Y_k)$ we can use Proposition \ref{multi additive} and thus from the construction of $F_{i}$ in Step 1 ($F_{i}(t_j,t_l)=0$ and $F_{i}(s_j,t_l)=0$) we immediately see that $e_3(3)[P,P]=0$.  Thus the Jacobi identity is satisfied.
\end{proof}

Let $\mathfrak{g}$ denote the differential graded Lie algebra $\big(\hslash C^{\kbb}(A_k)[1]\big)[[\hslash]]$ and let $\mathfrak{h}$ denote the differential graded Lie algebra $\big(\hslash C^{\kbb}(B_k)[1]\big)[[\hslash]]$.

\begin{proposition}\label{cor poiss}
Let $\gamma(x^{\lam_1},x^{\lam_2}):=\sum_{m\geq 1}\hslash^m\gamma_{m}(x^{\lam_1},x^{\lam_2})\in \mathfrak{h}^1$ be an MC element of a dgla $\mathfrak{h}$, where $\gamma_1$ is a Poisson structure on $Y_k$ of index $(g'(R_0),...,g'(R_d))$.
 Then $\gamma$ induces an MC element $\widetilde{\gamma}(x^{\lam_1},x^{\lam_2}):=\sum_{m\geq 1}\hslash^m\widetilde{\gamma}_{m}(x^{\lam_1},x^{\lam_2})\in \mathfrak{g}^1$ of the dgla $\mathfrak{g}$, where  $\widetilde{\gamma}_1$ is a reduced Poisson structure on $X_k$ of index $(R_0,...,R_d)$. 
\end{proposition}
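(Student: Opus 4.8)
The plan is to exploit the explicit $G$-invariance that is built into the passage from $Y_k=\spec(B_k)$ to $X_k=\spec(A_k)$. Recall from Proposition \ref{reduced pois} and its proof that the map $g':\Lambda\times\ZZ^k\to\NN_0^N\times\ZZ^k$ identifies $A_k=k[\Lambda\times\ZZ^k]$ with the ring $B_k^G$ of $G$-invariants inside $B_k$, where $G=\h_\ZZ(\mathrm{Cl}(X),k^*)$ acts on $B_k$ diagonally. The key structural point I would isolate first is that the Hochschild complex $C^{\kbb}(B_k)$ carries a $G$-action (induced by the $G$-action on $B_k$), and taking $G$-invariants is exact (since $G$ is linearly reductive in characteristic $0$, or more concretely since the lattice grading splits everything into $G$-isotypic pieces indexed by $\mathrm{Cl}(X)$). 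Hence $C^{\kbb}(B_k)^G$ is a sub-dgla, and I claim there is a natural restriction/averaging map relating it to $C^{\kbb}(A_k)$. Concretely, a cochain $\gamma_m\in C^m(B_k)$ of the special form appearing in an MC element coming from a Poisson structure of index $(g'(R_0),\dots,g'(R_d))$ will be $G$-equivariant of the appropriate weights, and restricting its arguments to monomials $x^{g'(\lam)}$ with $\lam\in\Lambda\times\ZZ^k$ produces a cochain $\widetilde\gamma_m$ on $A_k$ via the isomorphism $G':k[\Lambda\times\ZZ^k]\xrightarrow{\sim}B_k^G$.

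The steps I would carry out, in order, are: (1) show that the condition ``$\gamma_1$ is a Poisson structure of index $(g'(R_0),\dots,g'(R_d))$'' forces every $\gamma_m$ in the MC element $\gamma$ to be $M\times\ZZ^k$-homogeneous with degrees in the sublattice $g'(\widetilde M)$ — this is a degree-counting argument using Lemma \ref{lem ger pr}: the Gerstenhaber bracket adds degrees, and the obstruction equation $d\gamma+\tfrac12[\gamma,\gamma]=0$ solved degree by degree keeps all $\gamma_m$ inside the $G$-invariant part (one may always choose the $\gamma_m$ this way, discarding the non-invariant components, because $C^{\kbb}(B_k)^G$ is a direct summand sub-dgla and the MC equation restricted there still has $\gamma_1$ as its leading term). (2) Using $G'$ and the semigroup isomorphism $\Lambda\times\ZZ^k\cong\Lambda^G\times\ZZ^k$, transport each $\gamma_m$ to a cochain $\widetilde\gamma_m\in C^m(A_k)$; set $\widetilde\gamma=\sum_{m\ge1}\hslash^m\widetilde\gamma_m\in\mathfrak g^1$. (3) Verify $\widetilde\gamma$ is an MC element: the transport is a morphism of dglas $C^{\kbb}(B_k)^G\to C^{\kbb}(A_k)$ (it respects the Hochschild differential and, by Lemma \ref{lem ger pr}, the Gerstenhaber bracket, since both are computed ``coefficientwise'' on the lattice and $g'$ is injective and additive), so it sends MC elements to MC elements. (4) Identify $\widetilde\gamma_1$: by construction $\widetilde\gamma_1(x^{\lam_1},x^{\lam_2})=\sum_i f_i(\lam_1,\lam_2)x^{R_i+\lam_1+\lam_2}$ where $F_i(g'(\lam_1),g'(\lam_2))=f_i(\lam_1,\lam_2)$, which by Step 1 of the proof of Proposition \ref{reduced pois} is exactly the reduced Poisson structure on $X_k$ of index $(R_0,\dots,R_d)$, and skew-symmetry plus the Jacobi identity descend because they hold for $\gamma_1$ and the restriction map is injective on the relevant graded pieces.

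The main obstacle I anticipate is Step 1 — proving that one can arrange all higher $\gamma_m$ to be $G$-invariant, i.e.\ to have lattice degrees in $g'(\widetilde M)\subset\NN_0^N\times\ZZ^k$ rather than in the full lattice. A priori an MC element lifting a given invariant $\gamma_1$ could acquire non-invariant higher-order terms. The resolution is that $C^{\kbb}(B_k)=\bigoplus_{c\in\mathrm{Cl}(X)}C^{\kbb}(B_k)_c$ as a dgla-graded-by-$\mathrm{Cl}(X)$ object, with $C^{\kbb}(B_k)^G=C^{\kbb}(B_k)_0$ a sub-dgla; since $\gamma_1\in C^{\kbb}(B_k)_0$ and the bracket is $\mathrm{Cl}(X)$-graded, the MC equation can be solved entirely within $C^{\kbb}(B_k)_0$, so a fully $G$-invariant lift exists (and one simply takes that $\gamma$, or projects the given one to its degree-$0$ part, noting the projection is a dgla morphism hence preserves MC elements). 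One must also check that the continuity/$\hslash$-adic conditions are preserved, which is automatic since everything is done $\hslash$-degree by $\hslash$-degree. Once Step 1 is in place, Steps 2–4 are formal consequences of Lemma \ref{lem ger pr} and the constructions in the proof of Proposition \ref{reduced pois}.
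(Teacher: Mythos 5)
Your Steps (2)--(4) are essentially the paper's own proof. The paper defines $\widetilde{\gamma}_{0m}(\lam,\mu):=\gamma_{0m}(g(\lam),g(\mu))$, checks well-definedness exactly as in Step 2 of Proposition \ref{reduced pois} (i.e.\ via Proposition \ref{prop 444}), and then observes that restricting the MC equation to $G$-invariant monomials and applying Lemma \ref{lem ger pr} yields the MC equation for $\widetilde{\gamma}$; this is precisely your transport-along-$g'$ map and the verification that it respects differential and bracket.

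The divergence is in your Step (1), and there your argument has a genuine flaw. You correctly isolate the point that the construction needs every $\gamma_m$, not just $\gamma_1$, to have lattice degree in $g'(\widetilde{M})$ --- the paper silently asserts the stronger statement that $\gamma_m$ is homogeneous of degree $mg(R)$. But your proposed fix, projecting $\gamma$ onto the $\mathrm{Cl}(X)$-degree-zero component $C^{\kbb}(B_k)_0=C^{\kbb}(B_k)^G$ and claiming the projection is a dgla morphism, is false: for a Lie algebra graded by a group, projection onto the identity component is not a Lie morphism, since two components of opposite nonzero degrees $c$ and $-c$ bracket into degree $0$, so $\pi_0[\gamma,\gamma]\neq[\pi_0\gamma,\pi_0\gamma]$ in general and $\pi_0\gamma$ need not satisfy the MC equation. (Being a direct summand sub-dgla does not give a dgla retraction.) Your alternative --- re-solving the MC equation inside $C^{\kbb}(B_k)_0$ --- replaces the given $\gamma$ and, more seriously, requires the order-by-order obstructions to vanish in that sub-dgla, which is not automatic ($B_k$ is smooth, so $\HH^3(B_k)$ is large) and is not argued. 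What actually closes this gap in the intended application (Theorem \ref{th def quan}) is a property of the specific MC element being used: Kontsevich's star product of a lattice-homogeneous Poisson bivector on $\AA^N\times T_k$ can be chosen homogeneous, with the $\hslash^m$-term of lattice degree $mg'(R)$, because the formality morphism is equivariant under linear (in particular torus) coordinate changes and its Taylor coefficients are polynomial in the bivector. Either invoke that, or build the homogeneity of the $\gamma_m$ into the hypotheses, as the paper implicitly does; as written, your Step (1) does not establish it.
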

\begin{proof}
We prove it just for $d=0$ and $k=0$ (i.e.\ for $\gamma_1$ of degree $R_0$ on a toric variety $X=X_0$ without torus factors). The rest follows easily, just the notation is more tedious.

We know that $\gamma_m(x^{\lam_1},x^{\lam_2})=\gamma_{0m}(\lam_1,\lam_2)x^{mg(R)+\lam_1+\lam_2}$, where 
$$\gamma_{0m}\in C^2(\NN^N_{0},\NN^N_{0}\setminus \NN^N_{0}(-mg(R));k).$$ 
We define $\widetilde{\gamma}_{0m}(\lam,\mu):=\gamma_{0m}(g(\lam),g(\mu))$ and $\widetilde{\gamma}:=\sum_{m\geq 1}\hslash^m\widetilde{\gamma}_m(x^{\lam},x^{\mu})$, where $\widetilde{\gamma}_m(x^{\lam},x^{\mu})=\widetilde{\gamma}_{0m}(\lam,\mu)x^{mR+\lam+\mu}$.

First we need to check that $\widetilde{\gamma}(x^{\lam},x^{\mu})=\sum_{m\geq 1}\hslash^m\widetilde{\gamma}_{m}(x^{\lam},x^{\mu})$ is well defined, i.e., if $mR+\lam+\mu\not \geq 0$, then $\gamma_{0m}(g(\lam),g(\mu))=0$. This can be done as in Step 2 of Proposition \ref{reduced pois}.

Looking only at $G$-invariant elements (i.e.\ $\lam=g(\lam')$ and $\mu=g(\mu')$ for some $\lam', \mu'\in \Lambda$) in the MC equation for $\gamma$ and using Lemma \ref{lem ger pr}, we see that the MC equation also holds  for $\widetilde{\gamma}$. 
\end{proof}

\begin{theorem}\label{th def quan}
Every Poisson structure $p$ on an affine toric variety can be quantized.
\end{theorem}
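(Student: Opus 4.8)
The plan is to transfer the problem to the smooth ambient variety and back, using the GIT presentation of an affine toric variety. Given a Poisson structure $p$, first I would write the variety as $X_k=X\times T_k$ with $X=\spec(A)$ having no torus factors, so that $A_k=k[\Lambda\times\ZZ^k]$ embeds, via the injective semigroup map $g'\colon\Lambda\times\ZZ^k\hookrightarrow\NN_0^N\times\ZZ^k$ coming from $X=\AA^N/\!\!/G$, into the smooth coordinate ring $B_k=k[\NN_0^N\times\ZZ^k]$ of $Y_k=\AA^N\times T_k$. By Proposition~\ref{toric poisson} the structure $p$ has some index $(R_0,\dots,R_d)$, and by Proposition~\ref{reduced pois} it is realized as a genuine reduced Poisson structure $P$ on the smooth affine variety $Y_k$, of index $(g'(R_0),\dots,g'(R_d))$.

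Next I would quantize upstairs. Since $Y_k$ is smooth affine, Corollary~\ref{smooth def quan} (the formality Theorem~\ref{th defq}) yields a quantization of $P$, which by Lemma~\ref{1par-maurer} is the same datum as a Maurer--Cartan element $\gamma=\sum_{m\ge 1}\hslash^m\gamma_m$ of $\mathfrak{h}=\big(\hslash C^{\kbb}(B_k)[1]\big)[[\hslash]]$ with $\gamma_1=\tfrac12 P$. Here I would take the canonical Kontsevich quantization, so that $\gamma$ inherits the weight decomposition of $P$ under the torus acting on $Y_k$ (the formality $L_\infty$-morphism is assembled from torus-equivariant polydifferential operators); then each $\gamma_m$ lives in weights of the form $g'(R_{i_1})+\dots+g'(R_{i_m})$, so $\gamma$ is an MC element of the kind required by Proposition~\ref{cor poiss}.

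Then I would descend. Proposition~\ref{cor poiss} turns $\gamma$ into an MC element $\widetilde{\gamma}=\sum_{m\ge 1}\hslash^m\widetilde{\gamma}_m$ of $\mathfrak{g}=\big(\hslash C^{\kbb}(A_k)[1]\big)[[\hslash]]$ with $\widetilde{\gamma}_1=\tfrac12 p$, obtained by restricting the Maurer--Cartan equation to the $G$-invariant monomials $x^{g'(\lam)}$ and reindexing (using Lemma~\ref{lem ger pr} for the Gerstenhaber bracket). Applying Lemma~\ref{1par-maurer} in the other direction, $\widetilde{\gamma}$ corresponds to a one-parameter formal deformation $(A_k[[\hslash]],*)$ with $a*b=ab+\tfrac12 p(a\otimes b)\hslash+O(\hslash^2)$, which is exactly a quantization of $p$ in the sense of Definition~\ref{def quant}. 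Since every affine toric variety is of the form $X_k$ for some torus-factor-free $X$, this finishes the argument.

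The main obstacle is the descent step: a priori the star product built on $Y_k$ differentiates in all $N+k$ coordinate directions, and one must check that it preserves the subalgebra $k[\Lambda\times\ZZ^k]\subset k[\NN_0^N\times\ZZ^k]$ of $G$-invariants order by order in $\hslash$. This is precisely where the injectivity of $g'$, the multi-additivity of the homogeneous components of $P$ (Proposition~\ref{toric poisson}), and the special choice of extensions $F_i$ in Proposition~\ref{reduced pois} (vanishing on the complementary lattice directions $t_j$, $s_j$) conspire to make the restricted Maurer--Cartan equation equivalent to the original one. Getting this equivariance and vanishing bookkeeping exactly right, rather than the formal-deformation formalism, is the real content.
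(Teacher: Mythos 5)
Your proposal follows essentially the same route as the paper's proof: Propositions \ref{toric poisson} and \ref{reduced pois} lift $p$ to a reduced Poisson structure $P$ on the smooth ambient variety $Y_k$, Corollary \ref{smooth def quan} together with Lemma \ref{1par-maurer} quantizes $P$ upstairs and converts the quantization into a Maurer--Cartan element, and Proposition \ref{cor poiss} descends it back to $X_k$. Your extra remark that the higher terms $\gamma_m$ must be concentrated in weights $g'(R_{i_1})+\cdots+g'(R_{i_m})$, secured by choosing a torus-equivariant (Kontsevich) quantization, correctly flags a hypothesis that the paper's Proposition \ref{cor poiss} uses implicitly when it writes $\gamma_m(x^{\lam_1},x^{\lam_2})=\gamma_{0m}(\lam_1,\lam_2)x^{mg(R)+\lam_1+\lam_2}$.
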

\begin{proof}
As above let $X_k$ denote an arbitrary affine toric variety.
By Proposition \ref{toric poisson},
$p$ is of the form $p(x^{\lam_1},x^{\lam_2})=\sum_{i=0}^df_{i}(\lam_1,\lam_2)x^{R_i+\lam_1+\lam_2}$ for some $R_i\in \Lambda\times \ZZ^k$. 
By the construction in the proof of Proposition \ref{reduced pois} this Poisson structure can be seen as a reduced Poisson structure of $P$ on $Y_k$: 
$$P(x^{\lam},x^{\mu})=\sum_{i=0}^dF_{i}(\lam,\mu)x^{g'(R_i)+\lam+\mu},$$
where the functions $F_{i}$ have the property that $F_{i}(g'(\lam_1),g'(\lam_2))=f_{i}(\lam_1,\lam_2)$.
Since $P$ is a Poisson structure on a smooth affine variety $Y_k$, we know by Corollary \ref{smooth def quan} that $P$ can be quantized. In other words there exists a one parameter deformation and 
by Lemma \ref{1par-maurer} we know that this correspond to an MC element $\gamma(x^{\lam_1},x^{\lam_2}):=\sum_{m\geq 1}\hslash^m\gamma_{m}(x^{\lam_1},x^{\lam_2})\in \mathfrak{h}^1$, where $\gamma_1$ is of index $(g'(R_0),...,g'(R_d))$. By Proposition \ref{cor poiss} we know that this give us an MC element 
$$\widetilde{\gamma}(x^{\lam_1},x^{\lam_2}):=\sum_{m\geq 1}\hslash^m\widetilde{\gamma}_{m}(x^{\lam_1},x^{\lam_2})\in \mathfrak{g}^1,$$ where  $\widetilde{\gamma}_1$ is a reduced Poisson structure on $X_k$ of index $(R_0,...,R_d)$. By the construction we have $\widetilde{\gamma}_1=p$. Using again Lemma \ref{1par-maurer} we see that $p$ can be quantized.
\end{proof}
\section*{Acknowledgements}
This paper is part of my PhD thesis. I would like to thank to my advisor Klaus Altmann, for his constant support and for providing clear answers to my many questions.
I am also grateful to Victor P. Palamodov, Giangiacomo Sanna and Arne B. Sletsj\o e for useful discussions.

\end{document}